\numberwithin{equation}{section}
\newcommand{\matdev}{\partial^{\bullet}}
\newcommand{\matdevtau}{\partial^{\bullet}_\tau}
\newcommand{\gradg}{\nabla_{\Gamma}}
\newcommand{\gradgh}{\nabla_{\Gamma_h}}
\newcommand{\lapg}{\Delta_{\Gamma}}
\newcommand{\invshn}[1]{\mathcal{G}_{S_h^{#1}}}
\newcommand{\normshn}[2]{\left\| #1 \right\|_{S_h^{#2}}}
\newcommand{\Xint}[1]{\mathchoice
	{\XXint\displaystyle\textstyle{#1}}%
	{\XXint\textstyle\scriptstyle{#1}}%
	{\XXint\scriptstyle\scriptscriptstyle{#1}}%
	{\XXint\scriptscriptstyle\scriptscriptstyle{#1}}%
	\!\int}
\newcommand{\XXint}[3]{{\setbox0=\hbox{$#1{#2#3}{\int}$ }
		\vcenter{\hbox{$#2#3$ }}\kern-.6\wd0}}
\newcommand{\dashint}{\Xint-}
\newcommand{\mval}[2]{\dashint_{#2} #1}
\DeclareMathOperator*{\esssup}{ess\,sup}
\newcommand{\Ech}{E^{\mathrm{CH}}}
\newcommand{\Echh}{E_h^{\mathrm{CH}}}
\theoremstyle{definition}%
\newtheorem{theorem}{Theorem}[section]
\newtheorem{corollary}[theorem]{Corollary}
\newtheorem{proposition}[theorem]{Proposition}
\newtheorem{lemma}[theorem]{Lemma}
\newtheorem{definition}[theorem]{Definition}
\newtheorem{remark}[theorem]{Remark}
\begin{document}
\title[Fully discrete ESFEM for the {Cahn}-{Hilliard} equation with a regular potential]{A fully discrete evolving surface finite element method for the {Cahn}-{Hilliard} equation with a regular potential}
\author{Charles M. Elliott\\
	Thomas Sales}
\address{Mathematics Institute, Zeeman Building, University of Warwick, Coventry CV4 7AL, UK}
\email{\href{mailto:c.m.elliott@warwick.ac.uk}{c.m.elliott@warwick.ac.uk}, \href{mailto:tom.sales@warwick.ac.uk}{tom.sales@warwick.ac.uk}}
\date{}

\begin{abstract}
	We study two fully discrete evolving surface finite element schemes for the Cahn-Hilliard equation on an evolving surface, given a smooth potential with polynomial growth.
	In particular we establish optimal order error bounds for a (fully implicit) backward Euler time-discretisation, and an implicit-explicit time-discretisation, with isoparametric surface finite elements discretising space.
\end{abstract}

\maketitle

\section{Introduction}
In this paper we present optimal order error bounds for fully discrete   evolving surface finite element (ESFEM) schemes for the Cahn-Hilliard equation posed on a closed, orientable evolving surface, $(\Gamma(t))_{t \in [0,T]} \subset \mathbb{R}^3$. 
The evolution of the surface is assumed to be sufficiently smooth, and known a priori.
In this setting, the Cahn-Hilliard equation, as formulated in \cite{caetano2021cahn}, is given by
\begin{align}
\begin{gathered}
	\matdev u + u (\gradg \cdot V )= \lapg w,\\
	w = - \varepsilon \lapg u + \frac{1}{\varepsilon}F'(u),
\end{gathered}
 \label{cheqn}
\end{align}
subject to the initial condition $u(0) = u_0$ for suitable initial data. 
For simplicity we do not consider arbitrary parametrisations of $\Gamma(t)$, and focus on the setting where  $\Gamma(t)$ is parametrised using the  advective tangential velocity associated to the model.
If one chooses an arbitrary parametrisation then the first equation of \eqref{cheqn} changes as in \cite{caetano2021cahn}.\\

In this paper we focus on a smooth potential  
$F \in C^2(\mathbb{R})$  such that $F(r) = F_1(r) - \frac{\theta}{2}r^2,$ and
\begin{enumerate}
	\item $F(r) \geq \beta$,
	\item $F_1 \geq 0$ is convex,
	\item $\exists q \in [1, \infty)$ such that $|F_1'(r)| \leq \alpha |r|^q + \alpha$,
\end{enumerate}
where $\alpha \geq 0$, $\beta \in \mathbb{R}$, and $q \geq 1$.
A notable example is the quartic potential, $F(r) = \frac{(1-r^2)^2}{4}$, as studied in \cite{elliott2015evolving}.
The system has an associated energy functional, known as the Ginzburg-Landau functional, given by 
\begin{align}
	\label{glfunctional}
	\Ech[u;t] := \int_{\Gamma(t)} \frac{\varepsilon |\gradg u|^2}{2} + \frac{1}{\varepsilon} F(u),
\end{align}
where the constant $\varepsilon > 0$ is proportional to the width of a diffuse interface separating phase  domains in which the solution $u$ takes values close to the minima of $F$.
 For a stationary domain it is known that the Cahn-Hilliard equation is the $H^{-1}$ gradient flow of the Ginzburg-Landau (see for instance \cite{bansch2023interfaces}).
This is not the case for an evolving surface, as remarked in \cite{elliott2015evolving}, but this functional remains useful in the the analysis nonetheless.\\

The Cahn-Hilliard equation originates from the work of Cahn and Hilliard in 1958, \cite{cahn1958free}, in modelling phase separation in a binary alloy.
From a modelling perspective, the function $u$ is understood as the concentration of the mixture. In the case of a double well potential the two minima are associated with two phases.
The Cahn-Hilliard equation naturally finds applications in metallurgy, for example in the studying phenomenon of spinodal decomposition and phase separation \cite{cahn1961spinodal,miller1995spinodal}.
Outside of this application, modified versions of the Cahn-Hilliard equation have found use in the modelling tumour growth \cite{garcke2022viscoelastic,garcke2016cahn}, and in the study of the dynamics of lipid biomembranes \cite{barrett2017finite,zhiliakov2021experimental}.
This study has typically been done on a stationary, Euclidean domain --- see for example \cite{elliott1989second,miranville2019cahn}.
The study of Cahn-Hilliard equations on evolving surfaces is motivated by applications such as \cite{eilks2008numerical,zimmermann2019isogeometric}. 
We refer to \cite{caetano2021cahn,caetano2023regularization,elliott2024navier} for recent results on the analysis.\\

The study of spatially discrete numerical schemes for solving the Cahn-Hilliard equation on evolving surfaces was initiated in 
\cite{elliott2015evolving} and extended in \cite{beschle2022stability}. Error bounds for fully discrete ESFEM schemes for second order parabolic equations  have been shown in \cite{dziuk2012fully,dziuk2012runge,lubich2013backward,frittelli2018numerical,kovacs2023maximal,kovacs2016error}.  We also note that the Cahn-Hilliard equation on a stationary surface was studied in \cite{du2011finite}, in which the authors show error bounds for a fully discrete scheme, and in \cite{li2018direct} the authors look at numerical results for a fully discrete scheme on an evolving surface.\\

To the authors' knowledge, this paper is the first to analyse fully discrete ESFEM schemes for a nonlinear fourth order problem.
The outline of this paper is as follows.
In Section \ref{section:prelim} we introduce some prerequisite results regarding evolving surfaces and ESFEM.
In Section \ref{section:fullydisc} we introduce our fully implicit scheme, and show well-posedness (and numerical stability).
Next, in Section \ref{section:error} we prove error bounds --- where we consider two cases based on the growth of $F_1$.
We then discuss in Section \ref{section:implicitexplicit} how these results adapt for an implicit-explicit time discretisation such as the convex-concave splitting introduced in \cite{blowey1992cahn,EllStu93}.
Finally in Section \ref{section:implementation} we end with some numerical results, which experimentally confirm the error bounds for linear finite elements.

\section{Preliminaries}
\label{section:prelim}
\subsection{Some geometric analysis}
Throughout, we will be considering, for a fixed $k \in \mathbb{N}$,  closed, connected, oriented $C^{k+1}$ surfaces in $\mathbb{R}^{3}$.
Given such a surface, $\Gamma$, we denote its unit oriented  normal vector at a point $x \in \Gamma$ by $\boldsymbol{\nu}(x)$.
As noted in \cite{deckelnick2005computation}, $\Gamma$ partitions $\mathbb{R}^{3}$ into two regions, an interior region which we denote $\Omega$, and an exterior region $\mathbb{R}^{3} \backslash \bar{\Omega}$.
The orientation of the normal is set by pointing $\boldsymbol{\nu}$ outwards from $\Omega$ and the oriented distance function for $\Gamma$ is defined by
$$ d(x) := \begin{cases}
	\inf_{y \in \Gamma} | x - y | , & x \in \mathbb{R}^{3} \backslash \bar{\Omega}\\
	-\inf_{y \in \Gamma} | x - y | , & x \in \bar{\Omega}.
\end{cases}  
$$
Moreover it is known, see \cite{foote1984regularity,gilbarg1977elliptic}, that, for a $C^{k+1}$ surface,  $d: \mathcal{N}(\Gamma) \rightarrow \mathbb{R}$ is a $C^{k+1}$ function, where $\mathcal{N}(\Gamma)$ is some open, tubular neighbourhood of $\Gamma$.
Given a point $x \in \mathcal{N}(\Gamma)$, we may uniquely express $x$ in Fermi coordinates as
\begin{align}
	\label{normalprojection}
	x = p(x) + d(x) \boldsymbol{\nu} (p(x)),
\end{align}
for a  unique $p(x) \in \Gamma$ where 
$p : \mathcal{N}(\Gamma) \rightarrow \Gamma$ is the normal projection operator.
This will be used later in the triangulation of the surface as a way of relating functions on a triangulated surface to the exact surface.

\begin{definition}
	Let $f : \Gamma \rightarrow \mathbb{R}$ be such that we have a differentiable extension of $f$, say $f^e$, defined on an open neighbourhood of $\Gamma$.
	We define the tangential gradient of $f$ at $x \in \Gamma$ to be
	$$ \gradg f(x) = \nabla {f}^e(x) - \left( \nabla f^e(x) \cdot \boldsymbol{\nu}(x) \right) \boldsymbol{\nu}(x) .$$
	We may express this componentwise as
	$$ \gradg f = \left( \underline{D}_1 f,..., \underline{D}_{n+1} f \right).$$
	It can be shown that this expression is independent of the choice of extension, ${f}^e$.\\
	
	We then define the Laplace-Beltrami operator of $f$ at $x \in \Gamma$ to be
	$$ \Delta_{\Gamma} f = \gradg \cdot \gradg f = \sum_{i=1}^{n+1} \underline{D}_i \underline{D}_i f.$$
\end{definition}


\subsubsection{Sobolev spaces}
As in \cite{deckelnick2005computation,gilbarg1977elliptic}, we define Sobolev spaces on $\Gamma$ as follows.

\begin{definition}
	
	For $p \in [1, \infty]$, the Sobolev space $H^{1,p}(\Gamma)$ is  defined by
	$$ H^{1,p}(\Gamma) := \{ f \in L^p(\Gamma) \ | \ \underline{D}_i f \in L^p(\Gamma), \ i=1,...,n+1 \},$$
	and higher order spaces ($m \in \mathbb{N}$) are defined recursively by
	$$ H^{m,p}(\Gamma) := \{ f \in H^{m-1,p}(\Gamma) \ | \ \underline{D}_i f \in H^{m-1,p}(\Gamma), \ i = 1,...,n+1  \}, $$
	where $H^{0,p}(\Gamma) := L^p(\Gamma)$.
	These Sobolev spaces are known to be Banach spaces when equipped with norm,
	$$
	\| f \|_{H^{m,p}(\Gamma)} := \begin{cases}
		\left( \sum_{|\alpha| = 0}^{m}  \| \underline{D}^{\alpha} f \|_{L^p(\Gamma)}^p  \right)^{\frac{1}{p}}, & p \in [1, \infty),\\
		\max_{|\alpha|=1,...,m} \|\underline{D}^\alpha f \|_{L^\infty(\Gamma)}, & p = \infty,
	\end{cases}$$
	where we consider all weak derivatives of order $|\alpha|$.
We use shorthand notation, $H^{m}(\Gamma) := H^{m,2}(\Gamma)$, for the case $p = 2$.
The spaces $H^m(\Gamma)$ are in fact Hilbert spaces, where one can obtain the appropriate inner product by polarisation.
\end{definition}
We refer the reader to \cite{aubin2012nonlinear,hebey2000nonlinear} for further details on Sobolev spaces on manifolds, such as proofs of Sobolev embeddings.\\

Next we introduce some notation which will be used throughout.
For a $\mathcal{H}^2 -$measurable set, $X \subset \mathbb{R}^{3}$, we denote the $\mathcal{H}^2$ measure of $X$ by
$$ |X| := \mathcal{H}^2(X),$$
and for $f \in L^1(X)$ we denote the mean value of $f$ on $X$ by
$$ \mval{f}{X} := \frac{1}{|X|} \int_X f .$$

\subsection{Evolving surfaces}
We now generalise to surfaces which evolve in time.
\begin{definition}[$C^{k+1}$ evolving surface]
	Let $\Gamma_0 \subset \mathbb{R}^3$ be a closed, connected, orientable $C^{k+1}$ surface and let $\Phi :\Gamma_0 \times [0,T] \rightarrow \mathbb{R}^3$ be a smooth map such that
	\begin{enumerate}
		\item For all $t \in [0,T]$,
		$$\Phi(\cdot,t): \Gamma_0 \rightarrow \Phi(\Gamma_0,t) =: \Gamma(t)$$
		is a $C^{k+1}$ diffeomorphism.
		\item $\Phi(\cdot,0) = \mathrm{id}_{\Gamma_0}$.
	\end{enumerate}
	Then we call the family $(\Gamma(t))_{t \in [0,T]}$ a $C^{k+1}$ evolving surface.
\end{definition}
It follows that $\Gamma(t)$ is closed, connected and orientable for all $t$.
We define the space-time surface to be the set
$$\mathcal{G}_T = \bigcup_{t \in [0,T]} \Gamma(t) \times \{ t \},$$
and given $(x,t) \in \mathcal{G}_T$ we denote the unit normal by $\boldsymbol{\nu}(x,t)$.
We will assume throughout that there is a velocity field $V \in C^1([0,T];C^{k+1}(\mathbb{R}^3;\mathbb{R}^3))$ such that for $t \in [0,T]$ and $x \in \Gamma_0$,
\begin{align*}
	\frac{d}{dt} \Phi(x,t) &= V(\Phi(x,t),t),\\
	\Phi(x,0) &= x.
\end{align*}
By compactness of $\mathcal{G}_T$, and assumed smoothness of $V$, there is a constant $C_V$ independent of $t$ such that
$$\|V(t)\|_{C^{k+1}(\Gamma(t))} \leq C_V ,$$
for all $t \in [0,T]$.

\subsubsection{Time-dependent Lebesgue/Bochner spaces}

Next we introduce a way of relating functions on the evolving surface back to the initial surface, which will be necessary for defining the evolving function spaces.
Let $t \in [0,T]$, $\eta \in H^{m,p}(\Gamma_0)$ and $\zeta \in H^{m,p}(\Gamma(t))$ for some $m = 0,...,k+1$, and $p\in [1,\infty]$.
We define the pushforward of $\eta$ by
$$ \Phi_t \eta = \eta(\Phi(\cdot,t)) \in H^{m,p}(\Gamma(t)),$$
and the pullback of $\zeta$ by
$$ \Phi_{-t} \zeta = \zeta(\Phi(\cdot,t)^{-1}) \in H^{m,p}(\Gamma_0).$$
It can be shown that the pairs $(H^{m,p}(\Gamma(t)), \Phi_t)$ are compatible in the sense of \cite{alphonse2023function,alphonse2015abstract} for $m = 0,...,k+1$.
Compatibility of these spaces allows one to obtain Sobolev inequalities independent of $t$.
With these definitions, we can define time-dependent Bochner spaces.

\begin{definition}
	In the following we let $X(t)$ denote a time-dependent Banach space, for instance $H^{m,p}(\Gamma(t))$.
	The space $L^2_X$ consists of (equivalence classes of) functions
	\begin{gather*}
		u:[0,T] \rightarrow \bigcup_{t \in [0,T]} X(t) \times \{ t \},\\
		t \mapsto (\bar{u}(t), t),
	\end{gather*}
	such that $ \Phi_{-(\cdot)} \bar{u}(\cdot) \in L^2(0,T;X(0))$.
	We identify $u$ with $\bar{u}$.
    This space is equipped with norm,
    \[ \|u\|_{L^2_X} = \left( \int_0^T \|u(t)\|_{X(t)}^2 \right)^\frac{1}{2}, \]
	and if we have further that $X(t)$ are a family of Hilbert spaces then this norm is induced by the inner product
	\[(u,v)_{L^2_X} = \int_0^T (u(t),v(t))_{X(t)},\]
	for $u,v \in L^2_X$.
	As justified in \cite{alphonse2015abstract}, we make the identification $(L^2_{X})^* \cong L^2_{X^*}$, and for $X = H^1$ we write $L^2_{H^{-1}} := (L^2_{H^1})^*$.\\
	
	One can similarly define $L^p_X$ for $p \in [1, \infty]$, which is equipped with a norm
	\[\|u\|_{L^p_X} := \begin{cases}
		\left( \int_0^T \|u(t)\|^p_{X(t)} \right)^{\frac{1}{p}}, & p \in [1,\infty),\\
		\underset{t \in [0,T]}{\esssup} \|u(t)\|_{X(t)}, & p = \infty.
	\end{cases}\]
	We refer the reader to \cite{alphonse2023function} for further details.
\end{definition}

\begin{definition}[Strong material derivative]
	Let $u : \mathcal{G}_T \rightarrow \mathbb{R}$ be a $C^1_{C^1}$ function.
    We define the strong material time derivative by
\[\matdev u = \Phi_t \left( \frac{d}{dt} \Phi_{-t} u \right).\]
\end{definition}

Details on the material derivative are found in \cite{alphonse2015abstract,alphonse2023function}.
As in the stationary setting, this can be generalised to define a weak material derivative.

\begin{definition}[Weak material derivative]
	Let $u \in L^2_{H^1}$.
	A function $v \in L^2_{H^{-1}}$ is said to be the weak material time derivative of $u$ if for all $\eta \in \mathcal{D}_{H^1}(0,T)$ we have
    \begin{multline*}
        \int_0^T \langle v(t), \eta(t) \rangle_{H^{-1}(\Gamma(t)) \times H^1(\Gamma(t))} = - \int_0^T \left(u(t), \matdev \eta(t) \right)_{L^2(\Gamma(t))}\\
        -\int_0^T \int_{\Gamma(t)} u(t) \eta(t) \gradg \cdot V(t),
    \end{multline*}
	where
    \[\mathcal{D}_{H^1}(0,T) := \left\{ u \in L^2_{H^1} \mid \Phi_{-t} u(t) \in C^{\infty}_c(0,T;H^1(\Gamma_0)) \right\}.\]
    We abuse notation and write $v = \matdev u$.
\end{definition}

Clearly if $u \in L^2_{H^1}$ has a strong material time derivative it has a weak material time derivative, and the two coincide.\\

We introduce the following  bilinear forms to be used throughout,
\begin{align*}
	m_*(t;\hat{\eta}, \zeta) &:= \langle \hat{\eta}, \zeta \rangle_{H^{-1}(\Gamma(t)) \times H^1(\Gamma(t))},\\
	m(t;\eta,\zeta) &:=  \int_{\Gamma(t)} \eta \zeta, \\
	g(t;\eta,\zeta)  &:=  \int_{\Gamma(t)} \eta \zeta \gradg \cdot V(t),\\
	a(t;\eta,\zeta) &:= \int_{\Gamma(t)} \gradg \eta \cdot \gradg \zeta,
\end{align*}
where $\eta, \zeta \in H^1(\Gamma(t))$, $\hat{\eta} \in H^{-1}(\Gamma(t))$.
The argument in $t$ will often be omitted, as above.
For weakly differentiable functions we have the following transport theorem.
\begin{proposition}[\cite{dziuk2013finite}, Lemma 5.2]
	\label{transport2}
	Let $\eta, \zeta \in H^1_{H^{-1}} \cap L^2_{L^2}.$
	Then $t \mapsto m(\eta(t), \zeta(t))$ is absolutely continuous and such that
	$$ \frac{d}{dt} m(\eta, \zeta) = m_*(\matdev \eta, \zeta) + m_*(\eta, \matdev \zeta) + g(\eta, \zeta).$$
	Moreover, if $\gradg \matdev \eta, \gradg \matdev \zeta \in L^2_{L^2(\Gamma)}$ then $t \mapsto a_S(\eta(t), \zeta(t))$ is absolutely continuous and such that
	$$ \frac{d}{dt} a(\eta, \zeta) = a(\matdev \eta, \zeta) + a(\eta, \matdev \zeta) + b(\eta, \zeta).$$
	Here
	$$ b(\eta, \zeta) = \int_{\Gamma(t)} \mathbf{B}(V) \gradg \eta \cdot \gradg \zeta , $$
	where
	$$ \mathbf{B}(V) = \left( (\gradg \cdot V)\mathrm{id} - (\gradg V + (\gradg V)^T) \right).$$
\end{proposition}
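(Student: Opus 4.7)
The plan is to prove both identities first for smooth $\eta, \zeta$ via a pullback to the reference surface $\Gamma_0$, and then extend to the stated regularity classes by density together with the compatibility of the evolving function spaces $(H^m(\Gamma(t)), \Phi_t)$.

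For $\eta, \zeta$ of class $C^1_{C^1}$, I would rewrite $m(\eta,\zeta) = \int_{\Gamma_0} (\Phi_{-t}\eta)(\Phi_{-t}\zeta)\, J_\Phi(\cdot, t)$, where $J_\Phi(\cdot, t)$ is the Jacobian of $\Phi(\cdot, t): \Gamma_0 \to \Gamma(t)$. Since the integration domain is now fixed, one may differentiate under the integral sign. The identity $\partial_t J_\Phi = J_\Phi\, \Phi_{-t}(\gradg \cdot V)$, together with the definition $\matdev u = \Phi_t \frac{d}{dt} \Phi_{-t} u$, gives after pushing forward
\[\frac{d}{dt} m(\eta,\zeta) = m(\matdev\eta, \zeta) + m(\eta, \matdev\zeta) + g(\eta,\zeta),\]
which is the smooth version of the first identity (with $m$ in place of $m_*$). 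For $a(\eta,\zeta)$ I would apply the same strategy, but additionally differentiate the pulled-back metric tensor $G(\cdot,t)$. Using $\partial_t G^{-1} = -G^{-1}(\partial_t G) G^{-1}$ and expressing $\partial_t G$ through the tangential symmetric part of $\nabla V$ restricted to $\Gamma(t)$, the commutator produced by differentiating $\gradg \eta \cdot \gradg \zeta$ through the pullback combines with the Jacobian factor $(\gradg \cdot V)\mathrm{id}$ to yield exactly the operator $\mathbf{B}(V)$.

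The second step is the density argument. By compatibility of $(H^m(\Gamma(t)), \Phi_t)$, one can approximate $\Phi_{-(\cdot)} \eta$ and $\Phi_{-(\cdot)} \zeta$ on $\Gamma_0$ by smooth sequences in the corresponding time-dependent Bochner space, push them forward, and pass to the limit in the integrated form of the smooth identity. The duality pairing $m_*$ appears naturally here: in the limit $\matdev \eta$ lies only in $L^2_{H^{-1}}$, yet is paired against $\zeta \in L^2_{H^1}$. Absolute continuity of $t \mapsto m(\eta,\zeta)$ then follows at once since the right-hand side lies in $L^1(0,T)$.

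The hard part will be the Dirichlet form identity: the commutator between $\matdev$ and $\gradg$ is genuinely geometric, and the precise form of $\mathbf{B}(V) = (\gradg\cdot V)\mathrm{id} - (\gradg V + (\gradg V)^T)$ only emerges after careful bookkeeping of tangential versus normal components of $\nabla V$ along $\Gamma(t)$. The extra hypothesis $\gradg \matdev\eta, \gradg \matdev\zeta \in L^2_{L^2(\Gamma)}$ is precisely what ensures $\matdev\eta, \matdev\zeta \in L^2_{H^1}$, so that each of $a(\matdev\eta,\zeta)$, $a(\eta,\matdev\zeta)$, and $b(\eta,\zeta)$ is well-defined and time-integrable; without it the density argument above would not close for the second identity.
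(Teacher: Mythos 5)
The paper does not prove this proposition itself but cites it as Lemma 5.2 of Dziuk and Elliott (2013), with the weak-regularity version coming from the Alphonse--Elliott--Stinner framework referenced elsewhere in the paper. Your strategy --- pullback to $\Gamma_0$, differentiation of the Jacobian and pulled-back metric under the integral for smooth functions, then a density argument in the compatible evolving Sobolev--Bochner spaces, with the $m_*$ pairing emerging in the limit --- is exactly the standard proof carried out in those references, and your reading of the extra gradient hypothesis as the guarantee that $\matdev\eta,\matdev\zeta\in L^2_{H^1}$ is correct. The one thing to note is that the pairing $m_*(\matdev\eta,\zeta)$ already needs $\zeta\in L^2_{H^1}$, which you correctly use in the density step; the $L^2_{L^2}$ appearing in the proposition's hypotheses (rather than $L^2_{H^1}$) appears to be a slip in the statement, not in your argument.
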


Using this notation the weak formulation of \eqref{cheqn} is as follows.
Find $u \in H^1_{H^{-1}} \cap L^\infty_{H^1}, w \in L^2_{H^1}$ such that
\begin{gather}
	m_*(u,\eta) + g(u, \eta) + a(w,\eta) = 0, \label{cheqn1}\\
	m(w,\eta) = \varepsilon a(u, \eta) + \frac{1}{\varepsilon}m(F'(u),\eta), \label{cheqn2}
\end{gather}
for all $\eta \in H^1(\Gamma(t))$, almost all $t \in [0,T]$ and such that $u(0) = u_0$ for some initial data $u_0$.
We refer the reader to \cite{caetano2021cahn} for the corresponding analysis.
\subsection{Triangulated surfaces}
\subsubsection{Construction}
Let $\Gamma \subset \mathbb{R}^3$ be a closed, oriented $C^{k+1}$ surface.
We introduce a discretised version of this surface, denoted $\Gamma_h$, which we call an triangulated (or interpolated) surface.

\begin{definition}
	We let $(x_i)_{i=1,...,N_h} \subset \Gamma$ be a collection of nodes used to define a set of triangles\footnote{Here we mean ``triangles'' that are in fact curved for when we consider $k > 1$ - see \cite{elliott2021unified}.
	For $k = 1$ these are indeed triangles in the usual sense.} $\mathcal{T}_h$.
	The triangulated surface, $\Gamma_h$, is defined by an admissible subdivision of triangles, $\mathcal{T}_h$, such that
	$$\bigcup_{K \in \mathcal{T}_h} K = \Gamma_h.$$
	If $K_1, K_2 \in \mathcal{T}_h$ are distinct, then we have $K_1^{\circ} \cap K_2^{\circ} = \emptyset$, and if $ \bar{K}_1 \cap \bar{K}_2 \neq \emptyset$ then this intersection is either a node of the triangulation, or an edge connecting two adjacent nodes.\\
	
	For $K \in \mathcal{T}_h$ we define following quantities
	\begin{gather*}
    h_K := \mathrm{diam}(\tilde{K}),\\
    \rho_K := \sup\{ \mathrm{diam}(B) \ | \ B \text{ is a }2\text{-dimensional ball contained in } \tilde{K} \},
    \end{gather*}
	where $\tilde{K}$ is the affine part of $K$, see \cite{elliott2021unified}.
	We assume that the subdivision $\mathcal{T}_h$ is quasi-uniform, that is, there exists $\rho > 0$ such that for all $h \in (0,h_0)$
    \[\min \{ \rho_K \ | \ K \in \mathcal{T}_h \} \geq \rho h .\]
\end{definition}

Throughout this paper we work with polynomial Lagrange finite elements --- that is our degrees of freedom are given by the point evaluations at some set points.
We will denote the set of shape functions as
\[S_h := \left\{ \phi_h \in C(\Gamma_h) \mid \phi_h |_{K} \circ F_{K}^{-1} \text{ is a polynomial of degree } k, \ K \in \mathcal{T}_h \right\},\]
where $F_K$ is some appropriate reference map to a reference element, we refer to \cite{elliott2021unified} for details.
The effect of the reference mapping here is so that the functions $\phi_h |_{K} \circ F_{K}^{-1}$ are polynomials over the ``flat triangles''.
Here the polynomial degree $k$ is fixed as the same degree of our geometric approximation, $\Gamma_h$, of $\Gamma$.
This choice of matching the degree of the shape functions to the degree of the geometric approximations is known as isoparametric finite elements.
Here the normal $\boldsymbol{\nu}_h$ is defined piecewise on each element of $\Gamma_h$.
As such this gives rise to a discrete tangential gradient, $\gradgh$, defined element-wise on $\Gamma_h$.\\

We note that (locally) one can view $\Gamma$ as the graph of a $C^{k+1}$ function, $g: U \subset \mathbb{R}^2 \rightarrow \mathbb{R}$.
Hence the construction of $\Gamma_h$ can be understood locally as being the graph of a polynomial interpolant of $g$.
We omit further details on construction of higher order polynomial approximations of $\Gamma$ but refer the reader to \cite{bernardi1989optimal,demlow2009higher,elliott2021unified}.

\subsubsection{Lifts and interpolation}
Next we relate functions on $\Gamma_h$ and $\Gamma$ by defining lifts.
We will assume that our triangulated surface $\Gamma_h$ is such that $\Gamma_h \subset \mathcal{N}$ for $\mathcal{N}(\Gamma)$ a sufficiently small tubular neighbourhood of $\Gamma$.
This is possible in practise by considering a sufficiently fine triangulation.
This allows us to define lifts of functions.
\begin{definition}
	For a function $\eta_h : \Gamma_h \rightarrow \mathbb{R}$ we implicitly define the lift operation on $\eta_h$ by
	$$\eta_h^{\ell}(p(x)) := \eta_h(x),$$
	where $p$ is the normal projection operator \eqref{normalprojection}.
 Similarly, for $\eta : \Gamma \rightarrow \mathbb{R}$ we define the inverse lift by
 \[\eta^{-\ell}(x) = \eta(p(x)) . \]
	
\end{definition}

In \cite{elliott2021unified} the following result concerning lifts of functions is proven.
\begin{lemma}
	There exists constants $C_1, C_2$, independent of $h$, such that for $\eta_h \in H^1(\Gamma_h)$
	\begin{gather}
		C_1 \| \eta_h^{\ell} \|_{L^2(\Gamma)} \leq \| \eta_h \|_{L^2(\Gamma_h)} \leq C_2 \| \eta_h^{\ell} \|_{L^2(\Gamma)}, \label{lift1}\\
		C_1 \| \gradg \eta_h^{\ell} \|_{L^2(\Gamma)} \leq \| \gradgh \eta_h \|_{L^2(\Gamma_h)} \leq C_2 \| \gradg \eta_h^{\ell} \|_{L^2(\Gamma)}. \label{lift2}
	\end{gather}
\end{lemma}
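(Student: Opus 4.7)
The plan is to prove both inequalities via a change of variables from $\Gamma_h$ to $\Gamma$ using the normal projection $p$. Since $\Gamma_h \subset \mathcal{N}(\Gamma)$, the restriction $p|_K : K \to p(K) \subset \Gamma$ is, for $h$ sufficiently small, a $C^{k+1}$ diffeomorphism on each curved element $K \in \mathcal{T}_h$. Consequently, the lift $\eta_h \mapsto \eta_h^\ell$ is a bijection between functions on $\Gamma_h$ and functions on $\Gamma$, and the surface measures are related by $d\sigma_h = \mu_h \, d\sigma$ on each lifted element, where $\mu_h$ is the Jacobian of the inverse of $p|_K$. The first step is to establish the uniform two-sided bound
\begin{equation*}
	c_1 \leq \mu_h(x) \leq c_2 \qquad \text{for all } x \in \Gamma,
\end{equation*}
with constants $c_1,c_2>0$ independent of $h$; in fact one has $\mu_h = 1 + O(h^{k+1})$, which follows from the explicit formula for $\mu_h$ involving $d$, $\boldsymbol{\nu}$, $\boldsymbol{\nu}_h$, and the Weingarten map of $\Gamma$, combined with standard interpolation estimates on the distance function and on $\boldsymbol{\nu}_h$.

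For the $L^2$ bound \eqref{lift1}, I would simply apply the change of variables elementwise:
\begin{equation*}
	\|\eta_h\|_{L^2(\Gamma_h)}^2 = \sum_{K \in \mathcal{T}_h} \int_K \eta_h^2 \, d\sigma_h = \sum_{K \in \mathcal{T}_h} \int_{p(K)} (\eta_h^\ell)^2 \mu_h \, d\sigma,
\end{equation*}
and insert the uniform bounds on $\mu_h$. Summing over $K$ recovers the integral over $\Gamma$ (since the $p(K)$ tile $\Gamma$ up to measure zero) and gives \eqref{lift1} with $C_1 = c_1^{1/2}$, $C_2 = c_2^{1/2}$.

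The gradient bound \eqref{lift2} requires more care and is where the main technical work lies. The relation $\eta_h^\ell \circ p = \eta_h$ on $\Gamma_h$, differentiated via the chain rule and projected onto the tangent spaces, yields a pointwise identity of the form
\begin{equation*}
	\gradgh \eta_h = A_h(x) \, \gradg \eta_h^\ell \circ p,
\end{equation*}
where $A_h$ is an explicit matrix built from the tangential projections $P = I - \boldsymbol{\nu}\otimes\boldsymbol{\nu}$ and $P_h = I - \boldsymbol{\nu}_h\otimes\boldsymbol{\nu}_h$, the signed distance $d$, and the Weingarten map $H = D^2 d$ of $\Gamma$. The main obstacle is to show that $A_h$ is uniformly bounded with a uniformly bounded inverse when restricted to the relevant tangent spaces; this relies on the facts that $|d|$ is $O(h^{k+1})$ on $\Gamma_h$ and $|\boldsymbol{\nu} - \boldsymbol{\nu}_h| = O(h^k)$ elementwise, so that $A_h$ is a perturbation of the identity on tangent vectors. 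Once these bounds on $A_h$ and its inverse are in hand, applying the change of variables as above gives \eqref{lift2}.

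Throughout, the quasi-uniformity of $\mathcal{T}_h$ and the $C^{k+1}$ regularity of $\Gamma$ (ensuring $d \in C^{k+1}(\mathcal{N})$) are essential so that the interpolation estimates on the geometric quantities hold with constants independent of $h$ and of the element $K$. Since all estimates are established elementwise and depend only on $\Gamma$ and $\rho$, summation over $\mathcal{T}_h$ yields the global bounds claimed in the lemma.
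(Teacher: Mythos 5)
The paper does not supply its own proof of this lemma---it cites \cite{elliott2021unified}---and your reconstruction is exactly the standard argument used there (and in the earlier sources on surface finite elements such as Dziuk--Elliott and Demlow): change variables under the normal projection $p$, bound the measure-ratio $\mu_h$ away from $0$ and $\infty$ uniformly in $h$, and for the gradient estimate express $\gradgh \eta_h$ in terms of $\gradg \eta_h^\ell$ via a matrix involving $P$, $P_h$, $d$, and the Weingarten map, which is a uniformly invertible map between the relevant tangent spaces once $|d|=O(h^{k+1})$ and $|\boldsymbol{\nu}-\boldsymbol{\nu}_h|=O(h^k)$ are in hand. Your sketch is correct and follows essentially the same approach.
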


This shows that there exist constants $C_1, C_2$ independent of $h$ such that
\[ C_1 |\Gamma| \leq |\Gamma_h| \leq C_2 |\Gamma| .\] 
Another useful consequence of the stability of the lift is that it allows one to obtain a Poincaré inequality on $\Gamma_h$ independent of $h$.\\

We note that we consider Sobolev functions on $\Gamma_h$ in the sense of ``broken Sobolev spaces''.
That is to say, we understand the norm $\|\cdot\|_{H^{1,p}(\Gamma_h)}$ to be
\[ \|\eta_h \|_{H^{1,p}(\Gamma_h)} = \begin{cases}
	\left( \sum_{K \in {\mathcal{T}_h}} \|\eta_h\|_{H^{1,p}(K)}^p \right)^\frac{1}{p}, & p \in [1,\infty),\\
	\max_{K \in \mathcal{T}_h} \|\eta_h\|_{H^{1,\infty}(K)}, & p = \infty,
\end{cases} \]
where $\|\cdot\|_{H^{1,p}(K)}$ is understood in the usual sense.
We refer the reader to \cite{brenner2008mathematical,elliott2021unified} for further details.

\begin{definition}
	We say that our triangulation is exact if the lifted triangles $K^{\ell} := \{ x^{\ell} \mid x \in \Gamma_h \}$ form a conforming subdivision of $\Gamma$.
\end{definition}

\subsubsection{Evolving triangulated surfaces}
We consider an evolving surface, $(\Gamma(t))_{t \in [0,T]}$, and construct an evolving triangulated surface as follows.
Firstly, we construct an admissible triangulation,  $\mathcal{T}_h(0)$, of $\Gamma_0$, with nodes $(x_{i,0})_{i=1,...,N_h}$, as in the beginning of this section.
We denote this triangulated surface as $\Gamma_{h}(0)$.
The nodes of $\Gamma_{h}(0)$ then evolve in time according to the ODE,
$$ \frac{d}{dt} x_i(t) = V(x_i(t),t), \qquad x_i(0) = x_{i,0},$$
where $V$ is the velocity field associated with the evolution of $\Gamma(t)$.
By assumptions on the smoothness of $V$ and the Picard-Lindel\"off theorem we find that there exists a unique nodal evolution $x_i(t)$, given $x_{i,0}$.
This induces a triangulation $\mathcal{T}_h(t)$, where $K(0) \in \mathcal{T}_h(0)$ gives rise to a triangle $ K(t) \in \mathcal{T}_h(t)$ where the nodes have evolved by the above ODE.
The triangulated surface $\Gamma_h(t)$ is then defined as
$$ \Gamma_h(t) := \bigcup_{K(t) \in \mathcal{T}_h(t)} K(t),$$
which will be admissible by construction of $\Gamma_{h}(0)$.
Here the $h$ parameter is now defined to be
$$ h:= \sup_{t \in [0,T]} \max_{K(t) \in \mathcal{T}_h(t)} h_{K(t)}. $$
\begin{definition}
\begin{itemize}
    \item The evolving triangulated surface, $\Gamma_h(t)$, is said to be uniformly quasi-uniform if there exists $\rho > 0$ such that for all $t \in [0,T]$, and $h \in (0, h_0)$ we have
    \[\min\{ \rho_{K(t)} \ | \ K(t) \in \mathcal{T}_h(t) \} \geq \rho h .\]
    \item We say the triangulation is an exact evolving triangulation if for all $t \in [0,T]$
    \[\bigcup_{K(t) \in \mathcal{T}_h(t)} K^{\ell}(t) = \Gamma(t).\]
\end{itemize}
\end{definition}
In our analysis we consider an evolving triangulated surface, $\Gamma_h(t)$ which is uniformly quasi-uniform, and exact all $t \in [0,T]$.
We also assume that for each $t \in [0,T]$ that $\Gamma_h(t) \subset \mathcal{N}(\Gamma(t))$ so that we may define lift at all time $t \in [0,T]$.
We denote the discrete spacetime surface as
\[\mathcal{G}_{h,T} := \bigcup_{t \in [0,T]} \Gamma_h(t) \times \{t\}.\]

We note that as the domain is evolving, the set of basis functions also evolves in time.
As such, we denote the set of basis functions at time $t$ to be
$$ S_h(t) = \left\{ \phi_h \in C(\Gamma_h(t)) \mid \phi_h |_{K(t)} \circ F_K(t)^{-1} \text{ is a polynomial of degree } k,  K(t) \in \mathcal{T}_h(t) \right\},$$
where the reference map $F_K$ now has some time dependence.
This definition allows one to characterise the velocity of the surface $\Gamma_h(t)$, as an arbitrary point $x(t) \in \Gamma_h(t)$ will evolve according to the discrete velocity, $V_h$, given by
\[\frac{d}{dt} x(t) = V_h(x(t),t) := \sum_{i=1}^{N_h} \dot{x}_i(t) \phi_i(x(t),t) = \sum_{i=1}^{N_h} V(x_i(t),t) \phi_i(x(t),t),\]
where $\phi_i(t)$ is the `$i$'th nodal basis function of $\Gamma_h(t)$.
From this we observe that $V_h$ is the Lagrange interpolant of $V$.\\

The evolution of the nodes by $V$ induces map $\Phi^h:\Gamma_h(0) \rightarrow \Gamma_h(t)$, from which we may define a discrete (strong) material time derivative, given by
\[\matdev_h \eta_h := \Phi^h_t \left( \frac{d}{dt} \Phi^h_{-t} \eta_h \right) ,\]
for a sufficiently smooth $\eta_h$.
Here $\Phi^h_t, \Phi^h_{-t}$ are the pushforward/pullback maps respectively, defined similarly to the continuous surface.
One can similarly define a weak discrete material derivative in the standard way.\\

An important consequence of this is the transport property of basis functions, that is
\[\matdev_h \phi_i = 0\]
for all of the nodal basis functions of $\Gamma_h(t)$.
This follows due to the fact that $\phi_i(t) = \Phi^h_t \phi_i(0)$ for each basis function.
This is an important property, which is exploited in implementing evolving surface finite element schemes, as it eliminates any velocity terms in the formulation.\\

We now state a discrete analogue of the transport theorem, Proposition \ref{transport2}.
Here we denote the (time-dependent) bilinear forms by
\begin{align*}
	m_h(t;\eta_h, \zeta_h) &:= \int_{\Gamma_h(t)} \eta_h \zeta_h,\\
	a_h(t;\eta_h, \zeta_h) &:= \int_{\Gamma_h(t)} \gradgh \eta_h \cdot \gradgh \zeta_h,\\
	g_h(t;\eta_h, \zeta_h) &:= \int_{\Gamma_h(t)} \eta_h \zeta_h \gradgh \cdot V_h,
\end{align*}
where we typically omit the $t$ argument, and it will be clear from context at what time the bilinear forms are understood.
We then have a discrete transport theorem for these bilinear forms.
\begin{proposition}
	\label{transport3}
	Let $\eta_h, \zeta_h \in S_h^T$, then we have
	\begin{gather*}
		\frac{d}{dt} m_h(\eta_h, \zeta_h) = m_h(\matdev_h \eta_h, \zeta_h) + m_h(\eta_h, \matdev_h \zeta_h) + g_h(\eta_h, \zeta_h),\\
		\frac{d}{dt} a_h(\eta_h, \zeta_h) = a_h(\matdev_h \eta_h, \zeta_h) + a_h(\eta_h, \matdev_h \zeta_h) + b_h(\eta_h, \zeta_h).
	\end{gather*}
	Here
	$$ b_h(\eta_h, \zeta_h) = \int_{\Gamma_h(t)} \mathbf{B}_h(V_h) \gradgh \eta_h \cdot \gradgh \zeta_h , $$
	where
	$$ \mathbf{B}_h(V_h) = \left( (\gradgh \cdot V_h)\mathrm{id} - (\gradgh V_h + (\gradgh V_h)^T) \right).$$
\end{proposition}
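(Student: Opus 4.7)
The plan is to mirror the proof of the continuous transport theorem (Proposition \ref{transport2}) element-by-element. By construction, each curved triangle $K(t) \in \mathcal{T}_h(t)$ is the image $\Phi^h_t(K(0))$ of a fixed reference piece under the smooth flow of $V_h$, so on each $K(t)$ we are in exactly the setting of a smoothly evolving surface patch (with velocity $V_h$ in place of $V$), and the classical Leibniz formula applies. Summation over the finitely many elements then produces the discrete identity globally, with no boundary contributions since we never integrate by parts.

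The first step is to fix $K(t)$ and pull back to $K(0)$ via $\Phi^h_t$, picking up a Jacobian $J_h(\cdot,t)$ whose time derivative satisfies $\frac{d}{dt} J_h = J_h \, (\gradgh \cdot V_h)$ (the standard identity for the area element under a tangential flow; here $V_h$ need not be tangent to $\Gamma_h(t)$ pointwise, but this formula still holds on each curved element since $\Gamma_h(t) = \Phi^h_t(\Gamma_h(0))$, as in the continuous case). Applying the ordinary product rule in the reference coordinates, pulling forward, and recognizing the definition of $\matdev_h$ yields
\begin{equation*}
    \frac{d}{dt} \int_{K(t)} \eta_h \zeta_h = \int_{K(t)} \bigl( \matdev_h \eta_h \, \zeta_h + \eta_h \, \matdev_h \zeta_h + \eta_h \zeta_h \, \gradgh \cdot V_h \bigr).
\end{equation*}
Summing over $K(t) \in \mathcal{T}_h(t)$ gives the first identity.

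For the stiffness identity I would proceed analogously, but now I also need to differentiate $\gradgh \eta_h \cdot \gradgh \zeta_h$ along the flow. The key ingredient is the element-wise analogue of the formula for the time derivative of the tangential gradient: $\matdev_h (\gradgh \eta_h) = \gradgh(\matdev_h \eta_h) - (\gradgh V_h)^T \gradgh \eta_h$, up to normal components that vanish when paired with a tangential vector. Combining this with the Jacobian derivative $\gradgh \cdot V_h$ and symmetrising exactly reproduces the matrix
\[ \mathbf{B}_h(V_h) = (\gradgh \cdot V_h)\,\mathrm{id} - (\gradgh V_h + (\gradgh V_h)^T), \]
so that
\begin{equation*}
    \frac{d}{dt} \int_{K(t)} \gradgh \eta_h \cdot \gradgh \zeta_h = \int_{K(t)} \bigl( \gradgh \matdev_h \eta_h \cdot \gradgh \zeta_h + \gradgh \eta_h \cdot \gradgh \matdev_h \zeta_h + \mathbf{B}_h(V_h) \gradgh \eta_h \cdot \gradgh \zeta_h \bigr),
\end{equation*}
and summation over elements gives the second identity.

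The main obstacle, which is more conceptual than technical, is justifying the tangential-gradient identity on a piecewise-polynomial surface whose discrete velocity $V_h$ is only the Lagrange interpolant of the smooth $V$ and hence is not tangent to $\Gamma_h(t)$ in general. The resolution is that on each element one works with the intrinsic flow $\Phi^h_t$, which does map $K(0)$ onto $K(t)$ by construction, so every geometric identity used in the proof of Proposition \ref{transport2} transfers verbatim with $\Gamma$, $V$, $\gradg$ replaced by $K$, $V_h$, $\gradgh$. No inter-element boundary terms arise because the computation never invokes integration by parts, and the regularity $\eta_h, \zeta_h \in S_h^T$ together with the transport property $\matdev_h \phi_i = 0$ ensures that $\matdev_h \eta_h, \matdev_h \zeta_h$ are themselves well-defined piecewise polynomial functions, so every term in the statement makes sense.
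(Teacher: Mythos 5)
The paper does not supply a proof of Proposition \ref{transport3}; it is stated as a known discrete analogue of Proposition \ref{transport2} and is established in the references the paper cites (e.g.\ \cite{dziuk2013finite}, \cite{elliott2021unified}). Your argument reproduces the standard proof: reduce to a single curved element $K(t)=\Phi^h_t(K(0))$, pull back, use the Jacobian identity $\frac{d}{dt}J_h = J_h\,\gradgh\cdot V_h$, use the commutator between $\matdev_h$ and $\gradgh$, symmetrise, and sum over elements, with no boundary terms since nothing is integrated by parts and with $\matdev_h\eta_h \in S_h(t)$ a bona fide shape function by the transport property.

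Two small phrasing points. First, the identity $\frac{d}{dt}J_h = J_h\,\gradgh\cdot V_h$ is not "the standard identity \ldots under a tangential flow" that then needs to be excused for a non-tangential $V_h$; it is the general formula for the rate of change of the surface measure under any flow that carries $K(0)$ onto $K(t)$, exactly as in the continuous case where $V$ typically has a normal component too, so no special pleading is required. Second, the commutator $\matdev_h\gradgh\eta_h = \gradgh\matdev_h\eta_h - (\gradgh V_h)^T\gradgh\eta_h$ holds only modulo a normal-direction correction (the exact element-wise formula carries a tangential projector on the $(\gradgh V_h)^T$ term), but you correctly observe that this correction is annihilated upon contraction with the tangential vector $\gradgh\zeta_h$, and that combining with the Jacobian contribution and symmetrising yields exactly $\mathbf{B}_h(V_h)$. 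With those two caveats understood, the argument is sound and matches what the cited references do.
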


We will not take note of the domain when discussing evolving Bochner spaces.
For example, we will write $L^2_{H^1}$, to be understood as either $L^2_{H^1}$ on $(\Gamma(t))_{t \in [0,T]}$ or $(\Gamma_h(t))_{t \in [0,T]}$, as it will be clear which we mean from context.

\subsubsection{Geometric perturbation estimates and the Ritz projection}
Here we state several results which are crucial to the numerical analysis of surface PDEs.
We state the general results for isoparametric surface finite elements of order $k$.
Firstly, as noted in \cite{elliott2021unified}, one can obtain another material derivative by (inverse) lifting a function onto $\Gamma_h(t)$, differentiating, and lifting back onto $\Gamma(t)$.
This lifted material derivative is useful as it is intermediate between $\matdev_h$ and $\matdev$.
\begin{definition}
	Let $\eta \in C^1_{L^2}$, then we define the (strong) lifted material derivative of $\eta$ as
	$$ \matdev_\ell \eta = \left( \matdev_h \eta^{-\ell} \right)^\ell.$$
\end{definition}
This allows one to obtain alternate versions of the transport theorem, Proposition \ref{transport2}.
\begin{proposition}
	\label{transport4}
	Let $\eta, \zeta \in C^1_{L^2}$, then
	\begin{align*}
		\frac{d}{dt} m(t; \eta, \zeta) = m(t; \matdev_\ell, \zeta) + m(t; \eta, \matdev_\ell \zeta) + g_\ell (t; \eta, \zeta),
	\end{align*}
	where
	$$g_\ell (t; \eta, \zeta) = \int_{\Gamma(t)} \eta \zeta (\gradg \cdot V_h^\ell).$$
	Similarly, if $\eta, \zeta \in C^1_{H^1(\Gamma)}$, then
	\begin{align*}
		\frac{d}{dt} a(t; \eta, \zeta) = a(t; \matdev_\ell, \zeta) + a(t; \eta, \matdev_\ell \zeta) + b_\ell (t; \eta, \zeta),
	\end{align*}
	where
	$$ b_\ell(\eta, \zeta) = \int_{\Gamma(t)} \mathbf{B}(V_h^\ell) \gradg \eta \cdot \gradg \zeta , $$
	and $\mathbf{B}(V_h^\ell)$ is as in Proposition \ref{transport2}.
\end{proposition}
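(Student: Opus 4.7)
My plan is to reduce Proposition~\ref{transport4} to the discrete transport identity, Proposition~\ref{transport3}, via the change of variables $y = p(x)$ between $\Gamma(t)$ and $\Gamma_h(t)$. Letting $\delta_h : \Gamma_h(t) \to \mathbb{R}_+$ denote the Jacobian of the normal-projection lift so that $\int_{\Gamma(t)} f = \int_{\Gamma_h(t)} f^{-\ell} \delta_h$, both bilinear forms can be rewritten as integrals on the evolving triangulated surface, to which the discrete transport theorem applies directly. The payoff is that all genuine time differentiation is done on $\Gamma_h(t)$ (where $V_h$ is the actual velocity), and only geometric terms survive the conversion back to $\Gamma(t)$.

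For the $m$ identity I would first write
\[ m(t;\eta,\zeta) = \int_{\Gamma_h(t)} \eta^{-\ell} \zeta^{-\ell} \delta_h, \]
then differentiate using Proposition~\ref{transport3} (extended from finite element functions to $C^1$ functions, which is standard). Applying the Leibniz rule for $\matdev_h$ yields
\[ \frac{d}{dt} m = \int_{\Gamma_h(t)} \bigl[(\matdev_h \eta^{-\ell}) \zeta^{-\ell} + \eta^{-\ell}(\matdev_h \zeta^{-\ell})\bigr] \delta_h + \eta^{-\ell} \zeta^{-\ell} \bigl(\matdev_h \delta_h + \delta_h \gradgh \cdot V_h\bigr). \]
Converting back to integrals on $\Gamma(t)$ and using the defining identity $\matdev_\ell \eta = (\matdev_h \eta^{-\ell})^\ell$ turns the first two terms into $m(\matdev_\ell \eta, \zeta) + m(\eta, \matdev_\ell \zeta)$. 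The residue becomes $\int_{\Gamma(t)} \eta \zeta \, \rho$ with $\rho = \bigl((\matdev_h \delta_h + \delta_h \gradgh \cdot V_h)/\delta_h\bigr)^\ell$, and the identification $\rho = \gradg \cdot V_h^\ell$ is the key geometric step.

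The $a$ identity follows the same template, but pulling the tangential-gradient inner product back to $\Gamma_h(t)$ introduces an additional symmetric tensor (essentially $\mathbf{Q}_h^T \mathbf{Q}_h$ relating $\gradg$ to $\gradgh$). After applying Proposition~\ref{transport3} on $\Gamma_h(t)$ and lifting, the $\matdev_\ell$-terms separate cleanly and the non-material residue must collapse to $b_\ell(\eta, \zeta)$. The principal obstacle in both parts is this geometric identification step: one must prove that the combined time derivative of the pullback Jacobian (and of the gradient tensor, in the $a$ case) under $\matdev_h$ together with the discrete divergence $\gradgh \cdot V_h$ reproduces exactly $\gradg \cdot V_h^\ell$ and $\mathbf{B}(V_h^\ell)$ respectively on $\Gamma(t)$. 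This is a local computation using the Fermi-coordinate identity $x = p(x) + d(x) \boldsymbol{\nu}(p(x))$ and $\nabla d = \boldsymbol{\nu} \circ p$; equivalently, one may recognise it as an application of Proposition~\ref{transport2} to the alternative parametrisation $p_t \circ \Phi^h_t \circ (p_0)^{-1}$ of $\Gamma(t)$, whose associated material derivative is precisely $\matdev_\ell$ by construction.
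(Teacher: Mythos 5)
Your proposal is sound in structure, and the ``equivalent'' observation you tuck in at the end is actually the route the paper implicitly takes: by the very definition $\matdev_\ell \eta = (\matdev_h \eta^{-\ell})^\ell$, the operator $\matdev_\ell$ is the strong material derivative associated with the reparametrisation $\Psi_t := p_t \circ \Phi_t^h \circ p_0^{-1}$ of $\Gamma(t)$, and Proposition~\ref{transport2} applied to that flow (with $V$ replaced by the velocity of $\Psi_t$, which is what $V_h^\ell$ denotes in $g_\ell, b_\ell$) yields both identities at once. That is the one-line derivation the paper intends; it does not spell out a separate proof, deferring to \cite{elliott2021unified}.

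Your primary route --- writing $m(t;\eta,\zeta)=\int_{\Gamma_h(t)}\eta^{-\ell}\zeta^{-\ell}\delta_h$, applying the scalar transport theorem on $\Gamma_h(t)$, and then pushing forward --- is a genuinely different argument. It is logically valid, but it shifts all the difficulty into the geometric identity
\[
\frac{\matdev_h \delta_h + \delta_h \, \gradgh\!\cdot V_h}{\delta_h} = \bigl(\gradg\!\cdot V_h^\ell\bigr)^{-\ell},
\]
which is precisely the Reynolds transport identity for the Jacobian $\delta_h$ of $p_t$ and is \emph{equivalent} to your alternative route, not a shortcut around it. You flag this as the principal obstacle, which is the correct assessment, but you neither prove it nor reduce it to a cited fact. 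The same caveat applies, more acutely, to the $a$-form: the pullback inserts the Gram tensor relating $\gradg$ and $\gradgh$, and showing that the combined derivative of that tensor and $\delta_h$ collapses to $\mathbf{B}(V_h^\ell)$ is the entire content of the second identity. So the skeleton is right and the alternative route you mention is preferable (it avoids ever introducing $\delta_h$), but as written the proposal defers the substantive step in both parts. One last point worth being careful about: $V_h^\ell$ here must be read as the velocity of the flow $\Psi_t$; the naive pointwise lift of $V_h$ differs from it by a normal and an $O(h^{k+1})$ correction coming from the time dependence of $p_t$, and the clean exact statements only hold for the former.
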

This allows one to define a weak lifted material derivative in the usual way.
We can then related $\matdev_\ell$ and $\matdev$ through the following result.
\begin{lemma}[\cite{elliott2021unified}, Lemma 9.25]
	Let $\eta \in H^1_{H^1}$.
	Then we have
	\begin{align}
		\|\matdev \eta - \matdev_\ell \eta\|_{L^2(\Gamma(t))} \leq C h^{k+1} \|\eta\|_{H^1(\Gamma(t))}.\label{derivativedifference1}
	\end{align}
	If we have that $\eta \in H^1_{H^2}$, then
	\begin{align}
		\|\gradg(\matdev \eta - \matdev_\ell \eta)\|_{L^2(\Gamma(t))} \leq C h^k \|\eta\|_{H^2(\Gamma(t))}.\label{derivativedifference2}
	\end{align}
\end{lemma}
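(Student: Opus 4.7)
The plan is to reduce both estimates to Lagrange interpolation bounds on the velocity field. The operator $\matdev$ tracks rates of change along the smooth flow $\Phi$ with velocity $V$, while $\matdev_\ell \eta = (\matdev_h \eta^{-\ell})^\ell$ is, after unfolding the lift, the material derivative on $\Gamma(t)$ induced by pushing the discrete velocity $V_h$ of $\Gamma_h(t)$ back through the normal projection $p$. Since $V_h$ is the nodal Lagrange interpolant of $V$ on $\Gamma_h(t)$, and both flows parametrise the same family of surfaces $\Gamma(t)$, the discrepancy between the two material derivatives is controlled entirely by the velocity interpolation error $V - V_h^\ell$.

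First I would record the classical Lagrange interpolation estimates on the isoparametric triangulation. Using $V \in C^1([0,T]; C^{k+1}(\mathbb{R}^3;\mathbb{R}^3))$ together with $\|V(t)\|_{C^{k+1}(\Gamma(t))} \leq C_V$ and the uniform quasi-uniformity assumption, one has
$$\|V - V_h^\ell\|_{L^\infty(\Gamma(t))} \leq C h^{k+1}, \qquad \|\gradg(V - V_h^\ell)\|_{L^\infty(\Gamma(t))} \leq C h^k,$$
uniformly in $t \in [0,T]$.

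Next I would establish the pointwise identity $\matdev \eta - \matdev_\ell \eta = (V - V_h^\ell) \cdot \gradg \eta$ on $\Gamma(t)$. For $x = p(y)$ with $y \in \Gamma_h(t)$, the discrete trajectory through $y$ moves with $V_h(y,t)$; a chain rule applied to $p$ transports this trajectory onto a curve through $x$ in $\Gamma(t)$, and combining with $\eta^{-\ell}(y) = \eta(p(y))$ shows that $\matdev_\ell \eta(x,t)$ is the material derivative of $\eta$ along the flow on $\Gamma(t)$ whose velocity is the projection of $V_h^\ell$ onto the tangent space. Subtracting the analogous expression for $\matdev \eta$ and using that $\eta^{-\ell}$ is constant along normal lines (so that normal components of either velocity act identically) produces the identity. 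With this identity, Hölder's inequality gives
$$\|\matdev \eta - \matdev_\ell \eta\|_{L^2(\Gamma(t))} \leq \|V - V_h^\ell\|_{L^\infty} \|\gradg \eta\|_{L^2} \leq C h^{k+1} \|\eta\|_{H^1(\Gamma(t))},$$
and differentiating tangentially and applying Leibniz splits $\gradg(\matdev \eta - \matdev_\ell \eta)$ into a term of order $\|\gradg(V - V_h^\ell)\|_{L^\infty} \|\gradg \eta\|_{L^2}$ and a term of order $\|V - V_h^\ell\|_{L^\infty} \|\gradg^2 \eta\|_{L^2}$, each bounded by $C h^k \|\eta\|_{H^2(\Gamma(t))}$.

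I expect the main obstacle to be the pointwise identity in the third step: one must carefully track how the lift conjugates $\matdev_h$ into $\matdev_\ell$, verify that the normal components of the two velocities truly cancel (exploiting that $\eta^{-\ell}$ is constant along normal lines of $\Gamma(t)$ and that the nodes of $\Gamma_h(t)$ lie on $\Gamma(t)$, so $V = V_h$ there), and confirm that the remaining discrepancy acts on $\eta$ only through $\gradg \eta$ rather than through an ambient derivative of an arbitrary extension. Once this bookkeeping is done, the interpolation estimates close the proof routinely.
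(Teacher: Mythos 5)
Your high-level plan — write $\matdev\eta - \matdev_\ell\eta$ as a velocity difference acting on $\gradg\eta$, bound the velocity difference, and then apply H\"older and Leibniz — is the right skeleton, and it matches the spirit of the cited result. But the key intermediate claim is stated incorrectly, and the ingredient that makes the argument go through is not supplied.

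The gap is in your identification of the lifted material derivative with a flow of velocity $V_h^\ell$ (or its tangential projection). Unwinding the definition $\matdev_\ell\eta = (\matdev_h\eta^{-\ell})^\ell$, one finds $\matdev_\ell\eta(x,t) = \frac{d}{dt}\eta(x(t),t)$ where $x(t) = p(\Phi_t^h y_0,t)$ is the \emph{projected} discrete trajectory. Its velocity is
$$\dot x = D_y p\cdot V_h(y,t) + \partial_t p(y,t), \qquad y = \Phi_t^h y_0,$$
and this is \emph{not} $V_h^\ell(x)$, nor its tangential projection. Writing $p(y,t) = y - d(y,t)\nu(p(y,t),t)$ and differentiating gives $\dot x = V_h(y) - \dot d\,\nu - d\,\frac{d}{dt}\nu(x(t),t)$, so the discrepancy $\dot x - V_h^\ell$ is controlled by $|\dot d|$ and $|d|\,|D\nu|$, not by Lagrange interpolation of $V$. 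The identity that does hold is $\matdev\eta - \matdev_\ell\eta = (V - \dot x)\cdot\gradg\eta$ (here the normal components of $V$ and $\dot x$ agree because both flows stay on $\Gamma(t)$), and the velocity error splits as $(V - V_h^\ell) + (V_h^\ell - \dot x)$. Your interpolation estimates handle the first piece; the second piece requires the separate geometric bounds $\|d\|_{L^\infty},\|\dot d\|_{L^\infty}\lesssim h^{k+1}$, $\|\gradg d\|_{L^\infty},\|\gradg\dot d\|_{L^\infty}\lesssim h^k$ on the signed-distance function of the isoparametric surface. You explicitly flag the "bookkeeping" of conjugating $\matdev_h$ through the lift as the main obstacle, and that instinct is correct, but you then characterize the remaining work as routine once the identity is in place; in fact the geometric estimates on $p$ and $d$ are the substantive content here, on par with the interpolation estimates, and your proposal attributes the entire discrepancy to velocity interpolation. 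With those geometric bounds added, the argument closes as you describe: $L^\infty$–$L^2$ H\"older for \eqref{derivativedifference1}, and a tangential Leibniz rule producing the two terms $\|\gradg(V-\dot x)\|_{L^\infty}\|\gradg\eta\|_{L^2}$ and $\|V-\dot x\|_{L^\infty}\|\gradg^2\eta\|_{L^2}$ for \eqref{derivativedifference2}.
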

We now state some results which allow us to compare bilinear forms on $\Gamma(t)$ and $\Gamma_h(t)$.
The following results are proven in \cite{elliott2021unified}.
\begin{lemma}
	Let $\eta_h, \zeta_h \in H^1(\Gamma_h(t))$, and $h$ be sufficiently small.
	Then there exists a constant $C > 0$, independent of $t,h$, such that
	\begin{align}
		\left| m \left(t; \eta_h^{\ell}, \zeta_h^{\ell} \right) - m_h \left(t; \eta_h, \zeta_h \right) \right| &\leq C h^{k+1} \| \eta_h \|_{L^2(\Gamma_h(t))} \| \zeta_h \|_{L^2(\Gamma_h(t))},\label{perturb1}\\
		\left| a \left(t; \eta_h^{\ell}, \zeta_h^{\ell} \right) - a_h \left(t; \eta_h, \zeta_h \right) \right| &\leq C h^{k+1} \| \gradgh \eta_h \|_{L^2(\Gamma_h(t))} \| \gradgh \zeta_h \|_{L^2(\Gamma_h(t))},\label{perturb2}\\
		\left| g_\ell \left(t; \eta_h^{\ell}, \zeta_h^{\ell} \right) - g_h \left(t; \eta_h, \zeta_h \right) \right| &\leq C h^{k+1} \| \eta_h \|_{L^2(\Gamma_h(t))} \| \zeta_h \|_{L^2(\Gamma_h(t))},\label{perturb3}\\
		\left| b_\ell \left(t; \eta_h^{\ell}, \zeta_h^{\ell} \right) - b_h \left(t; \eta_h, \zeta_h \right) \right| &\leq C h^{k+1} \| \gradgh \eta_h \|_{L^2(\Gamma_h(t))} \| \gradgh \zeta_h \|_{L^2(\Gamma_h(t))},\label{perturb4}\\
		\left| g \left(t; \eta_h^{\ell}, \zeta_h^{\ell} \right) - g_\ell \left(t; \eta_h^{\ell}, \zeta_h^{\ell} \right) \right| &\leq C h^k \| \eta_h \|_{H^1(\Gamma_h(t))} \| \zeta_h \|_{H^1(\Gamma_h(t))},\label{perturb5}\\
		\left| b \left(t; \eta_h^{\ell}, \zeta_h^{\ell} \right) - b_\ell \left(t; \eta_h^{\ell}, \zeta_h^{\ell} \right) \right| &\leq C h^k \| \eta_h \|_{H^1(\Gamma_h(t))} \| \zeta_h \|_{H^1(\Gamma_h(t))}.\label{perturb6}
	\end{align}
\end{lemma}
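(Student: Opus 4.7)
The plan is to pull each continuous-surface bilinear form back to the discrete surface $\Gamma_h(t)$ by change of variables along the normal projection $p$, and then to quantify how the resulting Jacobian-type factors deviate from the identity.

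For \eqref{perturb1}, \eqref{perturb2}, \eqref{perturb3} and \eqref{perturb4}, I would start by writing, via the area-formula for the lift,
\[ m(t;\eta_h^\ell, \zeta_h^\ell) = \int_{\Gamma_h(t)} \eta_h \zeta_h\, \delta_h, \qquad a(t;\eta_h^\ell, \zeta_h^\ell) = \int_{\Gamma_h(t)} R_h \gradgh \eta_h \cdot \gradgh \zeta_h, \]
where $\delta_h$ denotes the ratio of surface measures under $p$ and $R_h$ is the symmetric matrix that encodes the combined effect of the pullback on tangential gradients together with $\delta_h$. The standard geometric perturbation estimates from \cite{elliott2021unified} (for an isoparametric degree-$k$ approximation) yield
\[ \| 1 - \delta_h \|_{L^\infty(\Gamma_h(t))} + \| R_h - P_h \|_{L^\infty(\Gamma_h(t))} \leq C h^{k+1}, \]
with $P_h$ the projection onto the discrete tangent plane; the bounds \eqref{perturb1} and \eqref{perturb2} then follow immediately by Cauchy--Schwarz. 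For \eqref{perturb3} and \eqref{perturb4}, I would repeat the same argument but carry the extra factor $\gradg \cdot V_h^\ell$, respectively the matrix $\mathbf{B}(V_h^\ell)$, through the change of variables; since $V_h^\ell$ is fixed on both sides of these inequalities (it appears in both $g_\ell$ and $g_h$, with only the measure and the tangential projection changing), the same $O(h^{k+1})$ perturbation of $\delta_h$ and $R_h$ controls the difference, provided one uses the uniform bound $\| V_h^\ell \|_{W^{1,\infty}} \leq C$ coming from quasi-uniformity of the mesh and smoothness of $V$.

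For \eqref{perturb5} and \eqref{perturb6}, the situation is genuinely different: here the surface $\Gamma(t)$ is fixed and the only discrepancy is between $V$ and $V_h^\ell$. I would rewrite
\[ g(t; \eta_h^\ell, \zeta_h^\ell) - g_\ell(t; \eta_h^\ell, \zeta_h^\ell) = \int_{\Gamma(t)} \eta_h^\ell \zeta_h^\ell\, \gradg \cdot (V - V_h^\ell), \]
and similarly with $\mathbf{B}(V) - \mathbf{B}(V_h^\ell)$ for the $b$-estimate. Since $V_h$ is the Lagrange interpolant of degree $k$ of $V$ on $\Gamma_h(t)$, standard interpolation theory gives $\| V - V_h^\ell \|_{W^{1,\infty}(\Gamma(t))} \leq C h^{k}$, which accounts for the loss of one power of $h$ compared to \eqref{perturb1}--\eqref{perturb4}. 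Applying H\"older's inequality with the $W^{1,\infty}$-norm of the velocity error and the $L^2$-equivalence of lifts \eqref{lift1}--\eqref{lift2} yields \eqref{perturb5} and \eqref{perturb6}.

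The main obstacle is not conceptual but bookkeeping: one must carefully distinguish the two sources of geometric error, namely (a) the discrepancy between integrals over $\Gamma(t)$ and over $\Gamma_h(t)$, which is $O(h^{k+1})$ because the isoparametric interpolation of the identity map is superconvergent in this sense, and (b) the discrepancy between $V$ and its degree-$k$ Lagrange interpolant, which is only $O(h^k)$ in $W^{1,\infty}$. Untangling these is what produces two different convergence rates in the six estimates, and one has to be careful that in \eqref{perturb3}--\eqref{perturb4} the comparison really is between $g_\ell$ and $g_h$ (same velocity, different measure), rather than between $g$ and $g_h$.
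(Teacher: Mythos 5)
The paper does not prove this lemma; it cites it directly from \cite{elliott2021unified}, so there is no internal proof to compare against. That said, your sketch is a faithful outline of the argument used in that reference and in the broader surface-FEM literature: decompose the error into (a) pure geometric perturbation (change of measure $\delta_h$, gradient-transformation matrix $R_h$) for \eqref{perturb1}--\eqref{perturb4}, which is $O(h^{k+1})$ for isoparametric degree-$k$ approximation, and (b) velocity-interpolation error $V - V_h^\ell$ for \eqref{perturb5}--\eqref{perturb6}, which is only $O(h^k)$ in $W^{1,\infty}$. That split is exactly what produces the two different rates.

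Two small points worth tightening. In \eqref{perturb3}--\eqref{perturb4}, when you say the velocity is ``fixed on both sides,'' note that the integrands really carry $\gradg \cdot V_h^\ell$ on $\Gamma(t)$ and $\gradgh \cdot V_h$ on $\Gamma_h(t)$; these are not the same function transported, and relating them requires both the $O(h^{k+1})$ geometric perturbation of the gradient operator \emph{and} the uniform $L^\infty$ bound on $\gradgh V_h$ (obtained from $\|\gradgh(V - V_h)\|_{L^\infty} \leq C h^k$ plus smoothness of $V$). This step is correct but deserves to be spelled out rather than absorbed into ``the same argument.'' Separately, your H\"older argument for \eqref{perturb5}--\eqref{perturb6} in fact delivers the stronger estimate with $L^2(\Gamma_h(t))$ norms on the right-hand side, since $\gradg\cdot(V - V_h^\ell)$ and $\mathbf{B}(V) - \mathbf{B}(V_h^\ell)$ are bounded uniformly by $Ch^k$ in $L^\infty$; the paper records the weaker $H^1$ version, and either suffices for the downstream analysis.
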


Next we introduce a projection onto the shape functions which is useful in the error analysis for surface finite elements.
\begin{definition}
	For $z \in H^1(\Gamma(t))$ we define the Ritz projection\footnote{Some authors define the Ritz projection differently, see \cite{elliott2015evolving} Remark 3.4.}, $\Pi_h z \in S_h(t)$, to be the unique solution of
	\begin{equation}
		\label{ritz}
		a_h(\Pi_h z , \phi_h) = a(z , \phi_h^{\ell}),
	\end{equation}
	for all $\phi_h \in S_h(t)$, subject to the condition
	$$ \int_{\Gamma_h(t)} \Pi_h z = \int_{\Gamma(t)} z.$$
	We denote the lift of the Ritz projection by
	$\pi_h z = (\Pi_h z)^{\ell}.$
\end{definition}
One has the following bounds for the Ritz projection, for which we refer the reader to \cite{elliott2015evolving,elliott2021unified}.

\begin{lemma}
	For $z \in H^1(\Gamma(t))$ we have the following,
	\begin{gather}
		\label{ritz1}
		\| \pi_h z \|_{H^1(\Gamma(t))} \leq C \| z \|_{H^1(\Gamma(t))},\\
		\label{ritz2}
		\| \pi_h z - z \|_{L^2(\Gamma(t))} \leq C h \|z\|_{H^1(\Gamma(t))}.
	\end{gather}
	Moreover, if $z \in H^{2}(\Gamma(t))$ then
	\begin{gather}
		\|\Pi_h z\|_{L^\infty(\Gamma_h(t))} = \|\pi_h z\|_{L^\infty(\Gamma(t))} \leq C \|z\|_{H^2(\Gamma(t))}. \label{ritz4}
	\end{gather}
	If further still we have $z \in H^{k+1}(\Gamma(t))$ then
	\begin{gather}
		\label{ritz3}
		\| \pi_h z - z \|_{L^2(\Gamma(t))} + h \| \gradg (\pi_h z - z) \|_{L^2(\Gamma(t))} \leq C h^{k+1} \|z \|_{H^{k+1}(\Gamma(t))}.
	\end{gather}
\end{lemma}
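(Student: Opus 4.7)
The plan is to prove the four estimates in sequence, using the Galerkin-type relation \eqref{ritz} defining the Ritz projection, Lagrange interpolation estimates on $\Gamma_h(t)$, the lift equivalence \eqref{lift1}--\eqref{lift2}, and the geometric perturbation estimate \eqref{perturb2} to transfer between the bilinear forms $a$ and $a_h$. The main technical obstacle is the consistent handling of geometric remainders, since $\Pi_h z$ is not a standard Galerkin projection on either $\Gamma(t)$ or $\Gamma_h(t)$ but rather mixes the two geometries through its defining relation; the whole argument therefore rests on how cleanly one can quantify the mismatch via \eqref{perturb2}.

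First, for the $H^1$ stability \eqref{ritz1}, testing \eqref{ritz} with $\phi_h = \Pi_h z$ yields $a_h(\Pi_h z, \Pi_h z) = a(z,\pi_h z)$; Cauchy--Schwarz and \eqref{lift2} give $\|\gradg \pi_h z\|_{L^2(\Gamma(t))} \leq C\|\gradg z\|_{L^2(\Gamma(t))}$, and the zero-mean constraint combined with an $h$-independent Poincar\'e inequality on $\Gamma_h(t)$ (inherited from $\Gamma(t)$ via lift equivalence) upgrades this to the full $H^1$ bound. For the gradient part of \eqref{ritz3}, let $I_h z^{-\ell} \in S_h(t)$ denote the $P^k$ Lagrange interpolant of $z^{-\ell}$, set $\psi_h := \Pi_h z - I_h z^{-\ell}$, and test \eqref{ritz} with $\psi_h$ to obtain
$$a_h(\psi_h,\psi_h) = \bigl[a(z,\psi_h^\ell) - a_h(z^{-\ell},\psi_h)\bigr] + a_h(z^{-\ell} - I_h z^{-\ell},\psi_h);$$
the bracket is controlled by $Ch^{k+1}\|z\|_{H^1}\|\gradgh\psi_h\|_{L^2(\Gamma_h)}$ via \eqref{perturb2}, and the second piece by the standard interpolation bound of order $Ch^k\|z\|_{H^{k+1}(\Gamma)}$. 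A triangle inequality combined with $\|\gradgh(z^{-\ell} - I_h z^{-\ell})\|_{L^2(\Gamma_h)}$ and \eqref{lift2} then delivers the $\|\gradg(\pi_h z - z)\|_{L^2}$ contribution to \eqref{ritz3}.

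For the $L^2$ estimates I use Aubin--Nitsche duality. Since $e := \pi_h z - z$ has zero mean by the defining constraint of $\Pi_h$, I let $\xi \in H^2(\Gamma(t))$ solve $-\lapg \xi = e$ with $\int_{\Gamma(t)}\xi = 0$, so that $\|\xi\|_{H^2}\leq C\|e\|_{L^2}$ by elliptic regularity. Writing $\|e\|_{L^2}^2 = a(\xi, e) = a(\xi - \pi_h\xi, e) + a(\pi_h\xi, e)$ and using \eqref{ritz} applied to $\xi$ to rewrite
$$a(\pi_h\xi, e) = a(\pi_h\xi, \pi_h z) - a_h(\Pi_h\xi,\Pi_h z),$$
the right-hand side is bounded by $Ch^{k+1}\|\xi\|_{H^1}\|z\|_{H^1}$ via \eqref{perturb2} and \eqref{ritz1}, while $|a(\xi - \pi_h\xi, e)| \leq Ch\|\xi\|_{H^2}\|\gradg e\|_{L^2}$ using the gradient estimate applied to $\xi$ at $P^1$ order (valid for any $k\geq 1$ since $S_h$ contains the $P^1$ Lagrange elements). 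Dividing by $\|e\|_{L^2}$ and combining with \eqref{ritz1} yields \eqref{ritz2}, and combining with the $H^{k+1}$ gradient bound just proved yields the $L^2$ part of \eqref{ritz3}. Finally, \eqref{ritz4} follows from the decomposition $\Pi_h z = \psi_h + I_h z^{-\ell}$: an inverse estimate $\|\psi_h\|_{L^\infty(\Gamma_h)} \leq Ch^{-1}\|\psi_h\|_{L^2(\Gamma_h)}$ (valid under uniform quasi-uniformity) together with the bound $\|\psi_h\|_{L^2(\Gamma_h)} = O(h^2\|z\|_{H^2})$ obtained from the preceding duality argument at linear order controls the first piece by $Ch\|z\|_{H^2}$, while the second is controlled by the $L^\infty$-stability of $I_h$ and the Sobolev embedding $H^2(\Gamma(t))\hookrightarrow L^\infty(\Gamma(t))$ on the $2$-surface.
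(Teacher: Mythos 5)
The paper does not actually prove this lemma; it cites \cite{elliott2015evolving,elliott2021unified}. Your proof follows essentially the standard route used there (interpolant comparison for $H^1$, Aubin--Nitsche duality for $L^2$, inverse estimate plus $L^\infty$-stable interpolation for $L^\infty$), and the architecture of your argument is sound. The bookkeeping with $\psi_h := \Pi_h z - I_h z^{-\ell}$, the use of \eqref{perturb2} to absorb the geometric mismatch $a(z,\psi_h^\ell) - a_h(z^{-\ell},\psi_h)$, and the rewrite $a(\pi_h\xi,e) = a(\pi_h\xi,\pi_h z) - a_h(\Pi_h \xi, \Pi_h z)$ via the defining relation \eqref{ritz} are all correct.

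There is, however, one genuine (if minor) gap in the duality step. You assert that $e := \pi_h z - z$ has zero mean ``by the defining constraint of $\Pi_h$,'' and then pose the dual problem $-\lapg \xi = e$. But the constraint in \eqref{ritz} is $\int_{\Gamma_h} \Pi_h z = \int_\Gamma z$, i.e.\ the mean-zero condition lives on $\Gamma_h$, not on $\Gamma$. Lifting to $\Gamma$ introduces a Jacobian $\mu_h$ with $|\mu_h - 1| \leq C h^{k+1}$, so $\int_\Gamma e = \int_{\Gamma_h} \Pi_h z \,(\mu_h - 1) = O(h^{k+1})\|z\|_{H^1(\Gamma)}$, which is nonzero in general. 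As written, the dual problem is not solvable and the identity $\|e\|_{L^2}^2 = a(\xi,e)$ is false. The repair is standard: set $\bar e := \mval{e}{\Gamma}$, solve $-\lapg\xi = e - \bar e$, and observe that $\|e\|_{L^2}^2 = a(\xi,e) + |\Gamma|\,\bar e^2$ where the extra term is $O(h^{2(k+1)})\|z\|_{H^1}^2$ and is absorbed into the $C h^{k+1}\|z\|_{H^1}$ contribution after a Young inequality. With this correction the rest of your duality argument, and the downstream uses of it in \eqref{ritz2}, \eqref{ritz3} and the $\|\psi_h\|_{L^2} = O(h^2\|z\|_{H^2})$ bound feeding \eqref{ritz4}, all go through unchanged.
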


\begin{lemma}
	For $z : \mathcal{G}_T \rightarrow \mathbb{R}$ with $z, \matdev{z} \in H^2(\Gamma(t))$ then $\matdev_h \Pi_h z \in S_h(t)$ exists and
	\begin{equation}
		\label{ritzddtnorm}
		\| \matdev_h \Pi_h z\|_{H^1(\Gamma_h(t))} \leq C\left( \|z\|_{H^2(\Gamma(t))} + \|\matdev z\|_{H^2(\Gamma(t))} \right).
	\end{equation}
	If we have further regularity, so that $z, \matdev{z} \in H^{k+1}(\Gamma(t))$
	\begin{multline}
		\label{ritzddt}
		\left \|  \matdev_\ell(\pi_h z - z) \right\|_{L^2(\Gamma(t))} + h \left \| \gradg  \matdev_\ell(\pi_h z - z)\right\|_{L^2(\Gamma(t))}\\
  \leq C h^{k+1} \left( \|z\|_{H^{k+1}(\Gamma(t))} + \left\| \matdev z \right\|_{H^{k+1}(\Gamma(t))} \right),
	\end{multline}
	where $C$ is a constant independent of $h,t$.
\end{lemma}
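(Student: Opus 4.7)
The plan is to differentiate the defining equation \eqref{ritz} of the Ritz projection in time using the discrete and lifted transport theorems (Propositions \ref{transport3} and \ref{transport4}), producing a variational problem for $\matdev_h \Pi_h z$ on $S_h(t)$. The transport property $\matdev_h \phi_h = 0$ of the nodal basis functions (and hence $\matdev_\ell \phi_h^\ell = 0$ of their lifts) makes these differentiations particularly clean, yielding
\begin{equation*}
a_h(\matdev_h \Pi_h z, \phi_h) + b_h(\Pi_h z, \phi_h) = a(\matdev_\ell z, \phi_h^\ell) + b_\ell(z, \phi_h^\ell)
\end{equation*}
for all $\phi_h \in S_h(t)$, while differentiating the mean-value constraint $\int_{\Gamma_h(t)}\Pi_h z = \int_{\Gamma(t)} z$ using the transport theorem prescribes $\int_{\Gamma_h(t)} \matdev_h \Pi_h z$ in terms of $z$ and $\matdev z$.

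For existence and the first estimate \eqref{ritzddtnorm}, I would observe that this variational problem plus the prescribed mean uniquely determines $\matdev_h \Pi_h z \in S_h(t)$ by a Lax--Milgram argument on the mean-zero subspace (using the uniform Poincar\'e inequality on $\Gamma_h(t)$ coming from stability of the lift, \eqref{lift1}--\eqref{lift2}). Testing with the mean-zero part of $\matdev_h \Pi_h z$ and bounding the right-hand side via $\|\mathbf{B}(V_h^\ell)\|_{L^\infty}$, $\|\mathbf{B}_h(V_h)\|_{L^\infty}$, the Ritz stability \eqref{ritz1}, and the identity $\matdev_\ell z = \matdev z + (\matdev_\ell z - \matdev z)$ combined with \eqref{derivativedifference1}, yields the uniform $H^1(\Gamma_h(t))$ bound, after adding back the (easily controlled) mean contribution.

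For the error bound \eqref{ritzddt}, I would use the decomposition
\begin{equation*}
\matdev_\ell(\pi_h z - z) = \bigl(\matdev_h \Pi_h z - \Pi_h(\matdev z)\bigr)^\ell + \bigl(\pi_h(\matdev z) - \matdev z\bigr) + \bigl(\matdev z - \matdev_\ell z\bigr).
\end{equation*}
The last two terms satisfy the required $h^{k+1}$ (resp.\ $h^{k+1}/h$) bounds via \eqref{ritz3} applied to $\matdev z$ and via \eqref{derivativedifference1}--\eqref{derivativedifference2}. For the first piece, subtracting the Ritz equation for $\matdev z$ from the differentiated equation gives
\begin{equation*}
a_h\bigl(\matdev_h \Pi_h z - \Pi_h(\matdev z), \phi_h\bigr) = a\bigl(\matdev_\ell z - \matdev z, \phi_h^\ell\bigr) + b_\ell(z, \phi_h^\ell) - b_h(\Pi_h z, \phi_h),
\end{equation*}
whose right-hand side is controlled using \eqref{derivativedifference2}, \eqref{perturb4}, and the Ritz bounds \eqref{ritz3} applied to $z$. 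A standard energy estimate then delivers the $H^1$ part of order $h^k$, and the $L^2$ part of order $h^{k+1}$ follows by an Aubin--Nitsche duality argument on the discrete elliptic problem, mirroring the proof of \eqref{ritz3}.

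The main obstacle is the Aubin--Nitsche step: one must test against the solution of a dual elliptic problem and carefully propagate the geometric perturbation terms $b_\ell - b_h$ and $\matdev_\ell z - \matdev z$ through it, using \eqref{perturb1}--\eqref{perturb6} and the higher-regularity versions of \eqref{derivativedifference1}--\eqref{derivativedifference2}, so as to recover the full extra power of $h$ while keeping the final bound dependent only on $\|z\|_{H^{k+1}(\Gamma(t))}$ and $\|\matdev z\|_{H^{k+1}(\Gamma(t))}$ uniformly in $t$.
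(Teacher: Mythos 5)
The paper does not actually prove this lemma; it is stated as a known result imported from the literature (the preceding Ritz bounds are attributed to \cite{elliott2015evolving,elliott2021unified}), so there is no in-paper proof to compare against. Your outline is the standard one and is structurally correct: differentiate the Ritz identity \eqref{ritz} using Propositions \ref{transport3} and \ref{transport4} (exploiting $\matdev_h \phi_h = 0$, $\matdev_\ell \phi_h^\ell = 0$), deduce a variational problem for $\matdev_h \Pi_h z$, get existence and \eqref{ritzddtnorm} via Lax--Milgram on the mean-zero subspace, and prove \eqref{ritzddt} via your decomposition plus an energy estimate and Aubin--Nitsche; the error equation for $\matdev_h \Pi_h z - \Pi_h(\matdev z)$ you write down is the right one.

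However, two steps that you describe as routine are the genuinely delicate ones, and the estimates you cite do not by themselves give the claimed power of $h$. First, the mean: differentiating the constraint gives
$m_h(\matdev_h \Pi_h z - \Pi_h(\matdev z), 1) = g(z,1) - g_h(\Pi_h z, 1)$,
which is not zero, and the stated geometric estimate \eqref{perturb5} applied to the intermediate term $g(\pi_h z, 1) - g_\ell(\pi_h z, 1)$ only yields $O(h^k)$. To recover the $O(h^{k+1})$ required by \eqref{ritzddt} one must integrate by parts on $\Gamma(t)$ to move $\gradg\cdot$ off $V - V_h^\ell$ and onto $\pi_h z$, using that $\|V - V_h^\ell\|_{L^\infty(\Gamma(t))} = O(h^{k+1})$ (rather than its gradient, which is only $O(h^k)$). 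Second, in the Aubin--Nitsche step, after testing the error equation with $\pi_h\psi$ for the dual solution $\psi$, the crude gradient--gradient bounds on $a(\matdev_\ell z - \matdev z, \pi_h\psi)$ and on $b_\ell(z - \pi_h z, \pi_h\psi)$ give only $O(h^k)\|\psi\|_{H^1}$. Simply ``propagating the geometric perturbation terms via \eqref{perturb1}--\eqref{perturb6}'' is not enough: you must split $\pi_h\psi = (\pi_h\psi - \psi) + \psi$, use the $O(h)$ Ritz error for $\psi \in H^2$ on the first piece, and integrate by parts against the smooth dual solution on the second piece (e.g.\ $a(\matdev_\ell z - \matdev z, \psi) = -\int_\Gamma (\matdev_\ell z - \matdev z)\lapg\psi$, which is then $O(h^{k+1})$ by \eqref{derivativedifference1}; similarly for the $b_\ell$ term after writing $\mathbf{B}(V_h^\ell) = \mathbf{B}(V) - \mathbf{B}(V - V_h^\ell)$). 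These integration-by-parts moves are the real content of the Aubin--Nitsche step and should be spelled out rather than attributed to the perturbation estimates alone.
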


\section{A fully implicit scheme}
\label{section:fullydisc}
\subsection{Scheme and main results}
We begin by introducing some notation to be used throughout.
	\begin{enumerate}
 \item We define $\Gamma_h^n := \Gamma_h(t_n)$ for the time discrete surfaces, and $S_h^n := S_h(t_n)$ for the corresponding space of shape functions.
    \item For functions $\phi_h^{n-1} \in S_h^{n-1}, \phi_h^n \in S_h^n$ we define functions $\overline{\phi_h^{n-1}} \in S_h^n$, and $\underline{\phi_h^n} \in S_h^{n-1}$ to be the function in $S_h^n$ (respectively $S_h^{n-1}$) with the same nodal value as $\phi_h^{n-1}$ (respectively $\phi_h^n$).
	\item We define the fully discrete material time derivative for a sequence of functions $(\phi_h^n)_n$, where $\phi_h^n \in S_h^n$ by
    \[\matdevtau \phi_h^n := \frac{1}{\tau}\left( \phi_h^n - \overline{\phi_h^{n-1}} \right) \in S_h^n.\]
	\end{enumerate}
It is worth noting that in \cite{dziuk2012fully} the authors define the discrete time derivative so that it belongs to $S_h^{n-1}$, which in our notation is $\underline{\matdevtau \phi_h^n}$.\\

We consider the following numerical scheme:
Find given some initial data $U_h^0 = U_{h,0} \in S_h^0$, find $U_h^n, W_h^n \in S_h^n$ solving
\begin{gather}
	\begin{aligned}
    \frac{1}{\tau} \left(m_h(t_n;U_h^n, \phi_h^n) - m_h(t_{n-1};U_h^{n-1}, \phi_h^{n-1}) \right) + a_h(t_n;W_h^n,\phi_h^n)\\ = m_h(t_{n-1};U_h^{n-1}, \underline{\matdevtau \phi_h^n}),
    \end{aligned}\label{fdiscfecheqn1}\\ 
	m_h(t_n;W_h^n, \phi_h^n)  =\varepsilon a_{h}(t_n;U_h^n,\phi_h^n) + \frac{1}{\varepsilon} m_h(t_n;F'(U_h^n), \phi_h^n),\label{fdiscfecheqn2}
\end{gather}
for all $\phi_h^{n-1}\in S_h^{n-1}, \phi_h^{n}\in S_h^{n}$.
As usual we will often omit the time argument, as it will be clear from the arguments in the bilinear form.
Here the initial data $U_{h,0} \in S_h^0$ is an approximation of some sufficiently regular function, $u_0$ on $\Gamma(0)$.
More specifically, we assume that there is some $u_0 \in H^{k+1}(\Gamma(0))$ such that
\begin{align}
	\|u_0 - U_{h,0}^\ell\|_{L^2(\Gamma(0))} + h\|\gradg(u_0 - U_{h,0}^\ell)\|_{L^2(\Gamma(0))}\leq Ch^{k+1} \|u_0\|_{H^{k+1}(\Gamma(0))}.
	\label{discinitialdata}
\end{align}
Examples of such initial data are the Lagrange interpolant and the Ritz projection of $u_0$.\\

This backward Euler time discretisation is analogous to the second order linear problem studied in \cite{dziuk2012fully}, and indeed we have written \eqref{fdiscfecheqn1} in this way to be more in line with how their numerical scheme is posed. 
However, \eqref{fdiscfecheqn1} is in fact independent of choice of $\phi_h^{n-1}$, and noting that $\underline{\phi_h^{n}} = \phi_h^{n-1} - \tau \underline{\matdevtau \phi_h^n}$, we may rewrite \eqref{fdiscfecheqn1} as
\begin{align}
	\frac{1}{\tau} \left(m_h(t_n; U_h^n, \phi_h^n) - m_h(t_{n-1};U_h^{n-1}, \underline{\phi_h^n}) \right) + a_h(t_n;W_h^n,\phi_h^n) = 0 \label{fdiscfecheqn3}.
\end{align}
This form is more natural to the problem and will be used throughout.

\subsection{Well-posedness}
We begin this subsection by showing uniqueness, given some restriction on the timestep size.
\begin{lemma}
	\label{implicit uniqueness}
	Let $\tau < \frac{ 4 \varepsilon^3}{\theta^2}$, and $U_{h,0} \in S_h^0$.
	Then if the system \eqref{fdiscfecheqn1}, \eqref{fdiscfecheqn2} admits a solution pair $(U_h^n, W_h^n)$, it is unique.
\end{lemma}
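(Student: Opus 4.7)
The plan is to subtract the equations for two putative solution pairs and combine energy estimates, using the convex splitting $F = F_1 - \frac{\theta}{2}(\cdot)^2$ together with a discrete $H^{-1}$ test function, to produce an identity whose coercivity is precisely governed by the stated threshold $\tau < 4\varepsilon^3/\theta^2$.

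Given two solution pairs $(U_1^n,W_1^n)$ and $(U_2^n,W_2^n)$ of \eqref{fdiscfecheqn1}-\eqref{fdiscfecheqn2} sharing the datum $U_h^{n-1}$, I would set $U := U_1^n - U_2^n$ and $W := W_1^n - W_2^n$ in $S_h^n$. Subtracting the two systems gives, for all $\phi_h \in S_h^n$,
\begin{align*}
\tfrac{1}{\tau} m_h(U,\phi_h) + a_h(W,\phi_h) &= 0,\\
m_h(W,\phi_h) &= \varepsilon\, a_h(U,\phi_h) + \tfrac{1}{\varepsilon} m_h\bigl(F_1'(U_1^n) - F_1'(U_2^n),\phi_h\bigr) - \tfrac{\theta}{\varepsilon} m_h(U,\phi_h).
\end{align*}
Testing the first identity with $\phi_h \equiv 1$ yields $\int_{\Gamma_h^n} U = 0$, so the discrete inverse Laplacian $\invshn{n} U \in S_h^n$ is well-defined on the mean-zero subspace via $a_h(\invshn{n} U, \phi_h) = m_h(U,\phi_h)$ for all $\phi_h$, inducing the discrete $H^{-1}$-type norm $\normshn{U}{n}^2 := m_h(U,\invshn{n}U)$.

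Next I would test the first equation with $\invshn{n} U$ and the second with $U$, and combine, to obtain
\[
\tfrac{1}{\tau} \normshn{U}{n}^2 + \varepsilon \|\gradgh U\|_{L^2(\Gamma_h^n)}^2 + \tfrac{1}{\varepsilon} m_h\bigl(F_1'(U_1^n) - F_1'(U_2^n), U\bigr) = \tfrac{\theta}{\varepsilon} \|U\|_{L^2(\Gamma_h^n)}^2.
\]
Convexity of $F_1$ renders the nonlinear term non-negative, so it may be dropped. The right-hand side is then controlled via the standard Cahn-Hilliard interpolation
\[
\|U\|_{L^2(\Gamma_h^n)}^2 = a_h(\invshn{n} U, U) \leq \normshn{U}{n}\,\|\gradgh U\|_{L^2(\Gamma_h^n)},
\]
followed by Young's inequality with weights chosen so as to absorb exactly the $\frac{1}{\tau}\normshn{U}{n}^2$ term on the left, yielding
\[
\left( \varepsilon - \tfrac{\theta^2 \tau}{4\varepsilon^2} \right) \|\gradgh U\|_{L^2(\Gamma_h^n)}^2 \leq 0.
\]
The hypothesis $\tau < 4\varepsilon^3/\theta^2$ makes the parenthesised coefficient strictly positive, forcing $\gradgh U \equiv 0$; together with $\int_{\Gamma_h^n} U = 0$ on the connected $\Gamma_h^n$ this gives $U = 0$, and the second (difference) equation then collapses to $m_h(W,\phi_h) = 0$ for all $\phi_h$, so $W = 0$ as well.

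The principal obstacle is conceptual rather than computational: one must recognise that the $\frac{\theta}{\varepsilon}\|U\|_{L^2}^2$ term, which would otherwise destroy coercivity, lies in a scaling sense between $\frac{1}{\tau}\normshn{U}{n}^2$ and $\varepsilon\|\gradgh U\|^2$, and is absorbed by them exactly when $\tau < 4\varepsilon^3/\theta^2$. The evolution of the surface plays essentially no role here, since the entire argument is internal to a single time step; the only geometric input needed is connectedness of $\Gamma_h^n$, so that the mean-zero constraint combined with vanishing gradient forces $U \equiv 0$.
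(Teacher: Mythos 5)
Your proof is correct, and it takes a genuinely different (though closely related) route from the paper's. The paper tests the first difference equation with $\widehat{W}_h^n$ and the second with $\widehat{U}_h^n$, subtracts to obtain
\[
\tau\,\|\gradgh \widehat{W}_h^n\|^2_{L^2(\Gamma_h^n)} + \varepsilon\,\|\gradgh \widehat{U}_h^n\|^2_{L^2(\Gamma_h^n)} \le \frac{\theta}{\varepsilon}\,\|\widehat{U}_h^n\|^2_{L^2(\Gamma_h^n)},
\]
and then controls the right-hand side by testing the first equation once more with $\widehat{U}_h^n$, which expresses $\|\widehat{U}_h^n\|^2$ as $-\tau\, a_h(\widehat{W}_h^n,\widehat{U}_h^n)$ and invokes Young to absorb both gradient terms. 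You instead test the first equation with $\invshn{n}U$ and the second with $U$, combining into a single energy identity in the discrete $H^{-1}$ norm, and then use the Cahn--Hilliard interpolation $\|U\|^2 \le \normshn{U}{n}\,\|\gradgh U\|$ together with Young to absorb the non-coercive term. Both arguments yield exactly the coefficient $\varepsilon - \tau\theta^2/(4\varepsilon^2)$ and hence the identical threshold $\tau < 4\varepsilon^3/\theta^2$. The paper's route has the minor advantage of not needing the discrete inverse Laplacian $\invshn{n}$ (which the paper only introduces later, in the existence argument), while yours makes the $H^{-1}$ gradient-flow structure of the scheme visible and explains conceptually why the non-coercive $\theta$-term sits between the $\frac{1}{\tau}\normshn{\cdot}{n}^2$ and $\varepsilon\|\gradgh\cdot\|^2$ contributions; the mechanism mirrors the one the paper itself employs in Proposition~\ref{continuous dependence}. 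One small remark: it is worth stating, as you do, that mean-zero follows by testing the first difference equation with $\phi_h \equiv 1$; the paper takes this for granted but it is needed to invoke Poincar\'e and to ensure $\invshn{n}U$ is defined.
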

\begin{proof}
	Assume there are two solution pairs $(U_{h,i}^{n},W_{h,i}^{n})$ ($i = 1,2$) of \eqref{fdiscfecheqn1},\eqref{fdiscfecheqn2}.
	We define the following differences
	\begin{gather*}
		\widehat{U}_h^n = U_{h,1}^{n} - U_{h,2}^{n},\\
		\widehat{W}_h^n = W_{h,1}^{n} - W_{h,2}^{n},
	\end{gather*}
	which solve
	\begin{gather}
		\label{fdiscunique1}
		m_h(\widehat{U}_h^n, \phi_h^n) + \tau a_h(\widehat{W}_h^n , \phi_h^n) = 0,\\
		m_h(\widehat{W}_h^n, \phi_h^n) = \varepsilon a_h(\widehat{U}_h^n , \phi_h^n) + \frac{1}{\varepsilon} m_h(F'(U_{h,1}^n) - F'(U_{h,2}^{n}),\phi_h^n),	\label{fdiscunique2}
	\end{gather}
	for all $\phi_h^n \in S_h^n$.
	Testing \eqref{fdiscunique1} with $\widehat{W}_h^n$ and \eqref{fdiscunique2} with $\widehat{U}_h^n$ and subtracting the resulting equations yields
	$$ \tau \| \gradgh \widehat{W}_h^n \|_{L^2(\Gamma_h^n)}^2 + \varepsilon \| \gradgh \widehat{U}_h^n \|_{L^2(\Gamma_h^n)}^2 = -  \frac{1}{\varepsilon} m_h(F'(U_{h,1}^{n}) - F'(U_{h,2}^{n}),\widehat{U}_h^n).$$
	Convexity of $F_1$ implies $F_1'$ is monotonic and hence
	$$ m_h(F'(U_{h,1}^{n}) - F'(U_{h,2}^{n}),\widehat{U}_h^n) \geq -\frac{\theta}{\varepsilon}\|\widehat{U}_h^n \|_{L^2(\Gamma_h^n)},$$
	which yields
	$$ \tau \| \gradgh \widehat{W}_h^n \|_{L^2(\Gamma_h^n)}^2 + \varepsilon \| \gradgh \widehat{U}_h^n \|_{L^2(\Gamma_h^n)}^2 \leq \frac{\theta}{\varepsilon}\|\widehat{U}_h^n\|_{L^2(\Gamma_h^n)}^2.$$
	Next we test \eqref{fdiscunique1} with $\widehat{U}_h^n$ and use Young's inequality to see
	$$ \frac{\theta}{\varepsilon}\|\widehat{U}_h^n\|_{L^2(\Gamma_h^n)}^2 = -\frac{\tau \theta}{\varepsilon}a_h(\widehat{W}_h^n, \widehat{U}_h^n) \leq \tau \|\gradgh \widehat{W}_h^n\|_{L^2(\Gamma_h^n)}^2 + \frac{\tau\theta^2}{4 \varepsilon^2}\|\gradgh \widehat{W}_h^n\|_{L^2(\Gamma_h^n)}^2$$
	Thus, using the assumption on $\tau$, we see
	$$ 0 \leq \left( \varepsilon - \frac{\tau \theta^2}{4 \varepsilon^2} \right) \| \gradgh \widehat{U}_h^n \|_{L^2(\Gamma_h)}^2 \leq 0 ,$$
	and hence $ \| \gradgh \widehat{U}_h^n \|_{L^2(\Gamma_h^n)} = 0 $.
	Observing that $\mval{\widehat{U}_h^n}{\Gamma_h^n} = 0$ we may use Poincar\'e's inequality to conclude $\widehat{U}_h^n = 0$.
	From this it is clear that $\widehat{W}_h^n = 0$ also. \qed
\end{proof}

\begin{remark}
	The condition on the timestep size $\tau < \frac{4 \varepsilon^3}{\theta^2}$ coincides with the uniqueness condition in \cite{copetti1992numerical}, where they have considered $\gamma = \varepsilon^2$, and a time scaling $t \mapsto \varepsilon t$.
    We emphasize that this is only a smallness condition on the timestep size, and not a Courant-Friedrichs-Lewy (CFL) condition.
\end{remark}

Next we recall some results from \cite{dziuk2012fully} about functions valued on different time discrete surfaces.

\begin{lemma}[\cite{dziuk2012fully}, Lemma 3.6]
	For $\phi_h^n \in S_h^n$, $\tau$ sufficiently small and $t \in [t_{n-1}, t_n]$ there exists a constant $C$ independent of $t, \tau, h$ such that
	\begin{align}
		\| \underline{\phi_h^n}(t) \|_{L^2(\Gamma_h(t))} &\leq C \| \phi_h^n \|_{L^2(\Gamma_h^n)}, \label{timenorm1}\\
		\| \gradgh \underline{\phi_h^n}(t) \|_{L^2(\Gamma_h(t))} &\leq C \| \gradgh \phi_h^n \|_{L^2(\Gamma_h^n)}, \label{timenorm2}
	\end{align}
	where $\underline{\phi_h^n}(t)$ is the function on $\Gamma_h(t)$ with the same nodal values as $\phi_h^n$.
	In this notation $\underline{\phi_h^n} = \underline{\phi_h^n}(t_{n-1})$.
\end{lemma}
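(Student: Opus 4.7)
The plan is to exploit the \emph{transport property} of the basis functions. Since $\underline{\phi_h^n}(t)$ is by definition the element of $S_h(t)$ whose nodal coefficients coincide with those of $\phi_h^n$, we may write
\[\underline{\phi_h^n}(t) = \sum_{i=1}^{N_h} c_i \, \phi_i(t),\]
where the coefficients $c_i$ are independent of $t$. Because $\matdev_h \phi_i = 0$ for every nodal basis function, it follows immediately that $\matdev_h \underline{\phi_h^n}(t) = 0$ on $[t_{n-1}, t_n]$. This is the key structural observation that reduces the problem to a transport-type ODE in time.

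Next, I would apply the discrete transport theorem (Proposition \ref{transport3}) with $\eta_h = \zeta_h = \underline{\phi_h^n}$. Using $\matdev_h \underline{\phi_h^n} = 0$, this gives
\[\frac{d}{dt} \|\underline{\phi_h^n}(t)\|_{L^2(\Gamma_h(t))}^2 = g_h\bigl(\underline{\phi_h^n}, \underline{\phi_h^n}\bigr) = \int_{\Gamma_h(t)} |\underline{\phi_h^n}|^2 \, \gradgh \cdot V_h.\]
Since $V_h$ is the Lagrange interpolant of $V$ and $V \in C^1([0,T]; C^{k+1}(\mathbb{R}^3;\mathbb{R}^3))$, standard interpolation estimates together with inverse inequalities yield a bound $\|\gradgh \cdot V_h\|_{L^\infty(\Gamma_h(t))} \leq C$ uniformly in $t$ and $h$, so
\[\frac{d}{dt} \|\underline{\phi_h^n}(t)\|_{L^2(\Gamma_h(t))}^2 \leq C \|\underline{\phi_h^n}(t)\|_{L^2(\Gamma_h(t))}^2.\]
Integrating this differential inequality backwards from $t_n$ to $t \in [t_{n-1}, t_n]$ and applying Gronwall gives
\[\|\underline{\phi_h^n}(t)\|_{L^2(\Gamma_h(t))}^2 \leq e^{C(t_n - t)} \|\phi_h^n\|_{L^2(\Gamma_h^n)}^2 \leq e^{C\tau} \|\phi_h^n\|_{L^2(\Gamma_h^n)}^2,\]
and for $\tau$ sufficiently small this yields \eqref{timenorm1}.

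For \eqref{timenorm2} the argument is analogous, but using the $a_h$-transport identity in Proposition \ref{transport3}:
\[\frac{d}{dt} \|\gradgh \underline{\phi_h^n}(t)\|_{L^2(\Gamma_h(t))}^2 = b_h\bigl(\underline{\phi_h^n}, \underline{\phi_h^n}\bigr) = \int_{\Gamma_h(t)} \mathbf{B}_h(V_h) \gradgh \underline{\phi_h^n} \cdot \gradgh \underline{\phi_h^n},\]
again using $\matdev_h \underline{\phi_h^n} = 0$. Since $\mathbf{B}_h(V_h)$ is bounded in $L^\infty$ uniformly in $t, h$ (by the same interpolation estimate on $\gradgh V_h$), Gronwall's inequality applied backwards from $t_n$ to $t$ finishes the proof.

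The only mildly delicate step is the uniform $L^\infty$ bound on $\gradgh V_h$ and $\gradgh \cdot V_h$: although $V_h$ is not the \emph{exact} velocity of $\Gamma_h(t)$, its nodal interpolation character together with the quasi-uniformity of the triangulation and the $C^{k+1}$ regularity of $V$ supplies a uniform bound through standard interpolation estimates on each curved element. Once this is in hand, everything else is a straightforward Gronwall argument.
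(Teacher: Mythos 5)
Your approach --- exploit the transport property $\matdev_h \underline{\phi_h^n}(t) = 0$, invoke the discrete transport theorem (Proposition~\ref{transport3}), and close with Gr\"onwall, using that $V_h$ is the Lagrange interpolant of the smooth field $V$ so that $\gradgh\cdot V_h$ and $\mathbf{B}_h(V_h)$ are bounded in $L^\infty$ uniformly in $h$ and $t$ --- is correct and is in the same spirit as the argument in the cited source \cite{dziuk2012fully} (the present paper gives no proof, only the citation). There is, however, one small sign error in the Gr\"onwall step that you should repair. You write $\frac{d}{dt}\|\underline{\phi_h^n}(t)\|_{L^2(\Gamma_h(t))}^2 \leq C \|\underline{\phi_h^n}(t)\|_{L^2(\Gamma_h(t))}^2$ and then claim that ``integrating backwards from $t_n$'' yields $\|\underline{\phi_h^n}(t)\|^2 \leq e^{C(t_n - t)}\|\phi_h^n\|^2$; but from the one-sided bound $f' \leq Cf$ one only obtains $f(t_n) \leq e^{C(t_n-t)} f(t)$, which is the wrong direction for the lemma. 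What you actually need is $f' \geq -Cf$, whence $\frac{d}{dt}\bigl(e^{Ct}f(t)\bigr) \geq 0$ and hence $f(t) \leq e^{C(t_n-t)} f(t_n)$. Fortunately, since $|g_h(\underline{\phi_h^n},\underline{\phi_h^n})| \leq \|\gradgh\cdot V_h\|_{L^\infty(\Gamma_h(t))} \, \|\underline{\phi_h^n}\|_{L^2(\Gamma_h(t))}^2$ you in fact have the two-sided bound $|f'| \leq Cf$, so the repair is immediate; the same remark applies to the $a_h$-version. Finally, note that with this Gr\"onwall route the constant $e^{C(t_n-t)}\leq e^{CT}$ is controlled for all admissible $\tau$, so the hypothesis that $\tau$ be ``sufficiently small'' is not actually used in your argument.
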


\begin{lemma}
	For $\zeta_h^n, \eta_h^n \in S_h^n$ and sufficiently small $\tau$ we have
	\begin{align}
		|m_h(\zeta_h^n, \eta_h^n) - m_h(\underline{\zeta_h^n}, \underline{\eta_h^n})| &\leq C \tau \| \zeta_h^n \|_{L^2(\Gamma_h^n)} \| \eta_h^n \|_{L^2(\Gamma_h^n)}, \label{timeperturb1}\\
		|a_h(\zeta_h^n, \eta_h^n) - a_h(\underline{\zeta_h^n}, \underline{\eta_h^n})| &\leq  C \tau \|\gradgh \zeta_h^n \|_{L^2(\Gamma_h^n)} \|\gradgh \eta_h^n \|_{L^2(\Gamma_h^n)}, \label{timeperturb2}
	\end{align}
	where $C$ denotes a constant independent of $\tau, h$.
\end{lemma}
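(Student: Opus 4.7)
The plan is to view $\underline{\zeta_h^n}$ and $\underline{\eta_h^n}$ as instances at time $t_{n-1}$ of functions $\underline{\zeta_h^n}(t), \underline{\eta_h^n}(t) \in S_h(t)$ defined on $[t_{n-1},t_n]$ by taking, for each $t$, the element of $S_h(t)$ with the same nodal values as $\zeta_h^n$ and $\eta_h^n$. Since the $i$th nodal basis function satisfies $\matdev_h \phi_i = 0$, and these lifted functions are constant-in-time linear combinations of nodal basis functions, we have
\[\matdev_h \underline{\zeta_h^n}(t) = 0, \qquad \matdev_h \underline{\eta_h^n}(t) = 0 \quad \text{for all } t \in [t_{n-1}, t_n].\]
In particular $\underline{\zeta_h^n}(t_n) = \zeta_h^n$ and $\underline{\eta_h^n}(t_n) = \eta_h^n$, while $\underline{\zeta_h^n}(t_{n-1}) = \underline{\zeta_h^n}$ and similarly for $\eta_h^n$.

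The next step is to apply the discrete transport theorem, Proposition \ref{transport3}, to $m_h$ and $a_h$ applied to these functions. Since the material derivatives vanish, this gives
\[\frac{d}{dt} m_h\bigl(\underline{\zeta_h^n}(t), \underline{\eta_h^n}(t)\bigr) = g_h\bigl(\underline{\zeta_h^n}(t), \underline{\eta_h^n}(t)\bigr),\]
\[\frac{d}{dt} a_h\bigl(\underline{\zeta_h^n}(t), \underline{\eta_h^n}(t)\bigr) = b_h\bigl(\underline{\zeta_h^n}(t), \underline{\eta_h^n}(t)\bigr).\]
Integrating each identity on $[t_{n-1},t_n]$ then rewrites the two differences of bilinear forms as a time integral of $g_h$ (respectively $b_h$) applied to the interpolated functions.

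The bounds then follow by estimating these integrands. Since $V \in C^1([0,T];C^{k+1}(\mathbb{R}^3;\mathbb{R}^3))$ with bound $C_V$, and $V_h$ is the Lagrange interpolant of $V$, the quantities $\gradgh \cdot V_h$ and $\mathbf{B}_h(V_h)$ are uniformly bounded (in $t$ and $h$, for $h$ small enough) on each element by standard interpolation estimates. Thus $|g_h(\underline{\zeta_h^n}(t), \underline{\eta_h^n}(t))| \leq C \|\underline{\zeta_h^n}(t)\|_{L^2(\Gamma_h(t))} \|\underline{\eta_h^n}(t)\|_{L^2(\Gamma_h(t))}$, with an analogous bound for $b_h$ in terms of the gradients. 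Applying the norm equivalences \eqref{timenorm1} and \eqref{timenorm2} to transfer these norms back to $\Gamma_h^n$, and using that the time interval has length $\tau$, yields the two claimed estimates.

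The main obstacle is really just the careful bookkeeping to justify $\matdev_h \underline{\zeta_h^n}(t) = 0$ — this relies on the transport property of nodal basis functions and the fact that our interpolated functions have constant nodal coefficients — after which the proof reduces to the discrete transport theorem plus the cited lemma and uniform bounds on the discrete velocity field. No new technical machinery is needed beyond what is already assembled in the preliminaries.
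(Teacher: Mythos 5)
Your proof is correct, and it reconstructs precisely the argument that the paper delegates to \cite{dziuk2012fully}: the paper's ``proof'' is merely a citation to Lemma 3.6 and 3.7 of that reference, which establish the claim by exactly the device you describe --- extend $\underline{\zeta_h^n}$ and $\underline{\eta_h^n}$ to the time-constant-coefficient families $\underline{\zeta_h^n}(t),\underline{\eta_h^n}(t)\in S_h(t)$, note these have vanishing discrete material derivative by the transport property of the nodal basis, apply the discrete transport theorem (Proposition \ref{transport3}) to reduce the difference to a time integral of $g_h$ (respectively $b_h$), bound the integrands using uniform $L^\infty$ control of $\gradgh\cdot V_h$ and $\mathbf{B}_h(V_h)$ coming from the smoothness of $V$ and interpolation estimates, and then pull the norms back to $\Gamma_h^n$ via \eqref{timenorm1}, \eqref{timenorm2}. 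There is no gap; you have simply made explicit what the paper leaves to the reference.
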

\begin{proof}
	\eqref{timeperturb1} is shown in \cite{dziuk2012fully} by combining Lemma 3.6 and Lemma 3.7, and the previous result.
	The proof for \eqref{timeperturb2} follows by the same logic. \qed
\end{proof}
\begin{corollary}
	Let $\tau$ be sufficiently small, then for $\phi_h^n \in S_h^n$ we have that
	\begin{align}
		\|\phi_h^n\|_{L^2(\Gamma_h^n)} &\leq C \|\underline{\phi_h^n}\|_{L^2(\Gamma_h^{n-1})}, \label{timenorm3}\\
		\|\gradgh \phi_h^n\|_{L^2(\Gamma_h^n)} &\leq C \|\gradgh \underline{\phi_h^n}\|_{L^2(\Gamma_h^{n-1})}, \label{timenorm4},
	\end{align}
	where $C$ denotes a constant independent of $\tau, h$.
\end{corollary}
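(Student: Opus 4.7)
The plan is to use the perturbation estimates \eqref{timeperturb1} and \eqref{timeperturb2} from the preceding lemma to compare the squared norms at times $t_n$ and $t_{n-1}$, treating the difference as a small $O(\tau)$ term which can be absorbed into the left-hand side for $\tau$ small enough.

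Concretely, for the first bound I would take $\zeta_h^n = \eta_h^n = \phi_h^n$ in \eqref{timeperturb1}, so that $m_h(\phi_h^n,\phi_h^n) = \|\phi_h^n\|_{L^2(\Gamma_h^n)}^2$ and $m_h(\underline{\phi_h^n}, \underline{\phi_h^n}) = \|\underline{\phi_h^n}\|_{L^2(\Gamma_h^{n-1})}^2$. The lemma then yields
\[
\|\phi_h^n\|_{L^2(\Gamma_h^n)}^2 \leq \|\underline{\phi_h^n}\|_{L^2(\Gamma_h^{n-1})}^2 + C\tau \|\phi_h^n\|_{L^2(\Gamma_h^n)}^2.
\]
Choosing $\tau$ small enough that $C\tau \leq \tfrac{1}{2}$ (which is a condition independent of $h$) lets me absorb the last term on the left, giving $\|\phi_h^n\|_{L^2(\Gamma_h^n)}^2 \leq 2\|\underline{\phi_h^n}\|_{L^2(\Gamma_h^{n-1})}^2$, which is \eqref{timenorm3} after taking square roots and renaming the constant.

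For \eqref{timenorm4}, the argument is identical, replacing $m_h$ by $a_h$ and invoking \eqref{timeperturb2} instead: write $\|\gradgh \phi_h^n\|_{L^2(\Gamma_h^n)}^2 = a_h(\phi_h^n, \phi_h^n)$ and $a_h(\underline{\phi_h^n}, \underline{\phi_h^n}) = \|\gradgh \underline{\phi_h^n}\|_{L^2(\Gamma_h^{n-1})}^2$, apply \eqref{timeperturb2}, and absorb the $C\tau$ term on the left for $\tau$ sufficiently small.

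There is essentially no obstacle here; this is a direct corollary in the bookkeeping sense. The only care needed is to verify that the smallness condition on $\tau$ imposed here is compatible with (and independent of the same type as) the smallness assumed in the preceding lemma, so that a single smallness threshold on $\tau$ suffices for both bounds simultaneously. Since the constants $C$ in \eqref{timeperturb1} and \eqref{timeperturb2} depend only on $V$ and the surface geometry, not on $h$ or on $\phi_h^n$, the threshold on $\tau$ is uniform, as required for the rest of the analysis.
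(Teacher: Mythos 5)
Your proof is correct and follows exactly the same route as the paper: apply \eqref{timeperturb1} (resp.\ \eqref{timeperturb2}) with $\zeta_h^n = \eta_h^n = \phi_h^n$ to bound the difference of squared norms by $C\tau\|\phi_h^n\|^2$, then absorb that term on the left for $\tau$ sufficiently small. The extra remark on uniformity of the $\tau$-threshold is a reasonable sanity check but adds nothing beyond what the paper implicitly assumes.
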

\begin{proof}
	For \eqref{timenorm3} this follows by writing
	$$\|\phi_h^n\|_{L^2(\Gamma_h^n)}^2 = \|\underline{\phi_h^n}\|_{L^2(\Gamma_h^{n-1})}^2 + \left[ \|\phi_h^n\|_{L^2(\Gamma_h^n)}^2 - \|\underline{\phi_h^n}\|_{L^2(\Gamma_h^{n-1})}^2 \right],$$
	and bounding the term in square brackets by using \eqref{timeperturb1} with a sufficiently small $\tau$.
	\eqref{timenorm4} follows similarly. \qed
\end{proof}

Now we define a time projection to be used in showing existence of a solution.
\begin{definition}
	For $z_h^{n-1} \in S_h^{n-1}$ we define $z_{h,+}^{n-1} \in S_h^n$ to be the unique solution of
	\begin{align}
		m_h(t_n;z_{h,+}^{n-1}, \phi_h^n) = m_h(t_{n-1};z_{h}^{n-1}, \underline{\phi}_h^n), \label{timeproj}
	\end{align}
	for all $\phi_h^n \in S_h^n$.
\end{definition}

This is clearly well defined by the Lax-Milgram theorem.
This time projection has the following properties.
\begin{lemma}
	For $z_h^{n-1} \in S_h^{n-1}$ and $z_{h,+}^{n-1}$ as defined above we have:
	\begin{gather}
			\|z_{h,+}^{n-1}\|_{L^2(\Gamma_h^n)} \leq C \|z_h^{n-1}\|_{L^2(\Gamma_h^{n-1})}, \label{timeproj1}\\
			\|\overline{z_h^{n-1}} -  z_{h,+}^{n-1}\|_{L^2(\Gamma_h^n)} \leq C{\tau} \|z_h^{n-1}\|_{L^2(\Gamma_h^{n-1})}, \label{timeproj2}
		\end{gather}
	where $C$ denotes a constant independent of $\tau,h$.
\end{lemma}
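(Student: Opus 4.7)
The plan is to obtain both bounds by testing the defining equation \eqref{timeproj} with a carefully chosen $\phi_h^n \in S_h^n$, and then moving between the two time-discrete surfaces using the norm-equivalence estimates \eqref{timenorm1}--\eqref{timenorm4} and the perturbation estimate \eqref{timeperturb1}. The crucial structural observation is that the overbar and underbar operations are mutually inverse on nodal values, so $\underline{\overline{z_h^{n-1}}} = z_h^{n-1}$; this lets the defining equation be rewritten in the form required to apply \eqref{timeperturb1}.

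For the stability bound \eqref{timeproj1} I would test \eqref{timeproj} with $\phi_h^n = z_{h,+}^{n-1}$, giving
\[\|z_{h,+}^{n-1}\|_{L^2(\Gamma_h^n)}^2 = m_h\bigl(z_h^{n-1}, \underline{z_{h,+}^{n-1}}\bigr) \leq \|z_h^{n-1}\|_{L^2(\Gamma_h^{n-1})} \, \bigl\|\underline{z_{h,+}^{n-1}}\bigr\|_{L^2(\Gamma_h^{n-1})}.\]
Then \eqref{timenorm1} applied to $z_{h,+}^{n-1} \in S_h^n$ controls $\bigl\|\underline{z_{h,+}^{n-1}}\bigr\|_{L^2(\Gamma_h^{n-1})}$ by $\|z_{h,+}^{n-1}\|_{L^2(\Gamma_h^n)}$ up to a constant independent of $\tau,h$, and dividing through gives \eqref{timeproj1}.

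For the closeness bound \eqref{timeproj2} I would test \eqref{timeproj} against a generic $\phi_h^n \in S_h^n$ and use $\underline{\overline{z_h^{n-1}}} = z_h^{n-1}$ to write
\[m_h\bigl(\overline{z_h^{n-1}} - z_{h,+}^{n-1}, \phi_h^n\bigr) = m_h\bigl(\overline{z_h^{n-1}}, \phi_h^n\bigr) - m_h\bigl(\underline{\overline{z_h^{n-1}}}, \underline{\phi_h^n}\bigr).\]
Applying \eqref{timeperturb1} to the right-hand side bounds this by $C\tau \|\overline{z_h^{n-1}}\|_{L^2(\Gamma_h^n)} \|\phi_h^n\|_{L^2(\Gamma_h^n)}$, and \eqref{timenorm3} used with $\underline{\overline{z_h^{n-1}}} = z_h^{n-1}$ converts the first factor into $C\|z_h^{n-1}\|_{L^2(\Gamma_h^{n-1})}$. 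Choosing $\phi_h^n = \overline{z_h^{n-1}} - z_{h,+}^{n-1}$ and dividing yields \eqref{timeproj2}.

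Neither step presents a serious obstacle; the only real pitfall is bookkeeping with the $\overline{(\cdot)}$ and $\underline{(\cdot)}$ operations and making sure the norm-equivalence results \eqref{timenorm1}--\eqref{timenorm4} are applied at the correct time level. The closeness estimate is the slightly more delicate of the two because one must recognise that $\overline{z_h^{n-1}} \in S_h^n$ is precisely the function whose underbar returns $z_h^{n-1}$, which is what unlocks \eqref{timeperturb1}.
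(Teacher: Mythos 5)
Your proof is correct and follows essentially the same route as the paper: for \eqref{timeproj1} testing \eqref{timeproj} with $z_{h,+}^{n-1}$ and invoking \eqref{timenorm1}, and for \eqref{timeproj2} testing with $\overline{z_h^{n-1}}-z_{h,+}^{n-1}$ and applying \eqref{timeperturb1} after rewriting the right-hand side via $\underline{\overline{z_h^{n-1}}}=z_h^{n-1}$. The extra step you spell out, using \eqref{timenorm3} to trade $\|\overline{z_h^{n-1}}\|_{L^2(\Gamma_h^n)}$ for $\|z_h^{n-1}\|_{L^2(\Gamma_h^{n-1})}$, is implicit in the paper and harmless to make explicit.
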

\begin{proof}
	To show \eqref{timeproj1} we test \eqref{timeproj} with $z_{h,+}^{n-1}$ and use \eqref{timenorm1} to see that
	$$ \|z_{h,+}^{n-1}\|_{L^2(\Gamma_h^n)}^2 \leq \|\underline{z_{h,+}^{n-1}}\|_{L^2(\Gamma_h^{n-1})} \|z_h^{n-1}\|_{L^2(\Gamma_h^{n-1})} \leq C\|z_{h,+}^{n-1}\|_{L^2(\Gamma_h^n)} \|z_h^{n-1}\|_{L^2(\Gamma_h^{n-1})},$$
	from which the result clearly holds.
 To show \eqref{timeproj2} we note that
	$$ m_h(t_n;\overline{z_h^{n-1}} -  z_{h,+}^{n-1}, \phi_h^n) = m_h(t_n;\overline{z_h^{n-1}},\phi_h^n) - m_h(t_{n-1};z_h^{n-1}, \underline{\phi_h^n}),$$ and hence taking $\phi_h^n = \overline{z_h^{n-1}} -  z_{h,+}^{n-1}$ and using \eqref{timeperturb1} one obtains the result. \qed
\end{proof}
Similarly we require an appropriate notion of a discrete inverse Laplacian.
Given $z_h^n \in S_h^n$ such that $m_h(z_h^n,1)= 0$, we define the discrete inverse Laplacian to be the unique solution, $\invshn{n} z_h^n$, of
\[ a_h(\invshn{n} z_h^n, \phi_h^n) = m_h(z_h^n, \phi_h^n), \]
such that $m_h(\invshn{n} z_h^n, 1) = 0$.
From Poincar\'e's inequality it is clear to see that this is well defined, and moreover that
\[ \normshn{z_h^n}{n} := \| \gradgh \invshn{n} z_h^n \|_{L^2(\Gamma_h^n)} \leq C \| z_h^n \|_{L^2(\Gamma_h^n)}, \]
for a constant $C$ independent of $n, h$.\\

By using the projection \eqref{timeproj} and the discrete inverse Laplacian we find that \eqref{fdiscfecheqn1} becomes
$$ \frac{1}{\tau}a_h(\invshn{n}(U_h^n - U_{h,+}^{n-1}, \phi_h^n)) + a_h(W_h^n, \phi_h^n) = 0.$$
This implies that $\frac{1}{\tau} \invshn{n}(U_h^n - U_{h,+}^{n-1}) = \lambda^n - W_h^n$ for some constant $\lambda^n$ depending on $n$.
By using \eqref{fdiscfecheqn2} it is clear that
$$ \lambda^n = \frac{1}{\varepsilon} \mval{F_1'(U_h^n)}{\Gamma_h^n} - \frac{\theta}{\varepsilon}\mval{U_h^n}{\Gamma_h^n}.$$
Hence we find that \eqref{fdiscfecheqn1}, \eqref{fdiscfecheqn2} may be written as a single equation
\begin{multline}
	\label{discdecoupledeqn}
	\varepsilon a_h \left( U_h^{n}, \phi_h^n \right) + \frac{\theta}{2 \varepsilon}m_h\left(F_1'\left(U_h^n\right), \phi_h^n - \mval{\phi_h^n}{\Gamma_h^n}\right) - \frac{\theta}{\varepsilon}m_h\left(U_h^n, \phi_h^n - \mval{\phi_h^n}{\Gamma_h^n}\right)\\
	+ \frac{1}{\tau} m_h \left( \invshn{n}(U_h^n - U_{h,+}^{n-1}), \phi_h^n \right) = 0.
\end{multline}

Motivated by this observation, we define a functional, $J_h^n : D^n \rightarrow \mathbb{R}$, for
\[D^n := \left\{ z_h^n \in S_h^n \mid m_h(z_h^n,1) = m_h(U_h^{n-1},1) \right\}\]
given by
\[J_h^n (z_h^n) := \frac{1}{\varepsilon} m_h \left( F(z_h^n), 1 \right) + \frac{\varepsilon}{2} \|\gradgh z_h^n \|_{L^2(\Gamma_h)}^2 + \frac{1}{2 \tau} \normshn{z_h^n - U_{h,+}^{n-1}}{n}^2. \]
$D^n$ is clearly a non-empty, affine subspace of $S_h$, and thus finite dimensional.
Our existence result now follows by using similar arguments to Theorem 2.1 of \cite{copetti1992numerical}.

\begin{lemma}
	\label{fullyimp existence}
	Let $U_{h,0} \in S_h^0$.
	Then there exists a solution pair $(U_h^{n},W_h^{n})$ solving \eqref{fdiscfecheqn1},\eqref{fdiscfecheqn2}.
\end{lemma}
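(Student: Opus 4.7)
The plan is to follow the strategy of Theorem 2.1 in \cite{copetti1992numerical}, as suggested in the paragraph above the statement, by recognising \eqref{discdecoupledeqn} as the Euler--Lagrange equation associated to the minimisation of $J_h^n$ on the finite-dimensional affine subspace $D^n$. Existence of the decoupled solution $U_h^n$ then follows by a direct method, after which $W_h^n$ is recovered algebraically.

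First I would check that $D^n$ is non-empty and that the problem is well-posed: testing \eqref{timeproj} with the constant function $1 \in S_h^n$ (noting $\underline{1} = 1$) gives $m_h(U_{h,+}^{n-1},1) = m_h(U_h^{n-1},1)$, so $U_{h,+}^{n-1} \in D^n$ and the difference $U_h^n - U_{h,+}^{n-1}$ has zero mean on $\Gamma_h^n$, making $\invshn{n}(U_h^n - U_{h,+}^{n-1})$ well-defined for any $U_h^n \in D^n$. Next I would verify the three ingredients of the direct method on $D^n$: (i) continuity of $J_h^n$, which follows from $F \in C^2(\mathbb{R})$ and finite-dimensionality; (ii) a lower bound, using $F \geq \beta$ to obtain $\tfrac{1}{\varepsilon}m_h(F(z_h^n),1) \geq \tfrac{\beta}{\varepsilon}|\Gamma_h^n|$; (iii) coercivity on $D^n$, using that the mean $\mval{z_h^n}{\Gamma_h^n}$ is fixed on $D^n$, so Poincaré's inequality gives
\[
\|z_h^n\|_{L^2(\Gamma_h^n)} \leq C\bigl(\|\gradgh z_h^n\|_{L^2(\Gamma_h^n)} + 1\bigr),
\]
and hence $J_h^n(z_h^n) \to \infty$ as $\|z_h^n\|_{H^1(\Gamma_h^n)} \to \infty$ within $D^n$. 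Because $D^n$ is a closed affine subspace of the finite-dimensional space $S_h^n$, a minimiser $U_h^n \in D^n$ exists.

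Next I would compute the first variation. Admissible directions are $\phi_h^n \in S_h^n$ with $\mval{\phi_h^n}{\Gamma_h^n}=0$, so that $U_h^n + s\phi_h^n \in D^n$. Differentiating in $s$ at $0$, and using the symmetry relation
\[
\tfrac{1}{2}\,\tfrac{d}{ds}\Big|_{s=0} \normshn{U_h^n + s\phi_h^n - U_{h,+}^{n-1}}{n}^2 = m_h\!\left(\invshn{n}(U_h^n - U_{h,+}^{n-1}),\phi_h^n\right),
\]
which holds for mean-zero $\phi_h^n$ by definition of $\invshn{n}$ and symmetry of $a_h$, yields \eqref{discdecoupledeqn} for all mean-zero $\phi_h^n$. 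Since the bilinear form on the left of \eqref{discdecoupledeqn} vanishes identically when $\phi_h^n$ is replaced by a constant (each term explicitly appears applied to $\phi_h^n - \mval{\phi_h^n}{\Gamma_h^n}$, or vanishes on constants in the case of $a_h$ and $\invshn{n}$ of a mean-zero function), we obtain \eqref{discdecoupledeqn} for all $\phi_h^n \in S_h^n$.

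Finally I would recover $W_h^n$ by reversing the derivation of \eqref{discdecoupledeqn}: set
\[
W_h^n := \lambda^n - \tfrac{1}{\tau}\,\invshn{n}(U_h^n - U_{h,+}^{n-1}), \qquad \lambda^n := \tfrac{1}{\varepsilon}\mval{F_1'(U_h^n)}{\Gamma_h^n} - \tfrac{\theta}{\varepsilon}\mval{U_h^n}{\Gamma_h^n}.
\]
The defining property of $\invshn{n}$ together with \eqref{timeproj} gives \eqref{fdiscfecheqn1} (rewritten as \eqref{fdiscfecheqn3}), while \eqref{discdecoupledeqn} and the choice of $\lambda^n$ combine to give \eqref{fdiscfecheqn2} — one checks both equations first on mean-zero test functions and then on the constants. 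The main obstacle I anticipate is largely bookkeeping: ensuring the mean-zero projections in \eqref{discdecoupledeqn}, the symmetry of $\invshn{n}$, and the appropriate choice of $\lambda^n$ line up so that the two equations \eqref{fdiscfecheqn1}--\eqref{fdiscfecheqn2} are recovered simultaneously for arbitrary test functions in $S_h^n$.
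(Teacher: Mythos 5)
Your proposal is correct and follows essentially the same route as the paper: minimise $J_h^n$ over the finite-dimensional affine set $D^n$ by the direct method, derive the Euler--Lagrange equation with a Lagrange multiplier $\lambda^n$, and recover $W_h^n$ algebraically via the discrete inverse Laplacian. You supply slightly more detail than the paper in verifying non-emptiness of $D^n$, coercivity of $J_h^n$, and the passage from mean-zero to arbitrary test functions, but the underlying argument is the same.
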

\begin{proof}
	We firstly note that for $z_h^n \in D^n$,
	$$ J_h^n(z_h^n) \geq \beta |\Gamma_h^n| , $$
	where $\beta$ is the lower bound on $F$.
	Hence $\inf_{z_h^n \in D^n} J_h^{n}(z_h^n) > -\infty$, and we can consider a minimising sequence $(z_{h,m}^n)_m \subset D^n$.
	It is straightforward to see such a sequence is bounded in $H^1(\Gamma_h^n)$.
    Since $D^n$ is finite dimensional there exists a subsequence of $(z_{h,m}^n)_m$ (for which we still keep the subscript $m$) converging to a minimiser, $U^n \in D^n$, of $J_h^{n}$. 
	By continuity of $J_h^{n}$ we have that
	$$ \lim_{m \rightarrow \infty} J_h^{n}(z_{h,m}^n) = J_h^{n}(U^n).$$
	Moreover, by considering the Euler-Lagrange equation associated to this minimisation problem, we see that $U^n$ solves, for all $\phi_h^n \in S_h^n$,
\begin{multline*}
    \varepsilon a_h(U^n, \phi_h^n) + \frac{1}{\varepsilon}m_h(F_1'(U^n), \phi_h^n) - \frac{\theta}{\varepsilon} m_h(U^n, \phi_h^n) + \frac{1}{\tau} m_h(\invshn{n} (U^n - U_{h,+}^{n-1}), \phi_h^n)\\
    - \lambda^n m_h(1, \phi_h^n) =0,
\end{multline*}
    where
    \[\lambda^n = \frac{1}{\varepsilon} \mval{F_1'(U^n)}{\Gamma_h^n} - \frac{\theta}{\varepsilon}\mval{U^n}{\Gamma_h^n}\]
	is a Lagrange multiplier associated to the constraint defining $D^n$.
	This is precisely the decoupled equation \eqref{discdecoupledeqn}, and hence setting
	$$U_h^{n} := U^n, \quad W_h^n := \lambda^n - \frac{1}{\tau} \invshn{n}(U^n - U_{h,+}^{n-1})$$ we obtain a solution pair. \qed
\end{proof}

\subsection{Stability}
We now discuss difficulties in showing the scheme \eqref{fdiscfecheqn1}, \eqref{fdiscfecheqn2} is numerically stable.
As is standard in the analysis of the Cahn-Hilliard equation this is expressed in terms of the (discrete) Ginzburg-Landau functional
$$ \Echh[U_h;t] = \int_{\Gamma_h(t)} \frac{\varepsilon}{2} |\gradgh U_h|^2 + \frac{1}{\varepsilon} F(U_h).$$
We will omit the $t$ argument, as it will be clear from context.

\begin{lemma}
	\label{implicitstability}
	Let $u_0 \in H^1(\Gamma_0)$, $U_{h,0} \in S_h^0$ be such that \eqref{discinitialdata} holds,
	and $(U_h^n, W_h^n) \in S_h^n \times S_h^n$ be the unique pair solving \eqref{fdiscfecheqn1}, \eqref{fdiscfecheqn2}.
	Then for $1 \leq N \leq \lfloor \frac{T}{\tau} \rfloor$,
	\begin{multline}
		\widetilde{\Echh}[U_h^N] + \frac{\tau}{2} \sum_{n=1}^N \| \gradgh W_h^n \|_{L^2(\Gamma_h^n)}^2 + \frac{\tau^2}{2}  \sum_{n=1}^N \| \gradgh \matdevtau U_h^n \|_{L^2(\Gamma_h^n)}^2 \leq CT\\
  +C \tau \widetilde{\Echh}[U_{h,0}]+ C\tau  \sum_{n=1}^{N} \left( \widetilde{\Echh}[U_h^{n}] + \left(\widetilde{\Echh}[U_h^n]\right)^q \right)
		\label{stabilitybound}
	\end{multline}
	where $C$ denotes a constant independent of $h,\tau$ but may depend on $\varepsilon, \theta, V,T, u_0$.
\end{lemma}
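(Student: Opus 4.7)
The plan is to derive a per-step discrete energy inequality by testing the two scheme equations in the standard Cahn--Hilliard fashion, then control all surface-perturbation corrections using the estimates of Section \ref{section:prelim}, and finally sum in $n$. First, I test \eqref{fdiscfecheqn3} with $\phi_h^n = W_h^n$ and \eqref{fdiscfecheqn2} with $\phi_h^n = U_h^n - \overline{U_h^{n-1}} = \tau \matdevtau U_h^n$. Adding the two identities gives
\begin{equation*}
\tau \|\gradgh W_h^n\|_{L^2(\Gamma_h^n)}^2 + \varepsilon \tau\, a_h(U_h^n, \matdevtau U_h^n) + \tfrac{\tau}{\varepsilon} m_h(F'(U_h^n), \matdevtau U_h^n) = m_h(U_h^{n-1}, \underline{W_h^n}) - m_h(\overline{U_h^{n-1}}, W_h^n).
\end{equation*}
Convexity of $F_1$ yields $m_h(F_1'(U_h^n), U_h^n - \overline{U_h^{n-1}}) \geq m_h(F_1(U_h^n) - F_1(\overline{U_h^{n-1}}), 1)$, and the identity $2a(a-b) = a^2 - b^2 + (a-b)^2$ applied to the Dirichlet term and to $-\tfrac{\theta}{2}u^2$ rewrites the left-hand side as $(\Echh[U_h^n] - \Echh[\overline{U_h^{n-1}}])|_{\Gamma_h^n} + \tfrac{\tau^2 \varepsilon}{2}\|\gradgh \matdevtau U_h^n\|^2 - \tfrac{\tau^2 \theta}{2\varepsilon}\|\matdevtau U_h^n\|^2$.

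Next, I transport the energy from $\Gamma_h^n$ back to $\Gamma_h^{n-1}$. Using the transport property $\matdev_h \phi_i = 0$ of the nodal basis functions together with Proposition \ref{transport3} and the fundamental theorem of calculus gives
\[\Echh[\overline{U_h^{n-1}}]\big|_{\Gamma_h^n} - \Echh[U_h^{n-1}]\big|_{\Gamma_h^{n-1}} = \int_{t_{n-1}}^{t_n} \int_{\Gamma_h(s)} \left[\tfrac{\varepsilon}{2} \mathbf{B}_h(V_h) \gradgh \tilde{U}_h^{n-1}(s) \cdot \gradgh \tilde{U}_h^{n-1}(s) + \tfrac{1}{\varepsilon} F(\tilde{U}_h^{n-1}(s))\, \gradgh \cdot V_h\right] ds,\]
where $\tilde{U}_h^{n-1}(s)$ extends $U_h^{n-1}$ to $\Gamma_h(s)$ by the transport property. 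The uniform bounds on $V_h$ and $\mathbf{B}_h(V_h)$, combined with the polynomial growth bound $|F(r)| \leq C(|r|^{q+1} + 1)$ and the Sobolev embedding $H^1(\Gamma_h(s)) \hookrightarrow L^p(\Gamma_h(s))$ for any finite $p$ in two dimensions, bound this difference by $C\tau(1 + \widetilde{\Echh}[U_h^{n-1}] + \widetilde{\Echh}[U_h^{n-1}]^q)$. Similarly, the mismatch $|m_h(U_h^{n-1}, \underline{W_h^n}) - m_h(\overline{U_h^{n-1}}, W_h^n)|$ is dominated by $C\tau \|U_h^{n-1}\|_{L^2(\Gamma_h^{n-1})} \|W_h^n\|_{L^2(\Gamma_h^n)}$ via \eqref{timeperturb1}; for $\|W_h^n\|_{L^2}$ I use Poincar\'e on the mean-zero part together with the identity $\mval{W_h^n}{\Gamma_h^n} = (1/\varepsilon)\mval{F'(U_h^n)}{\Gamma_h^n}$ (from testing \eqref{fdiscfecheqn2} with $1$), and the same growth-plus-Sobolev argument produces $\|W_h^n\|_{L^2}^2 \leq C\|\gradgh W_h^n\|^2 + C(\widetilde{\Echh}[U_h^n] + \widetilde{\Echh}[U_h^n]^q + 1)$. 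Mass conservation (testing \eqref{fdiscfecheqn3} with $1$) controls the mean of $U_h^{n-1}$, so $\|U_h^{n-1}\|_{L^2}$ is bounded in terms of $\widetilde{\Echh}[U_h^{n-1}] + 1$.

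Putting this together, Young's inequality absorbs half of $\tau\|\gradgh W_h^n\|^2$ into the left-hand side, and Poincar\'e together with the mass-conservation bound on the mean of $\matdevtau U_h^n$ absorbs $\tfrac{\tau^2 \theta}{2\varepsilon}\|\matdevtau U_h^n\|^2$ into $\tfrac{\tau^2 \varepsilon}{2}\|\gradgh \matdevtau U_h^n\|^2$. Summing $n = 1$ to $N$ and using $N \tau \leq T$ along with \eqref{discinitialdata} to initialise yields \eqref{stabilitybound}. The main obstacle is the transport step: the discrete transport theorem must be applied to the full (non-quadratic) energy, and the polynomial growth of $F_1'$ combined with Sobolev embeddings for surfaces in $\mathbb{R}^3$ is what forces the superlinear term $\widetilde{\Echh}^q$ on the right-hand side. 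This is intrinsic and cannot be removed without stronger hypotheses (e.g.\ globally Lipschitz $F_1'$, which is the case only when $q = 1$).
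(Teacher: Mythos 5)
Your overall strategy is the same as the paper's: test the transport equation with $W_h^n$, test the chemical-potential equation with $U_h^n-\overline{U_h^{n-1}}$, invoke convexity of $F_1$, transport the discrete energy between surfaces, bound $\|W_h^n\|_{L^2}$ via Poincar\'e and the mean-value identity, and use polynomial growth plus $H^1\hookrightarrow L^{2q}$ to produce the $\widetilde{\Echh}^q$ term. Those parts are fine and match the paper's calculations (the paper does exactly this, albeit in matrix-vector notation, and handles the $F_1$-transport term in the same integral fashion).

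However, there is a genuine gap in your treatment of the concave part of the potential. The identity $2a(a-b)=a^2-b^2+(a-b)^2$ applied to $-\tfrac{\theta}{2\varepsilon}m_h(U_h^n,U_h^n-\overline{U_h^{n-1}})$ produces the defect term $+\tfrac{\theta\tau^2}{2\varepsilon}\|\matdevtau U_h^n\|_{L^2(\Gamma_h^n)}^2$ on the right. You propose to absorb this into the left-hand-side term $\tfrac{\varepsilon\tau^2}{2}\|\gradgh\matdevtau U_h^n\|_{L^2(\Gamma_h^n)}^2$ via Poincar\'e on the mean-zero part. That absorption requires $\theta C_P^2\leq\varepsilon^2$ (with $C_P$ the Poincar\'e constant), which fails in the physically relevant regime of small $\varepsilon$; and you cannot instead dump the term on the right, because $\tfrac{\theta\tau^2}{2\varepsilon}\|\matdevtau U_h^n\|^2 = \tfrac{\theta}{2\varepsilon}\|U_h^n-\overline{U_h^{n-1}}\|^2$ has no factor of $\tau$ and would contribute $\mathcal{O}(N)=\mathcal{O}(T/\tau)$ after summing. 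The paper avoids this by testing \eqref{fdiscfecheqn3} (equivalently dotting \eqref{matrixeqn1}) once more, this time with $\tfrac{\theta}{2\varepsilon}(U_h^n-\overline{U_h^{n-1}})$, which expresses the offending quantity as $\tfrac{\theta}{2\varepsilon}a_h(U_h^n-\overline{U_h^{n-1}},W_h^n)$ plus a mass-matrix mismatch. Splitting $U_h^n-\overline{U_h^{n-1}}$ and applying Cauchy--Schwarz and Young then bounds it purely by $\|U_h^n\|^2$, $\|U_h^{n-1}\|^2$, $\|\gradgh U_h^n\|^2$, $\|\gradgh U_h^{n-1}\|^2$ and a small multiple of $\|\gradgh W_h^n\|^2$, all controlled by $\widetilde{\Echh}$ and the already-present $\|\gradgh W_h^n\|^2$ on the left, with no constraint relating $\theta$ and $\varepsilon$. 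Without this scheme-substitution step your argument does not close.
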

\begin{proof}
	Throughout we define $\widetilde{\Echh} = \Echh + C$, where $C$ is sufficiently large so that 
	$$  \widetilde{\Echh}[z_h^n] \geq \frac{\varepsilon}{2}\|\gradgh z_h^n \|_{L^2(\Gamma_h^n)}^2 \geq 0,$$
	for all $z_h^n \in S_h^n$.
	This is possible by the assumptions on $F$, and moreover we can choose $C$ independent of $h, \tau$ by the assumptions on $V$.
	We write the scheme \eqref{fdiscfecheqn1}, \eqref{fdiscfecheqn2} in matrix form as
	\begin{gather}
		M^n \frac{(U^n - U^{n-1})}{\tau} + \frac{(M^n - M^{n-1})}{\tau} U^{n-1} + A^n W^n = 0,\label{matrixeqn1}\\
		M^n W^n = \varepsilon A^n U^n + \frac{1}{\varepsilon}\mathcal{F}_1^{n}(U^n) - \frac{\theta}{\varepsilon}M^n U^n, \label{matrixeqn2}
	\end{gather}
	where $U^n, W^n$ denote the vector of nodal values of $U_h^n, W_h^n$ respectively, and
	\begin{align*}
		(M^n)_{ij} &= m_h(t_n; \phi_i^n, \phi_j^n),\\
		(A^n)_{ij} &= a_h(t_n; \phi_i^n, \phi_j^n),\\
		\mathcal{F}_1^{n}(U^n)_j &= m_h(t_n; F_1'(U_h^n), \phi_j^n),
	\end{align*}
	for $\phi_i^n$ the `$i$'th nodal basis function in $S_h^n$.
        We note that the only nonlinear term here is due to the convex part of the potential, $F_1$, as we have assumed that the concave part of the potential is given by a quadratic.\\
        
	We dot \eqref{matrixeqn1} with $W^n$ and \eqref{matrixeqn2} with $\frac{1}{\tau}(U^n - U^{n-1})$.
	This yields
	\begin{multline}
		U^n \cdot A^n \frac{(U^n - U^{n-1})}{\tau} + \mathcal{F}_1^n(U^n)\cdot \frac{(U^n - U^{n-1})}{\tau\varepsilon} + W^n \cdot A^n W^n\\
  = \frac{\theta}{\tau\varepsilon} U^n \cdot M^n(U^n-U^{n-1})- W^n \cdot \frac{(M^n - M^{n-1})}{\tau} U^{n-1}. \label{discstab1}
	\end{multline}
	It is then straightforward to show that
	\begin{align}
    \begin{aligned}
		U^n \cdot A^n \frac{(U^n - U^{n-1})}{\tau} &= \frac{1}{2 \tau} \left[ U^n \cdot A^{n} U^n - U^{n-1} \cdot A^{n-1} U^{n-1}\right]\\
  &+\frac{1}{2\tau} \left(U^n - U^{n-1}\right) \cdot A^n \left(U^n - U^{n-1}\right)\\
		&+ \frac{1}{2\tau} \left[ U^{n-1} \cdot A^{n-1} U^{n-1} - U^{n-1} \cdot A^n U^{n-1}  \right]
  \end{aligned}\label{discstab2}
	\end{align}
	This can be understood as a fully discrete form of the transport theorem, see \cite{dziuk2012fully} Lemma 3.5.
	By using the convexity of $F_1$, we see that
	$$ \mathcal{F}_1^n(U^n)\cdot (U^n - U^{n-1}) = m_h(F_1'(U_h^n), U_h^n - \overline{U_h^{n-1}}) \geq m_h(F_1(U_h^n) - F_1(\overline{U_h^{n-1}}), 1).$$
	We then rewrite this final term as
	\begin{multline*}
	    m_h(F_1(U_h^n) - F_1(\overline{U_h^{n-1}}), 1) = m_h(F_1(U_h^n), 1) - m_h(F_1(U_h^{n-1}), 1)\\
     + \left[m_h(F_1(U_h^{n-1}), 1) - m_h(F_1(\overline{U_h^{n-1}}), 1) \right],
	\end{multline*}
	where the term in square brackets may be written as
    \begin{align*}
        m_h(F_1(U_h^{n-1}), 1) - m_h(F_1(\overline{U_h^{n-1}}), 1) &= -\int_{t_{n-1}}^{t_n} \frac{d}{dt} m_h(F_1(\overline{U_h^{n-1}}(t)),1)\\
        &= -\int_{t_{n-1}}^{t_n} g_h(F_1(\overline{U_h^{n-1}}(t)),1) .
    \end{align*}
	Hence by using the smoothness of $V_h$ and arguing along the lines of \eqref{timenorm1} one obtains that
	\begin{align}
		\frac{1}{\tau\varepsilon}\left| m_h(F_1(U_h^{n-1}), 1) - m_h(F_1(\overline{U_h^{n-1}}), 1) \right| \leq C \widetilde{\Echh}[U_h^{n-1}], \label{discstab3}
	\end{align}
	for some constant $C$ independent of $\tau, h$.\\
	
	Next we notice that one can write
	\begin{align}
 \begin{aligned}
		\frac{\theta}{\tau\varepsilon} U^n \cdot M^n(U^n-U^{n-1}) &= \frac{\theta}{2 \tau \varepsilon} \left( U^n \cdot M^n U^n - U^{n-1} \cdot M^{n-1} U^{n-1} \right)\\
  &+ \frac{\theta}{2 \tau \varepsilon}(U^n - U^{n-1}) \cdot M^n (U^n - U^{n-1})\\
&+ \frac{\theta}{2 \tau \varepsilon} \left[U^{n-1} \cdot M^{n-1} U^{n-1} - U^{n-1} \cdot M^{n} U^{n-1} \right],
  \end{aligned}\label{discstab4}
	\end{align}
	where we note that the first term on the right hand side corresponds to the quadratic part of the potential.
	One dots \eqref{matrixeqn1} with $\frac{\theta}{2\varepsilon}(U^{n} - U^{n-1})$ to see that
    \begin{multline*}
        \frac{\theta}{2 \tau \varepsilon}(U^n - U^{n-1}) \cdot M^n (U^n - U^{n-1}) = (U^n-U^{n-1})\cdot\frac{\theta(M^n - M^{n-1})}{2\varepsilon\tau} U^{n-1}\\
        + \frac{\theta}{2\varepsilon}(U^n-U^{n-1}) \cdot A^n W^n,
    \end{multline*}
	and by using \eqref{timeperturb1}, \eqref{timenorm3}, \eqref{timenorm4} and a Young's inequality this yields
	\begin{align}
		\begin{split}
			\left| \frac{\theta}{2 \tau \varepsilon}(U^n - U^{n-1}) \cdot M^n (U^n - U^{n-1}) \right| &\leq \frac{C \theta}{\varepsilon} \left( \|U_h^{n-1}\|_{L^2(\Gamma_h^{n-1})}^2 + \|U_h^{n}\|_{L^2(\Gamma_h^{n})}^2\right)\\
			&+ \frac{C\theta^2 }{\varepsilon^2}\left(\|\gradgh U_h^{n-1}\|_{L^2(\Gamma_{h}^{n-1})}^2 + \|\gradgh U_h^n\|_{L^2(\Gamma_{h}^n)}^2\right)\\
			&+\frac{1}{4} \|\gradgh W_h^{n}\|_{L^2(\Gamma_h^{n})}^2, \label{discstab5}
		\end{split}
	\end{align}
	where $C$ is independent of $\varepsilon, \theta$.\\
	
	Similarly, by using \eqref{timeperturb1}, \eqref{timenorm3} we find that
	\begin{align}
		\left| W^n \cdot \frac{(M^n - M^{n-1})}{\tau} U^{n-1} \right| \leq C\|W_h^n\|_{L^2(\Gamma_h^n)}\|U_h^{n-1}\|_{L^2(\Gamma_h^{n-1})}.
	\end{align}
	Now by testing \eqref{fdiscfecheqn2} with $W_h^n$ and using Young's inequality we find that
	\begin{multline}
		\| W_h^n \|_{L^2(\Gamma_h^n)}^2 \leq \frac{1}{2}\| W_h^n \|_{L^2(\Gamma_h^n)}^2 + \frac{1}{8}\| \gradgh W_h^n \|_{L^2(\Gamma_h^n)}^2 + C\|F_1'(U_h^n)\|_{L^2(\Gamma_h^n)}^2\\
		+ C\|U_h^n\|_{L^2(\Gamma_h^n)}^2 + C \|\gradgh U_h^n\|_{L^2(\Gamma_h^n)}^2, \label{implicitwbound}
	\end{multline}
	where the constants $C$ may depend on $\varepsilon, \theta$.
	Next we recall the condition on the potential, 
	$$ |F_1'(r)| \leq \alpha |r|^q + \alpha,$$
	for constants $\alpha \in \mathbb{R}^+$, hence
	$$ \|F_1'(U_h^n)\|_{L^2(\Gamma_h^n)}^2 \leq C|\Gamma_h^n| + C\|U_h^n\|_{L^{2q}(\Gamma_h^n)}^{2q} \leq C|\Gamma_h^n| + C\|U_h^n\|_{H^1(\Gamma_h^n)}^{2q}, $$
	where we have used the Sobolev embedding $H^1(\Gamma_h^n) \hookrightarrow L^{2q}(\Gamma_h^n)$ (and lifts if necessary so that the constants are independent of $h$).
	By using lifts and the assumptions on $V$ we can then bound $|\Gamma_h^n| \leq C$ independent of $h,\tau$, and we can use Poincar\'e's inequality to see that
	\[\|U_h^n\|_{H^1(\Gamma_h^n)} \leq C \left( \left|\mval{U_{h,0}}{\Gamma_h^0}\right| + \|\gradgh U_h^n\|_{L^2(\Gamma_h^n)}\right).\]
	Using lifts and \eqref{discinitialdata} it is straightforward to show that $|\mval{U_{h,0}}{\Gamma_h^0}| \leq C$ for a constant $C$ depending on $u_0$ but independent of $h$.
	Similarly we note that by construction,
	$$ \|\gradgh U_h^n\|_{L^2(\Gamma_h^n)}^2 \leq C \widetilde{\Echh}[U_h^n].$$
	It then follows that
	\begin{align}
		\| W_h^n \|_{L^2(\Gamma_h^n)}^2 \leq C + \frac{1}{4}\| \gradgh W_h^n \|_{L^2(\Gamma_h^n)}^2 + C\|\gradgh U_h^n\|_{L^2(\Gamma_h^n)}^2 + C \left(\widetilde{\Echh}[U_h^n]\right)^q, \label{discstab6}
	\end{align}
	where we have again used Poincar\'e's inequality.
	Finally, combining \eqref{discstab1}, \eqref{discstab2}, \eqref{discstab3}, \eqref{discstab4}, \eqref{discstab5}, \eqref{discstab6}, using \eqref{timeperturb1}, \eqref{timeperturb2} and Poincar\'e's inequality where necessary we find that, after multiplying by $\tau$,
	\begin{multline}
		\widetilde{\Echh}[U_h^n] - \widetilde{\Echh}[U_h^{n-1}] + \frac{\tau}{2} \| \gradgh W_h^n \|_{L^2(\Gamma_h^n)}^2 + \frac{\tau}{2} \| \gradgh \matdevtau U_h^n \|_{L^2(\Gamma_h^n)}^2 \leq C\tau\\
		+ C\tau \left( \widetilde{\Echh}[U_h^{n-1}] + \widetilde{\Echh}[U_h^{n}] \right) + C \tau \left(\widetilde{\Echh}[U_h^n]\right)^q, \label{discstab7}
	\end{multline}
	where the constants $C$ may depend on $\varepsilon, \theta$.
	Now summing \eqref{discstab7} over $n = 1,...,N \leq \lfloor \frac{T}{\tau} \rfloor$ we conclude that \eqref{stabilitybound} holds. \qed
\end{proof}

This proof mirrors that of \cite{caetano2021cahn} Proposition 4.5, and they are in a position where they can use a generalised Gr\"onwall inequality to obtain a bound.
In our case we cannot do something analogous, as both sides of the inequality \eqref{stabilitybound} depend on the final time $N$, whereas the usual discrete analogues of the Gr\"onwall inequality (for example the Bihari-type inequalities in \cite{wu2013nonlinear}) only go up to time $N-1$ on the righthand side.
In fact the inequality we would require likely does not hold, as the above inequality remains true for arbitrarily large $\widetilde{\Echh}[U_h^N]$.\\

If $q=1$ then we can take $\tau$ sufficiently small so that we can apply the usual discrete Gr\"onwall inequality to \eqref{stabilitybound} to obtain a bound in terms of $\widetilde{\Echh}[U_{h,0}]$ and $T$.
Moreover, it is clear that $\widetilde{\Echh}[U_{h,0}]$ can be bounded independent of $h$ by \eqref{discinitialdata}.
Stability is not surprising in this case as we are considering a nonlinearity with linear growth, which is then similar to the linear case.
As such we firstly consider the error analysis for a quadratic potential, $q=1$, and return to the general setting, $q > 1$, later.
Lastly, we note that upon inspection of this proof it is clear that this issue arises solely because of the evolution of the domain, and for a stationary surface one can neglect these considerations, as $M^n = M^{n-1}$.\\

Regardless of this stability issue for general $q$, we end this section with a result on the dependence on initial data.
For this we introduce a new time projection, similar to \eqref{timeproj}.

\begin{definition}
	Given $z_h^{n-1} \in S_h^{n-1}$ we define $z_{h,++}^{n-1} \in S_h^n$ to be the unique solution of
	\begin{align}
		\begin{split}
			a_h(t_n; z_{h,++}^{n-1}, \phi_h^n) &= a_h(t_{n-1};z_{h}^{n-1}, \underline{\phi_h^n}),\\
			\int_{\Gamma_h^n} z_{h,++}^{n-1} &= \int_{\Gamma_h^{n-1}}z_{h}^{n-1},
		\end{split}
		\label{timeproj3}
	\end{align}
	for all $\phi_h^n \in S_h^n$.
\end{definition}
One can show that
\begin{align}
	\|\gradgh z_{h,++}^{n-1}\|_{L^2(\Gamma_h^{n})} &\leq C \|\gradgh z_{h}^{n-1}\|_{L^2(\Gamma_h^{n-1})},\label{timeproj4}\\
	\| \gradgh(z_{h,++}^{n-1} - \overline{z_{h}^{n-1}}) \|_{L^2(\Gamma_h^n)} &\leq C \tau \|\gradgh z_{h}^{n-1}\|_{L^2(\Gamma_h^{n-1})} \label{timeproj5},
\end{align}
by similar means as the proofs of \eqref{timeproj1}, \eqref{timeproj2}.
We do not do these calculations here.
The importance of this projection comes from the observation that
\[ \invshn{n} z_{h,+}^{n-1} = (\invshn{n-1} z_h^{n-1})_{++}. \]
To see this we compute
\begin{align*}
	a_h(t_n;(\invshn{n-1} z_h^{n-1})_{++}, \phi_h^n) &= a_h(t_{n-1};\invshn{n-1} z_h^{n-1}, \underline{\phi_h^n}) = m_h(t_{n-1};z_h^{n-1}, \underline{\phi_h^n})\\
 &= m_h(t_n;z_{h,+}^{n-1}, \phi_h^n) = a_h(t_{n};\invshn{n} z_{h,+}^{n-1}, \phi_h^n),
\end{align*}
and use the fact that
\[ \mval{(\invshn{n-1} z_h^{n-1})_{++}}{\Gamma_h^n} = 0 =  \mval{\invshn{n} z_{h,+}^{n-1}}{\Gamma_h^n}. \]
With this observation, we now show continuous dependence on the initial data.

\begin{proposition}
	\label{continuous dependence}
	Let $U_{h,0}^1, U_{h,0}^2 \in S_h^0$ be such that $\mval{U_{h,0}^1}{\Gamma_h(0)} = \mval{U_{h,0}^2}{\Gamma_h(0)}$.
	Denoting the solution of \eqref{fdiscfecheqn1},\eqref{fdiscfecheqn2} with initial data $U_{h,0}^i$ by $(U_{h}^{n,i},W_{h}^{n,i})$ we have that for $1\leq N \leq N_T$,
	\begin{multline}
		\label{smoothstability}
		\normshn{U_h^{N,1} -U_h^{N,2} }{N}^2 + \frac{\varepsilon^4 \tau}{\varepsilon^3 - \tau \theta^2} \sum_{n=1}^N  \| \gradgh(U_h^{N,1} -U_h^{N,2}) \|_{L^2(\Gamma_h^n)}^2\\
  \leq e^{C t_N} \left(\frac{ \varepsilon^3}{ \varepsilon^3 - \tau \theta^2}\right) \normshn{U_{h,0}^1 - U_{h,0}^2}{0}^2,
	\end{multline}
	provided that $\tau < \frac{ \varepsilon^3}{\theta^2}$.
	
\end{proposition}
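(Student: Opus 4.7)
The plan is a discrete analogue of the standard $H^{-1}$-energy argument for continuous dependence for the Cahn--Hilliard equation, implemented via the discrete inverse Laplacian $\invshn{n}$. Write $\widehat{U}_h^n := U_h^{n,1} - U_h^{n,2}$ and $\widehat{W}_h^n := W_h^{n,1} - W_h^{n,2}$. Choosing $\phi_h^n = 1$ in \eqref{fdiscfecheqn3} gives mass conservation $\int_{\Gamma_h^n} U_h^n = \int_{\Gamma_h^{n-1}} U_h^{n-1}$ at each step, so the matching-mean assumption forces $\int_{\Gamma_h^n}\widehat{U}_h^n = 0$ for all $n$, and likewise $\int_{\Gamma_h^n}\widehat{U}_{h,+}^{n-1} = 0$ directly from \eqref{timeproj}. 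Both therefore admit discrete inverse Laplacians. Using the projection \eqref{timeproj}, the difference of the mass-balance equations reads
\begin{equation*}
m_h(\widehat{U}_h^n - \widehat{U}_{h,+}^{n-1}, \phi_h^n) + \tau a_h(\widehat{W}_h^n, \phi_h^n) = 0 .
\end{equation*}

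I would test this with $\invshn{n}\widehat{U}_h^n$ and exploit the defining identity $a_h(\invshn{n} z, \phi) = m_h(z, \phi)$: the first term becomes $\normshn{\widehat{U}_h^n}{n}^2 - a_h(\invshn{n}\widehat{U}_{h,+}^{n-1}, \invshn{n}\widehat{U}_h^n)$ and the second becomes $\tau m_h(\widehat{W}_h^n, \widehat{U}_h^n)$. Testing the difference of \eqref{fdiscfecheqn2} with $\widehat{U}_h^n$ and using convexity of $F_1$ to discard $m_h(F_1'(U_h^{n,1}) - F_1'(U_h^{n,2}),\widehat{U}_h^n) \geq 0$ yields
\begin{equation*}
m_h(\widehat{W}_h^n, \widehat{U}_h^n) \geq \varepsilon\|\gradgh\widehat{U}_h^n\|_{L^2(\Gamma_h^n)}^2 - \tfrac{\theta}{\varepsilon}\|\widehat{U}_h^n\|_{L^2(\Gamma_h^n)}^2 ,
\end{equation*}
so combining produces
\begin{equation*}
\normshn{\widehat{U}_h^n}{n}^2 - a_h(\invshn{n}\widehat{U}_{h,+}^{n-1}, \invshn{n}\widehat{U}_h^n) + \tau\varepsilon\|\gradgh\widehat{U}_h^n\|_{L^2(\Gamma_h^n)}^2 \leq \tfrac{\tau\theta}{\varepsilon}\|\widehat{U}_h^n\|_{L^2(\Gamma_h^n)}^2 .
\end{equation*}

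The crux, and the main obstacle, is showing the sharp bound $\|\gradgh(\invshn{n-1}\widehat{U}_h^{n-1})_{++}\|_{L^2(\Gamma_h^n)}^2 \leq (1 + C\tau)\normshn{\widehat{U}_h^{n-1}}{n-1}^2$, using the identity $\invshn{n}\widehat{U}_{h,+}^{n-1} = (\invshn{n-1}\widehat{U}_h^{n-1})_{++}$ recalled in the excerpt. The generic bound \eqref{timeproj4} carries a constant $C$ independent of $\tau$, which would iterate to $C^N$ and destroy the estimate; the refined $(1+C\tau)$ factor is obtained by splitting $(\invshn{n-1}\widehat{U}_h^{n-1})_{++} = \overline{\invshn{n-1}\widehat{U}_h^{n-1}} + O(\tau)$ in $H^1$ via \eqref{timeproj5}, and then applying Proposition \ref{transport3} to the map $t \mapsto a_h(\overline{\invshn{n-1}\widehat{U}_h^{n-1}}(t), \overline{\invshn{n-1}\widehat{U}_h^{n-1}}(t))$, whose material derivative vanishes by the transport property of basis functions. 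Cauchy--Schwarz and a $\tfrac{1}{2}(a^2+b^2)$ Young's inequality then bound the cross term by $\tfrac{1}{2}\normshn{\widehat{U}_h^n}{n}^2 + \tfrac{1}{2}(1+C\tau)\normshn{\widehat{U}_h^{n-1}}{n-1}^2$.

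For the $L^2$ term on the right I use the interpolation $\|\widehat{U}_h^n\|^2_{L^2(\Gamma_h^n)} = a_h(\invshn{n}\widehat{U}_h^n, \widehat{U}_h^n) \leq \normshn{\widehat{U}_h^n}{n}\|\gradgh\widehat{U}_h^n\|_{L^2(\Gamma_h^n)}$, followed by Young's inequality with parameter $\delta = \theta/\varepsilon^2$ chosen precisely so the gradient piece is absorbed into half of $\tau\varepsilon\|\gradgh\widehat{U}_h^n\|^2$. After multiplying by $2$ and rearranging, the result is the recursion
\begin{equation*}
\left(1 - \tfrac{\tau\theta^2}{\varepsilon^3}\right)\normshn{\widehat{U}_h^n}{n}^2 + \tau\varepsilon\|\gradgh\widehat{U}_h^n\|_{L^2(\Gamma_h^n)}^2 \leq (1 + C\tau)\normshn{\widehat{U}_h^{n-1}}{n-1}^2 ,
\end{equation*}
well posed because $\tau\theta^2/\varepsilon^3 < 1$ by hypothesis. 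Dividing by $(\varepsilon^3 - \tau\theta^2)/\varepsilon^3$ produces exactly the stated coefficient $\tfrac{\varepsilon^4\tau}{\varepsilon^3-\tau\theta^2}$ on the gradient term, and a standard telescoping--multiply by $\gamma^{-n}$ with $\gamma := (1+C\tau)/(1-\tau\theta^2/\varepsilon^3)$, sum, and use $\gamma^{N-n} \geq 1$--yields \eqref{smoothstability} upon bounding $\gamma^N \leq \tfrac{\varepsilon^3}{\varepsilon^3-\tau\theta^2}e^{Ct_N}$ for $\tau$ sufficiently small.
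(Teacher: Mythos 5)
Your proposal is correct and follows essentially the same route as the paper: test the mass-balance difference with $\invshn{n}\widehat{U}_h^n$, exploit $\invshn{n}\widehat{U}_{h,+}^{n-1}=(\invshn{n-1}\widehat{U}_h^{n-1})_{++}$ and \eqref{timeproj5} to control the change of discrete $H^{-1}$ norm across surfaces, absorb the $L^2$ term via $\|\widehat{U}_h^n\|_{L^2}^2 = a_h(\invshn{n}\widehat{U}_h^n,\widehat{U}_h^n)$ and Young's inequality, then telescope. The only cosmetic difference is that the paper expands $a_h(\invshn{n}(\widehat{U}_h^n-\widehat{U}_{h,+}^{n-1}),\invshn{n}\widehat{U}_h^n)$ exactly via the polarization identity (picking up a nonnegative remainder $\normshn{\widehat{U}_h^n-\widehat{U}_{h,+}^{n-1}}{n}^2$ that it then discards), where you instead apply Cauchy--Schwarz and Young directly to the cross term; both yield the same recursion. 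One small imprecision in your phrasing: the discrete material derivative of $t\mapsto a_h\bigl(\overline{\invshn{n-1}\widehat{U}_h^{n-1}}(t),\overline{\invshn{n-1}\widehat{U}_h^{n-1}}(t)\bigr)$ does not vanish -- it equals $b_h(\cdot,\cdot)$ by Proposition \ref{transport3}, since it is the discrete material derivative of the \emph{argument} that vanishes by the transport property; this $b_h$ term is precisely what produces the $(1+C\tau)$ factor after integrating over $[t_{n-1},t_n]$.
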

\begin{proof}
	Firstly we define some shorthand notation
	\begin{gather*}
		\widehat{U}_h^n = U_{h,1}^n - U_{h,2}^n,\\
		\widehat{W}_h^n = W_{h,1}^n - W_{h,2}^n,
	\end{gather*}
	and note that from \eqref{fdiscfecheqn1}, \eqref{fdiscfecheqn2}
	\begin{gather}
		\frac{1}{\tau} \left( m_h(\widehat{U}_h^n, \phi_h^n) - m_h(\widehat{U}_h^{n-1}, \underline{\phi_h^n}) \right) + a_h(\widehat{W}_h^n,\phi_h^n) = 0, \label{smoothstabeqn1}\\
		m_h(\widehat{W}_h^n,\phi_h^n) = \varepsilon a_h(\widehat{U}_h^n, \phi_h^n) + \frac{1}{\varepsilon} m_h(F_1'(U_{h,1}^n) - F_1'(U_{h,2}^n), \phi_h^n) - \frac{\theta}{\varepsilon} m_h(\widehat{U}_h^n, \phi_h^n), \label{smoothstabeqn2}
	\end{gather}
	holds for all $\phi_h^n \in S_h^n$.\\
	
	We test \eqref{smoothstabeqn1} with $\invshn{n} \widehat{U}_h^n$ for
	\[ m_h(\widehat{U}_h^n, \invshn{n} \widehat{U}_h^n) - m_h(\widehat{U}_h^{n-1}, \underline{\invshn{n} \widehat{U}_h^n}) + \tau a_h(\widehat{W}_h^n,\invshn{n} \widehat{U}_h^n) = 0, \]
	and we note that
    \begin{multline*}
        a_h(\widehat{W}_h^n,\invshn{n} \widehat{U}_h^n) = m_h(\widehat{W}_h^n, \widehat{U}_h^n) = \varepsilon a_h(\widehat{U}_h^n, \widehat{U}_h^n) + \frac{1}{\varepsilon} m_h(F_1'(U_{h,1}^n) - F_1'(U_{h,2}^n), \widehat{U}_h^n)\\
        - \frac{\theta}{\varepsilon} m_h(\widehat{U}_h^n, \widehat{U}_h^n),
    \end{multline*}
	and so one finds that
	\begin{multline}
		m_h(\widehat{U}_h^n, \invshn{n} \widehat{U}_h^n) - m_h(\widehat{U}_h^{n-1}, \underline{\invshn{n} \widehat{U}_h^n}) + \varepsilon \tau a_h(\widehat{U}_h^n, \widehat{U}_h^n) + \frac{\tau}{\varepsilon} m_h(F_1'(U_{h,1}^n) - F_1'(U_{h,2}^n), \widehat{U}_h^n)\\
		= \frac{\tau\theta}{\varepsilon} m_h(\widehat{U}_h^n, \widehat{U}_h^n). \label{stability1}
	\end{multline}
	From the convexity of $F_1(\cdot)$ one has that $m_h(F_1'(U_{h,1}^n) - F_1'(U_{h,2}^n), \widehat{U}_h^n) \geq 0$.
	The difficulty now is in rewriting the term on the previous surface.
	For this we use \eqref{timeproj} so that
    \begin{align}
        m_h(\widehat{U}_h^n, \invshn{n} \widehat{U}_h^n) - m_h(\widehat{U}_h^{n-1}, \underline{\invshn{n} \widehat{U}_h^n}) &= m_h(\widehat{U}_h^n - \widehat{U}_{h,+}^{n-1}, \invshn{n} \widehat{U}_h^n) \notag\\
        &= a_h(\invshn{n}(\widehat{U}_h^n - \widehat{U}_{h,+}^{n-1}), \invshn{n} \widehat{U}_h^n), \label{invlap difference1}
    \end{align}
	where we notice that
	\begin{align}
    \invshn{n}(\widehat{U}_h^n - \widehat{U}_{h,+}^{n-1}) = \invshn{n}\widehat{U}_h^n - \invshn{n}\widehat{U}_{h,+}^{n-1}. \label{invlap difference2} \end{align}
	Likewise it is a straightforward calculation to verify that
	\[ a_h(\invshn{n}(\widehat{U}_h^n - \widehat{U}_{h,+}^{n-1}), \invshn{n} \widehat{U}_h^n) = \frac{1}{2} \left( \normshn{\widehat{U}_h^n}{n}^2 - \normshn{\widehat{U}_{h,+}^{n-1}}{n}^2 + \normshn{\widehat{U}_h^n - \widehat{U}_{h,+}^{n-1}}{n}^2 \right), \]
	and so \eqref{stability1} becomes
	\begin{multline}
		\normshn{\widehat{U}_h^n}{n}^2 - \normshn{\widehat{U}_{h}^{n-1}}{n-1}^2+ \normshn{\widehat{U}_h^n - \widehat{U}_{h,+}^{n-1}}{n}^2 + 2\varepsilon\tau \|\gradgh \widehat{U}_h^n \|_{L^2(\Gamma_h^n)}^2 \leq \frac{2 \tau \theta}{\varepsilon} \|\widehat{U}_h^n \|_{L^2(\Gamma_h^n)}^2\\
		+ \left( \normshn{\widehat{U}_{h,+}^{n-1}}{n}^2- \normshn{\widehat{U}_{h}^{n-1}}{n-1}^2 \right). \label{stability2}
	\end{multline}
	
	We now show a bound for this final term.
	To begin we calculate
	\begin{align*}
		\normshn{\widehat{U}_{h,+}^{n-1}}{n}^2- \normshn{\widehat{U}_{h}^{n-1}}{n-1}^2 &= m_h(\widehat{U}_{h,+}^{n-1}, \invshn{n}\widehat{U}_{h,+}^{n-1}) - m_h(\widehat{U}_{h}^{n-1}, \invshn{n-1}\widehat{U}_{h}^{n-1}) \\
		&= m_h(\widehat{U}_{h,+}^{n-1}, (\invshn{n-1}\widehat{U}_{h}^{n-1})_{++}) - m_h(\widehat{U}_{h}^{n-1}, \invshn{n-1}\widehat{U}_{h}^{n-1}) \\
		&= m_h(\widehat{U}_{h}^{n-1},\underline{(\invshn{n-1}\widehat{U}_{h}^{n-1})_{++}} -  \invshn{n-1}\widehat{U}_{h}^{n-1})\\
		&= a_h(\invshn{n-1}\widehat{U}_{h}^{n-1},\underline{(\invshn{n-1}\widehat{U}_{h}^{n-1})_{++}} -  \invshn{n-1}\widehat{U}_{h}^{n-1}),
	\end{align*}
	from which \eqref{timeproj5} yields that
	\begin{align}
		\left|\normshn{\widehat{U}_{h,+}^{n-1}}{n}^2- \normshn{\widehat{U}_{h}^{n-1}}{n-1}^2\right| \leq C \tau \normshn{\widehat{U}_{h}^{n-1}}{n-1}^2. \label{stability3}
	\end{align}
	Lastly we use the definition of $\invshn{n}$ and Young's inequality to see that
	\begin{align}
		\frac{2 \tau \theta}{\varepsilon} \|\widehat{U}_h^n \|_{L^2(\Gamma_h^n)}^2 = \frac{2 \tau \theta}{\varepsilon} a_h(\invshn{n}\widehat{U}_h^n,\widehat{U}_h^n) \leq  \varepsilon \tau \|\gradgh \widehat{U}_h^n\|_{L^2(\Gamma_h^n)}^2 + \frac{\tau \theta^2}{ \varepsilon^3}\normshn{\widehat{U}_h^n}{n}^2. \label{stability4}
	\end{align}
	Now using \eqref{stability3}, \eqref{stability4} in \eqref{stability2} and summing over $n$ one obtains
	\begin{multline*}
		\normshn{\widehat{U}_h^N}{N}^2 - \normshn{\widehat{U}_h^0}{0}^2 + \varepsilon \tau \sum_{n=1}^N \|\gradgh \widehat{U}_h^n\|_{L^2(\Gamma_h^n)}^2 + \sum_{n=1}^N \normshn{\widehat{U}_h^n - \widehat{U}_{h,+}^{n-1}}{n}^2\\
  \leq \frac{\tau \theta^2}{ \varepsilon^3}\normshn{\widehat{U}_h^N}{N}^2+ C \tau \sum_{n=0}^{N-1} \normshn{\widehat{U}_h^n}{n}^2.
	\end{multline*}
	The result then follows from our assumption that $\tau < \frac{ \varepsilon^3}{\theta^2}$ and a discrete Gr\"onwall inequality. \qed
\end{proof}

\section{Error analysis}
\label{section:error}
Throughout, we assume that the initial data $u_0 \in H^{k+1}(\Gamma(0))$
 and  the following bounds on the solution of the continuous problem \eqref{cheqn1}, \eqref{cheqn2}
\begin{multline}
	\label{smoothassumptions1}
	\sup_{t \in [0,T]} \left( \|u\|_{H^{k+1}(\Gamma(t))} +  \|w\|_{H^{k+1}(\Gamma(t))}\right)\\
 + \int_0^T \|\matdev u\|_{H^{k+1}(\Gamma(t))}^2 + \|(\matdev)^2 u\|_{L^2(\Gamma(t))}^2 \, dt  < \infty.
\end{multline}
Some of these bounds are established in \cite{caetano2021cahn} for $k=1$.
We note that establishing these bounds for $k > 1$ will require $\Gamma(t)$ to be sufficiently smooth (at least $C^{k+1}$) as well as smoothness and polynomial growth assumptions on the potential $F$.
These bounds imply that \eqref{cheqn1}, \eqref{cheqn2} holds for all $t \in [0,T]$ and as such we can consider them at $t = t_n$.
We recall that by our assumptions that the velocity field $V$ is such that
\begin{align}
	\sup_{t \in [0,T]} \left( \|V\|_{L^{\infty}(\Gamma(t))} + \|\matdev V\|_{L^{\infty}(\Gamma(t))} \right) < \infty. \label{smoothassumptions2}
\end{align}

As in \cite{dziuk2012fully,elliott2015evolving} we look at the splitting
\begin{align*}
	(u^n)^{-\ell} - U_h^n &= \underbrace{(u^n)^{-\ell} - \Pi_h u^n}_{=: \rho_u^n} + \underbrace{\Pi_h u^n - U_h^n}_{=: \sigma_u^n},\\
	(w^n)^{-\ell} - W_h^n &= \underbrace{(w^n)^{-\ell} - \Pi_h w^n}_{=: \rho_w^n} + \underbrace{\Pi_h w^n - U_h^n}_{=: \sigma_w^n},
\end{align*}
where $u^n = u(t_n), w^n = w(t_n).$
Note that \eqref{smoothassumptions1} implies that $\Pi_h u^n, \Pi_h w^n$ are defined for all $n$.
We recall by \eqref{ritz3} that $\rho_u^n, \rho_w^n$ are such that
\begin{multline*}
    \max_{n=1,...,N_T} \|\rho_u^n\|_{L^2(\Gamma_h^n)}^2 + \tau \sum_{n=1}^{N_T}  \|\rho_w^n\|_{L^2(\Gamma_h^n)}^2\\
    \leq Ch^{2(k+1)} \sup_{t \in [0,T]}\left( \|u\|_{H^{k+1}(\Gamma(t))}^2 + \|w\|_{H^{k+1}(\Gamma(t))}^2\right),
\end{multline*}
and hence it remains to show bounds on $\sigma_u^n$ and $\sigma_w^n$.\\

We begin by finding equations of the form \eqref{fdiscfecheqn1}, \eqref{fdiscfecheqn2} satisfied by $\sigma_u^n, \sigma_w^n$. They will involve consistency errors $E_i:S_h^n\rightarrow \mathbb R, i=1,2,3,4$ defined below.
Firstly, by the definition of the Ritz projection \eqref{ritz} we find
\begin{multline*}
	\frac{1}{\tau} \left( m_h(\Pi_h u^n, \phi_h^n) - m_h(\Pi_h u^{n-1}, \underline{\phi_h^n}) \right) + a_h(\Pi_h w^n , \phi_h^n)\\
	= \frac{1}{\tau} \left( m_h(\Pi_h u^n, \phi_h^n) - m_h(\Pi_h u^{n-1}, \underline{\phi_h^n}) \right) + a(w^n, (\phi_h^n)^\ell).
\end{multline*}
We then subtract \eqref{fdiscfecheqn1} and note that by \eqref{cheqn1}
$$ a(w^n, (\phi_h^n)^\ell) = -m(\matdev u^n, (\phi_h^n)^\ell) - g(u^n, (\phi_h^n)^\ell),$$
to obtain 
\begin{align}
	\frac{1}{\tau} \left( m_h(\sigma_u^n, \phi_h^n) - m_h(\sigma_u^{n-1}, \underline{\phi_h^n}) \right) + a_h(\sigma_w^n , \phi_h^n) = E_1(\phi_h^n),
	\label{smootherroreqn1}
\end{align}
where
\begin{align*}
	E_1(\phi_h^n) := \frac{1}{\tau} \left( m_h(\Pi_h u^n, \phi_h^n) - m_h(\Pi_h u^{n-1}, \underline{\phi_h^n}) \right) -m(\matdev u^n, (\phi_h^n)^\ell)- g(u^n, (\phi_h^n)^\ell).
\end{align*}
Similarly we observe that
\begin{multline*}
	m_h(\Pi_h w^n, \phi_h^n) - \varepsilon a_h(\Pi_h u^n, \phi_h^n) - \frac{1}{\varepsilon}m_h(F_1'(\Pi_h u^n), \phi_h^n) + \frac{\theta}{\varepsilon} m_h(\Pi_h u^n, \phi_h^n)
	\\ = m_h(\Pi_h w^n, \phi_h^n) - \varepsilon a(u^n, (\phi_h^n)^\ell) - \frac{1}{\varepsilon}m_h(F_1'(\Pi_h u^n), \phi_h^n) + \frac{\theta}{\varepsilon} m_h(\Pi_h u^n, \phi_h^n),
\end{multline*}
and hence by subtracting \eqref{fdiscfecheqn2}, and using \eqref{cheqn2} we find
\begin{multline}
	m_h(\sigma_w^n, \phi_h^n) - \varepsilon a_h(\sigma_u^n, \phi_h^n) - \frac{1}{\varepsilon}m_h(F_1'(\Pi_h u^n) - F_1'(U_h^n), \phi_h^n) + \frac{\theta}{\varepsilon} m_h(\sigma_u^n, \phi_h^n)\\
	= E_2(\phi_h^n) + E_3(\phi_h^n) + E_4(\phi_h^n),
	\label{smootherroreqn2}
\end{multline}
where
\begin{gather*}
	E_2(\phi_h^n) := m_h(\Pi_h w^n, \phi_h^n) - m(w^n, (\phi_h^n)^\ell),\\
	E_3(\phi_h^n) := \frac{1}{\varepsilon} \left[m(F_1'(u^n), (\phi_h^n)^\ell )  - m_h(F_1'(\Pi_h u^n), \phi_h^n)\right],\\
	E_4(\phi_h^n) := \frac{\theta}{\varepsilon} \left[ m_h(\Pi_h u^n, \phi_h^n) - m(u^n, (\phi_h^n)^\ell) \right].
\end{gather*}

\subsection{Quadratic potential}

By the assumptions on $F_1$ we find $F_1'$ is Lipschitz continuous.
We denote the Lipschitz constant by $L_F$, which will be important in the later analysis.

\begin{lemma}[\bf Consistency errors]
	\label{smootherrorlemma}
	For $E_1,...,E_4$ as defined above we have that
	\begin{align*}
		|E_1(\phi_h^n)| &\leq \frac{Ch^{k+1}}{\sqrt{\tau}} \left( \int_{t_{n-1}}^{t_n}  \|u\|_{H^{k+1}(\Gamma(t))}^2 + \|\matdev u \|_{H^{k+1}(\Gamma(t))}^2 \,dt \right)^{\frac{1}{2}}\|\phi_h^n \|_{L^2(\Gamma_h^n)}\\
	&+ C \sqrt{\tau} \left(\int_{t_{n-1}}^{t_n} \|\matdev u\|_{L^2(\Gamma(t))}^2 + \|(\matdev)^2 u\|_{L^2(\Gamma(t))}^2 \, dt \right)^{\frac{1}{2}}\|\phi_h^n \|_{L^2(\Gamma_h^n)}\\
 &+ Ch^{k+1}\|u^n\|_{L^2(\Gamma(t^n))}\| \phi_h^n\|_{H^1(\Gamma_h^n)},
	\end{align*}
	\begin{gather*}
		|E_2(\phi_h^n)| \leq  Ch^{k+1} \|w^n\|_{H^{k+1}(\Gamma(t_n))} \|\phi_h^n \|_{L^2(\Gamma_h^n)},\\
		|E_3(\phi_h^n)| \leq  \frac{CL_F h^{k+1}}{\varepsilon}\left( 1 + \|u^n\|_{H^{k+1}(\Gamma(t_n))} \right)\|\phi_h^n\|_{L^2(\Gamma_h^n)},\\
		|E_4(\phi_h^n)| \leq  \frac{C\theta h^{k+1}}{\varepsilon} \|u^n\|_{H^{k+1}(\Gamma(t_n))} \|\phi_h^n \|_{L^2(\Gamma_h^n)},
	\end{gather*}
	and $C$ denotes some constant independent of $h, \tau$.
\end{lemma}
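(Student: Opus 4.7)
The plan is to dispatch $E_2, E_3, E_4$ first, since each follows the same short pattern, and then tackle $E_1$, which couples geometric, Ritz and time-discretisation errors and will be the main obstacle. For $E_2$ and $E_4$ I would add and subtract the lifted Ritz projection $m((\Pi_h z)^\ell, (\phi_h^n)^\ell) = m(\pi_h z, (\phi_h^n)^\ell)$, yielding a geometric perturbation $m_h(\Pi_h z, \phi_h^n) - m(\pi_h z, (\phi_h^n)^\ell)$ controlled by \eqref{perturb1} and a Ritz error $m(\pi_h z - z, (\phi_h^n)^\ell)$ controlled by \eqref{ritz3}; the factor $\|\Pi_h z\|_{L^2(\Gamma_h^n)}$ arising in the perturbation is bounded by $\|z\|_{H^{k+1}(\Gamma(t_n))}$ using \eqref{lift1}, \eqref{ritz1}, \eqref{ritz3}. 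For $E_3$ I apply the same split and then use the Lipschitz property of $F_1'$ (with constant $L_F$, available because here $q=1$) to handle $m(F_1'(u^n) - F_1'(\pi_h u^n), (\phi_h^n)^\ell)$, and the linear growth of $F_1'$ to bound $\|F_1'(\Pi_h u^n)\|_{L^2(\Gamma_h^n)}$ by $C(1 + \|u^n\|_{H^{k+1}})$.

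For $E_1$ the key idea is to convert the backward difference into a time integral using the basis transport property. Let $\phi_h(t) \in S_h(t)$ be the function with the same nodal values as $\phi_h^n$, so $\matdev_h \phi_h \equiv 0$, $\phi_h(t_n) = \phi_h^n$ and $\phi_h(t_{n-1}) = \underline{\phi_h^n}$; let $\Pi_h u(t) \in S_h(t)$ be the Ritz projection of $u(t)$. Proposition \ref{transport3} yields
\[m_h(\Pi_h u^n, \phi_h^n) - m_h(\Pi_h u^{n-1}, \underline{\phi_h^n}) = \int_{t_{n-1}}^{t_n}\bigl[m_h(\matdev_h \Pi_h u, \phi_h) + g_h(\Pi_h u, \phi_h)\bigr]\,dt,\]
so that $E_1(\phi_h^n)$ is the time average over $[t_{n-1},t_n]$ of this integrand minus the constant $m(\matdev u^n, (\phi_h^n)^\ell) + g(u^n, (\phi_h^n)^\ell)$. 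Each of the two differences then splits into three pieces: (i) a geometric perturbation between $m_h, g_h$ on $\Gamma_h(t)$ and $m, g_\ell$ on $\Gamma(t)$ (bounded via \eqref{perturb1}, \eqref{perturb3}, with $\|\matdev_h \Pi_h u\|$ and $\|\Pi_h u\|$ controlled by \eqref{ritzddtnorm}, \eqref{ritz1}); (ii) a Ritz-material-derivative error $m(\matdev_\ell \pi_h u - \matdev u, \phi_h^\ell)$ together with $g_\ell(\pi_h u - u, \phi_h^\ell)$ (bounded via \eqref{ritzddt}, \eqref{ritz3} and boundedness of $V_h^\ell$); and (iii) a time-discretisation residual comparing evaluation at $t$ with evaluation at $t_n$.

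The hard part is wiring these three pieces into the precise form of the stated bound. For (iii) I use Proposition \ref{transport4} together with $\matdev_\ell \phi_h^\ell = (\matdev_h\phi_h)^\ell = 0$ to write
\[m(\matdev u^n,(\phi_h^n)^\ell) - m(\matdev u(t), \phi_h^\ell(t)) = \int_t^{t_n}\bigl[m((\matdev)^2 u, \phi_h^\ell) + g_\ell(\matdev u, \phi_h^\ell)\bigr]\,ds,\]
swap the order of integration (Fubini), and apply Cauchy-Schwarz in time to obtain the $\sqrt{\tau}$ term involving $\|\matdev u\|_{L^2}$ and $\|(\matdev)^2 u\|_{L^2}$; the analogous calculation for the $g$ time-discretisation residual is handled similarly. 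Integrating the pointwise-in-$t$ bounds from (i) and (ii) over $[t_{n-1},t_n]$ and again applying Cauchy-Schwarz in time produces the $h^{k+1}/\sqrt{\tau}$ term with $H^{k+1}$ norms of $u$ and $\matdev u$. Throughout, every norm of $\phi_h(t)$ on an intermediate surface is traced back to $\phi_h^n$ on $\Gamma_h^n$ using \eqref{timenorm1}--\eqref{timenorm2}, which is the most error-prone book-keeping step. Finally, the remaining $h^{k+1}\|u^n\|_{L^2}\|\phi_h^n\|_{H^1}$ contribution originates in the velocity discrepancy $g(u,\phi_h^\ell) - g_\ell(u,\phi_h^\ell) = \int_{\Gamma(t)} u\,\phi_h^\ell\,\gradg\cdot(V-V_h^\ell)$: since \eqref{perturb5} only gives order $h^k$, I would instead integrate by parts to move the surface divergence off $V - V_h^\ell$ onto $u\phi_h^\ell$ (generating at worst curvature times $V - V_h^\ell$), and then use $\|V - V_h^\ell\|_{L^\infty} \leq Ch^{k+1}$ from the Lagrange interpolation of the $C^{k+1}$ field $V$.
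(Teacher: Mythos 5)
Your treatment of $E_2,E_3,E_4$ is identical to the paper's (split off the lifted Ritz projection, use \eqref{perturb1} and \eqref{ritz3}, stability of the lift), so there is nothing to add there. Your treatment of $E_1$ shares the paper's overall skeleton (rewrite the backward difference as a time integral via Proposition~\ref{transport3}, then split into geometric, Ritz and time-residual pieces), but you diverge from the paper on the crucial $O(h^{k+1})\|\phi_h^n\|_{H^1}$ contribution. The paper's move is to subtract $m(\matdev_\ell u^n, (\phi_h^n)^\ell) + g_\ell(u^n, (\phi_h^n)^\ell)$ rather than $m(\matdev u^n, (\phi_h^n)^\ell) + g(u^n, (\phi_h^n)^\ell)$ inside the integral; equating Propositions~\ref{transport2} and~\ref{transport4} for $\tfrac{d}{dt}m(u,\phi_h^\ell)$ (using $\matdev_\ell(\phi_h^n)^\ell = 0$) shows these differ exactly by $m(u^n, (\matdev_\ell - \matdev)(\phi_h^n)^\ell)$, which \eqref{derivativedifference1} bounds directly by $Ch^{k+1}\|u^n\|_{L^2(\Gamma(t_n))}\|\phi_h^n\|_{H^1(\Gamma_h^n)}$. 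This identity (equation \eqref{smootherror1} in the paper) eliminates the $g$-versus-$g_\ell$ discrepancy entirely; all remaining geometric comparisons are between $m_h,g_h$ and $m,g_\ell$, which \eqref{perturb1}--\eqref{perturb4} control at order $h^{k+1}$. You instead confront $g - g_\ell$ head-on, correctly note that \eqref{perturb5} only gives $O(h^k)$, and propose surface integration by parts plus $\|V - V_h^\ell\|_{L^\infty} \le C h^{k+1}$. This can be made to work (the nodal evolution by the exact $V$ keeps the nodes on $\Gamma(t)$, so $V_h$ really is the degree-$k$ Lagrange interpolant of a $C^{k+1}$ field, and the curvature/boundary terms behave), but it buys you a slightly weaker form with $\|\gradg u\|_{L^2}\|\phi_h^\ell\|_{L^2}$ as well as $\|u\|_{L^2}\|\phi_h^\ell\|_{H^1}$, and it forces you to bring in machinery (surface divergence theorem, an $L^\infty$ interpolation estimate for the discrete velocity) that is not part of the paper's toolkit. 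The paper's material-derivative swap is sharper, shorter, and reuses results already in place.

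One genuine slip: in your time-residual formula you write $\int_t^{t_n}\bigl[m((\matdev)^2 u, \phi_h^\ell) + g_\ell(\matdev u, \phi_h^\ell)\bigr]\,ds$ and justify it by Proposition~\ref{transport4} with $\matdev_\ell\phi_h^\ell = 0$, but that proposition produces $m(\matdev_\ell\matdev u, \phi_h^\ell)$, not $m((\matdev)^2 u, \phi_h^\ell)$. The discrepancy $\matdev_\ell\matdev u - (\matdev)^2 u = (\matdev_\ell - \matdev)\matdev u$ is itself $O(h^{k+1})\|\matdev u\|_{H^1(\Gamma(t))}$ by \eqref{derivativedifference1}, which is covered by \eqref{smoothassumptions1}, so the slip is repairable; but it should be acknowledged rather than asserted. (The paper lands on $(\matdev)^2 u$ by instead applying Proposition~\ref{transport2} to $m(\matdev u,\cdot) + g(u,\cdot)$, which generates the $g(\matdev u,\cdot)$ and $m(\matdev u, \cdot\,\matdev V)$ terms you see in its computation.)
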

\begin{proof}
	We firstly bound $E_2,E_3,E_4$ as they are the simplest.
	The bound for $E_2$ is straightforward, and we write
	\begin{multline*}
		|E_2(\phi_h^n)| \leq |m_h(\Pi_h w^n, \phi_h^n) - m(\pi_h w^n, (\phi_h^n)^\ell)| + |m(\pi_h w^n - w^n, (\phi_h^n)^\ell)|\\
		\leq Ch^{k+1} \|w^n\|_{H^{k+1}(\Gamma(t_n))} \|\phi_h^n \|_{L^2(\Gamma_h^n)},
	\end{multline*}
	where we have used \eqref{perturb1}, \eqref{ritz3} and the stability of the lift.
	The bound for $E_4$ follows identically, and the bound on $E_3$ is similar, where we also use the Lipschitz continuity of $F_1'$.
	We see that
	\begin{align*}
		|E_3(\phi_h^n)| &\leq \frac{1}{\varepsilon} \left| m_h(F_1'(\Pi_h u^n), \phi_h^n) -  m(F_1'(\pi_h u^n), (\phi_h^n)^\ell )\right| + \frac{1}{\varepsilon}\left|  m(F_1'(u^n) - F_1'(\pi_h u^n), (\phi_h^n)^\ell ) \right|\\
		&\leq \frac{CL_F h^{k+1}}{\varepsilon} \left( 1 + \|u^n\|_{H^{k+1}(\Gamma(t_n))} \right) \|\phi_h^n \|_{L^2(\Gamma_h^n)},
	\end{align*}
	where we have used the Lipschitz continuity of $F_1'$, \eqref{perturb1}, \eqref{ritz3}, and the stability of the lift.\\
	
	Lastly we bound $E_1$.
	We firstly notice that by comparing Proposition \ref{transport2} and Proposition \ref{transport4} that
	\begin{align}
		m(\matdev u^n, (\phi_h^n)^\ell) + g(u^n, (\phi_h^n)^\ell) =  m(\matdev_\ell u^n, (\phi_h^n)^\ell) + g_\ell(u^n, (\phi_h^n)^\ell) + m(u^n, (\matdev_\ell - \matdev)(\phi_h^n)^\ell). \label{smootherror1}
	\end{align}
	Using this, we write $E_1$ in an integral form as
	\begin{align*}
		E_1(\phi_h^n) &= \frac{1}{\tau} \int_{t_{n-1}}^{t_n} \frac{d}{dt}m_h(\Pi_h u, \underline{\phi_h^n}(t)) - m(\matdev_\ell u^n, (\phi_h^n)^\ell) - g_\ell(u^n, (\phi_h^n)^\ell) \, dt + m(u^n, (\matdev_\ell - \matdev)(\phi_h^n)^\ell)\\
		&=: I_1 + I_2.
	\end{align*}
	Next by using Proposition \ref{transport3} we find
	\begin{align}
		\begin{split}
			I_1 &= \frac{1}{\tau} \int_{t_{n-1}}^{t_n} \left[ m_h(\matdev_h \Pi_h u, \underline{\phi_h^n}(t)) - m(\matdev_\ell \pi_h u,(\underline{\phi_h^n}(t))^\ell ) \right] + \left[m(\matdev_\ell \pi_h u - \matdev_\ell u,(\underline{\phi_h^n}(t))^\ell ) \right] \, dt\\
			&+ \frac{1}{\tau} \int_{t_{n-1}}^{t_n} \left[ g_h(\Pi_h u, \underline{\phi_h^n}(t)) - g_\ell(\pi_h u, (\underline{\phi_h^n}(t))^\ell) \right] + \left[ g_\ell(\pi_h u - u, (\underline{\phi_h^n}(t))^\ell) ) \right] \, dt\\
			& + \frac{1}{\tau} \int_{t_{n-1}}^{t_n} \left[ m(\matdev_\ell u,(\underline{\phi_h^n}(t))^\ell ) - m(\matdev_\ell u^n, ({\phi_h^n})^\ell) + g_\ell(u, (\underline{\phi_h^n}(t))^\ell) - g_\ell(u^n, ({\phi_h^n})^\ell) \right] \, dt.
		\end{split}
		\label{smootherror2}
	\end{align}
	We then write this last term as
	\begin{multline*}
		\frac{1}{\tau} \int_{t_{n-1}}^{t_n} \left[ m(\matdev_\ell u,(\underline{\phi_h^n}(t))^\ell ) - m(\matdev_\ell u^n, ({\phi_h^n})^\ell) + g_\ell(u, (\underline{\phi_h^n}(t))^\ell) - g_\ell(u^n, ({\phi_h^n})^\ell) \right] \, dt\\
		= - \frac{1}{\tau} \int_{t_{n-1}}^{t_n} \int_t^{t_n} \frac{d}{ds} \left( m(\matdev u,(\underline{\phi_h^n}(s))^\ell ) + g(u,(\underline{\phi_h^n}(s))^\ell) \right) \,ds \, dt
	\end{multline*}
	where we have again used \eqref{smootherror1}.
	By using Proposition \ref{transport2} we find that
	\begin{multline*}
		\int_{t_{n-1}}^{t_n} \int_t^{t_n} \frac{d}{ds} \left( m(\matdev u,(\underline{\phi_h^n}(s))^\ell ) + g(u,(\underline{\phi_h^n}(s))^\ell) \right) \,ds \, dt\\
  = \int_{t_{n-1}}^{t_n} \int_t^{t_n} m\left( (\matdev)^2 u,(\underline{\phi_h^n}(s))^\ell \right) + 2g(\matdev u, (\underline{\phi_h^n}(s))^\ell) + m(\matdev u, (\underline{\phi_h^n}(s))^\ell \matdev V) \, ds \,dt.
	\end{multline*}
	Now by using H\"older's inequality, \eqref{timenorm1}, and the assumptions \eqref{smoothassumptions1}, \eqref{smoothassumptions2} we find that
	\begin{align*}
		\left| \frac{1}{\tau} \int_{t_{n-1}}^{t_n} \left[ m(\matdev_\ell u,(\underline{\phi_h^n}(t))^\ell ) - m(\matdev_\ell u^n, ({\phi_h^n})^\ell) + g_\ell(u, (\underline{\phi_h^n}(t))^\ell) - g_\ell(u^n, ({\phi_h^n})^\ell) \right] \, dt \right|\\
		\leq C \int_{t_{n-1}}^{t_n} \left( \|\matdev u\|_{L^2(\Gamma(t))} + \|(\matdev)^2 u\|_{L^2(\Gamma(t))} \right)\|\phi_h^n\|_{L^2(\Gamma_h^n)} \, dt\\
		\leq C \sqrt{\tau} \left(\int_{t_{n-1}}^{t_n} \|\matdev u\|_{L^2(\Gamma(t))}^2 + \|(\matdev)^2 u\|_{L^2(\Gamma(t))}^2 \, dt \right)^{\frac{1}{2}} \|\phi_h^n\|_{L^2(\Gamma_h^n)}.
	\end{align*}
	The bounds for the other two terms in \eqref{smootherror2} follow similarly, using \eqref{derivativedifference1}, \eqref{perturb1}, \eqref{perturb3}, \eqref{ritz3}, \eqref{ritzddt}, \eqref{timenorm1} from which we find
	\begin{align*}
		\left| \frac{1}{\tau} \int_{t_{n-1}}^{t_n} \left[ m_h(\matdev_h \Pi_h u, \underline{\phi_h^n}(t)) - m(\matdev_\ell \pi_h u,(\underline{\phi_h^n}(t))^\ell ) \right] + \left[m(\matdev_\ell \pi_h u - \matdev_\ell u,(\underline{\phi_h^n}(t))^\ell ) \right] \, dt \right| \\
		\leq \frac{Ch^{k+1}}{\sqrt{\tau}} \left( \int_{t_{n-1}}^{t_n} \left( \|u\|_{H^{k+1}(\Gamma(t))}^2 + \|\matdev u \|_{H^{k+1}(\Gamma(t))}^2 \right) \,dt \right)^{\frac{1}{2}} \|\phi_h^n\|_{L^2(\Gamma_h^n)},
	\end{align*}
	and
	\begin{align*}
		\left| \frac{1}{\tau} \int_{t_{n-1}}^{t_n} \left[ g_h(\Pi_h u, \underline{\phi_h^n}(t)) - g_\ell(\pi_h u, (\underline{\phi_h^n}(t))^\ell) \right] + \left[ g_\ell(\pi_h u - u, (\underline{\phi_h^n}(t))^\ell) ) \right] \ \, dt \right|\\
		\leq \frac{Ch^{k+1}}{\sqrt{\tau}} \left(\int_{t_{n-1}}^{t_n} \|u\|_{H^{k+1}(\Gamma(t))}^2 \right)^{\frac{1}{2}} \|\phi_h^n\|_{L^2(\Gamma_h^n)}.
	\end{align*}
	Combining these bounds in \eqref{smootherror2} gives the desired bound for $I_1$.
	We also bound $I_2$ by
	\[I_2 = m(u^n, (\matdev_\ell - \matdev)(\phi_h^n)^\ell) \leq C h^{k+1} \|u^n\|_{L^2(\Gamma(t_n))} \| \phi_h^n\|_{H^1(\Gamma_h^n)},\]
	where we have used \eqref{derivativedifference1}. \qed
\end{proof}
We note that only $E_3$ required the potential to be quadratic here as we explicitly use the Lipschitz continuity of $F_1'$.
With these bounds established it remains to show the error bound.
\begin{theorem}
	\label{smootherrortheorem}
	Given initial data $u_0 \in H^{k+1}(\Gamma(0))$, $U_{h,0} = \Pi_h u_0$, and assuming \eqref{smoothassumptions1}, \eqref{smoothassumptions2} hold and the timestep size is such that
    \[\tau \left( \varepsilon^2 + \frac{C\theta}{\varepsilon} + \frac{L_F^2 + C\theta^2}{\varepsilon^2} \right) \leq \frac{\varepsilon}{4},\]
    then we have the following:
	\begin{gather}
		\begin{aligned}
		    \varepsilon \max_{n=1,...,N_T}\|(u^n)^{-\ell} - U_h^n\|_{L^2(\Gamma_h^n)}^2 + \varepsilon h^2 \max_{n=1,...,N_T} \|\gradgh ((u^n)^{-\ell} - U_h^n)\|_{L^2(\Gamma_h^n)}^2\\
            \leq C_h h^{2({k+1})} + C_\tau \tau^2,
		\end{aligned}\label{smootherroru}\\
        \begin{aligned}
		\tau\sum_{n=1}^{N_T} \|(w^n)^{-\ell} - W_h^n\|_{L^2(\Gamma_h^n)}^2 + \tau h^2 \sum_{n=1}^{N_T}\|\gradgh ((w^n)^{-\ell} - W_h^n)\|_{L^2(\Gamma_h^n)}^2 \leq C_h h^{2({k+1})} + C_\tau \tau^2,
        \end{aligned}\label{smootherrorw}
	\end{gather}
	where
	\begin{gather*}
		C_{\tau} = C \int_0^T \|\matdev u\|_{L^2(\Gamma(t))}^2 + \|(\matdev)^2 u\|_{L^2(\Gamma(t))}^2 \, dt,\\
		C_h = C \int_0^{T} \|\matdev u \|_{H^{k+1}(\Gamma(t))}^2 \,dt + C \sup_{t \in [0,T]} \left(\|w\|_{H^{k+1}(\Gamma(t))}^2 + \frac{\theta^2 \|u\|_{H^{k+1}(\Gamma(t))}^2}{\varepsilon^2} \right) + \frac{CL_F^2}{\varepsilon^2}.
	\end{gather*}
\end{theorem}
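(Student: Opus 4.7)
The plan is to adapt the approach of Proposition \ref{continuous dependence} to the splitting $(u^n)^{-\ell} - U_h^n = \rho_u^n + \sigma_u^n$ and $(w^n)^{-\ell} - W_h^n = \rho_w^n + \sigma_w^n$. Since the Ritz parts $\rho_u^n, \rho_w^n$ are already optimally controlled by \eqref{ritz2}, \eqref{ritz3}, the task reduces to estimating $\sigma_u^n, \sigma_w^n$. First I would verify that $\sigma_u^n$ has zero mean on $\Gamma_h^n$, combining mass conservation of $u^n$ (testing \eqref{cheqn1} with $\eta = 1$), mass conservation of $U_h^n$ (testing \eqref{fdiscfecheqn3} with $\phi_h^n = 1$, using $\underline{1} = 1$ and $a_h(W_h^n,1)=0$), the mean-preserving property of the Ritz projection, and the choice $U_{h,0}=\Pi_h u_0$. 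This legitimises the use of the discrete inverse Laplacian $\invshn{n}\sigma_u^n$.

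I would then test \eqref{smootherroreqn1} with $\phi_h^n = \invshn{n}\sigma_u^n$ and \eqref{smootherroreqn2} with $\phi_h^n = \sigma_u^n$, noting $a_h(\sigma_w^n, \invshn{n}\sigma_u^n) = m_h(\sigma_w^n, \sigma_u^n)$. Convexity of $F_1$ makes the resulting monotone nonlinear term non-negative, so it can be dropped. After the time-projection manipulations used in the proof of Proposition \ref{continuous dependence}, with $(\cdot)_+$ and $(\cdot)_{++}$ from \eqref{timeproj}, \eqref{timeproj3}, I obtain
\begin{multline*}
\tfrac{1}{2\tau}\left(\normshn{\sigma_u^n}{n}^2 - \normshn{\sigma_u^{n-1}}{n-1}^2\right) + \varepsilon \|\gradgh \sigma_u^n\|_{L^2(\Gamma_h^n)}^2 \\
\leq \tfrac{\theta}{\varepsilon}\|\sigma_u^n\|_{L^2(\Gamma_h^n)}^2 + C\normshn{\sigma_u^{n-1}}{n-1}^2 + E_1(\invshn{n}\sigma_u^n) + (E_2+E_3+E_4)(\sigma_u^n).
\end{multline*}
The $\theta/\varepsilon$ term is absorbed exactly as in \eqref{stability4} via the interpolation $\|\sigma_u^n\|_{L^2}^2 \leq \normshn{\sigma_u^n}{n}\|\gradgh\sigma_u^n\|_{L^2}$ and Young's inequality, producing a $\tfrac{\varepsilon}{2}\|\gradgh\sigma_u^n\|^2$ contribution absorbed into the LHS and a $\tfrac{\theta^2}{\varepsilon^3}\normshn{\sigma_u^n}{n}^2$ contribution passed into the Gronwall iteration.

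Next I would bound the consistency errors via Lemma \ref{smootherrorlemma}. Since $\invshn{n}\sigma_u^n$ has zero mean, Poincaré yields $\|\invshn{n}\sigma_u^n\|_{H^1(\Gamma_h^n)} \leq C\normshn{\sigma_u^n}{n}$, so $|E_1(\invshn{n}\sigma_u^n)|$ is controlled by $\normshn{\sigma_u^n}{n}$ with prefactors that after multiplication by $\tau$ and summation deliver $h^{2(k+1)}$ (from the $h^{k+1}/\sqrt\tau$ piece) and $\tau^2$ (from the $\sqrt\tau$ piece). The remaining terms $E_2,E_3,E_4$ applied to $\sigma_u^n$ are of the form $Ch^{k+1}\|\sigma_u^n\|_{L^2}$, absorbed by the same interpolation plus Young's inequality, with $L_F$ entering through $E_3$. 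Summing over $n=1,\ldots,N$ and applying a discrete Gronwall inequality yields
\[ \max_n \normshn{\sigma_u^n}{n}^2 + \varepsilon\tau\sum_{n=1}^N \|\gradgh\sigma_u^n\|_{L^2(\Gamma_h^n)}^2 \leq C_h h^{2(k+1)} + C_\tau \tau^2. \]
The two bounds in \eqref{smootherroru} then follow via the interpolation $\|\sigma_u^n\|_{L^2}^2\leq \normshn{\sigma_u^n}{n}\|\gradgh\sigma_u^n\|_{L^2}$ combined with an inverse inequality on $S_h^n$ and the triangle inequality against the Ritz estimates. For the $w$-bound \eqref{smootherrorw}, I would test \eqref{smootherroreqn2} with $\phi_h^n = \sigma_w^n$ (after handling its mean value using \eqref{fdiscfecheqn2} with $\phi_h^n=1$), express $\|\sigma_w^n\|_{L^2}^2$ in terms of $\|\gradgh\sigma_u^n\|^2$, $\|\sigma_u^n\|_{L^2}^2$ and the $E_{2,3,4}$ consistency terms, and sum in $\tau$.

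The hard part will be precisely the $E_1$ consistency bound: its dominant pieces $h^{k+1}/\sqrt\tau$ and $\sqrt\tau$ arise respectively from the Ritz approximation of $\matdev u$ and the backward-Euler truncation of $\matdev u$, and their squared $\tau$-sums are what deliver the $h^{2(k+1)}$ and $\tau^2$ rates. Choosing Young's-inequality constants so that all $\|\gradgh\sigma_u^n\|^2$ cross terms fit under the single available $\varepsilon\|\gradgh\sigma_u^n\|^2$ dissipation while the corrections remain Gronwall-summable is the delicate calculation. A secondary obstacle is the bookkeeping required to propagate the $H^{-1}_h$-type norm $\normshn{\cdot}{n}$ across adjacent surfaces $\Gamma_h^{n-1}$, $\Gamma_h^n$, already addressed in the continuous-dependence argument via the projections $(\cdot)_+$ and $(\cdot)_{++}$.
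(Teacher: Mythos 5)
Your core energy argument departs from the paper's in a way that will not actually deliver the stated rate for the first estimate, \eqref{smootherroru}. The paper tests \eqref{smootherroreqn1} with $\varepsilon\sigma_u^n$ and \eqref{smootherroreqn2} with $\sigma_w^n$, which yields an evolution inequality with $\|\sigma_u^n\|_{L^2(\Gamma_h^n)}^2$ as the Gr\"onwall quantity, and separately tests \eqref{smootherroreqn2} with $\sigma_u^n$ to recover the $\varepsilon\|\gradgh\sigma_u^n\|_{L^2}^2$ dissipation; this produces an $L^\infty_{L^2}$ estimate for $\sigma_u$ directly, from which the $H^1$-seminorm bound in \eqref{smootherroru} follows by an inverse inequality (dropping $h^2$). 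You instead test \eqref{smootherroreqn1} with $\invshn{n}\sigma_u^n$ and \eqref{smootherroreqn2} with $\sigma_u^n$, mimicking Proposition \ref{continuous dependence}. That natural choice gives an $L^\infty_{H^{-1}_h} \cap L^2_{H^1}$-type estimate, $\max_n\normshn{\sigma_u^n}{n}^2 + \varepsilon\tau\sum_n\|\gradgh\sigma_u^n\|_{L^2}^2 \lesssim C_h h^{2(k+1)}+C_\tau\tau^2$. This is precisely the weaker bound the paper itself sketches (and does not pursue to conclusion) in the final paragraph of section \ref{section:implicitexplicit}, where it notes the outcome is an optimal $L^2_{H^1}$ error bound.

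The problem is in the claimed upgrade. You propose recovering \eqref{smootherroru} via $\|\sigma_u^n\|_{L^2}^2\le\normshn{\sigma_u^n}{n}\|\gradgh\sigma_u^n\|_{L^2}$ and an inverse inequality, but the inverse inequality $\|\gradgh\sigma_u^n\|_{L^2}\le Ch^{-1}\|\sigma_u^n\|_{L^2}$ then yields only $\|\sigma_u^n\|_{L^2}\le Ch^{-1}\normshn{\sigma_u^n}{n}$, i.e.\ $\max_n\varepsilon\|\sigma_u^n\|_{L^2}^2\lesssim h^{-2}(C_h h^{2(k+1)}+C_\tau\tau^2)$. That is suboptimal by a factor $h^{-2}$ relative to \eqref{smootherroru} and also turns the temporal term into $\tau^2/h^2$. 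The $\tau$-weighted $L^2_{H^1}$ control cannot be used pointwise in $n$ to close the interpolation either, so there is no way around this within your framework. To obtain the theorem as stated you need to work with $\|\sigma_u^n\|_{L^2}^2$ itself as the evolving quantity, as the paper does, paying for the nonlinear term with the Lipschitz constant $L_F$ (hence the $L_F^2/\varepsilon^2$ in $C_h$ and the $\tau$-smallness condition $\tau(\varepsilon^2+C\theta/\varepsilon+(L_F^2+C\theta^2)/\varepsilon^2)\le\varepsilon/4$) rather than dropping it by monotonicity; the monotonicity-based $H^{-1}$ route buys a milder timestep restriction but a genuinely weaker norm.

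A secondary, smaller point: the paper's treatment of $\|\sigma_w^n\|_{L^2}^2$ comes directly from testing \eqref{smootherroreqn2} with $\sigma_w^n$ in the main energy identity, not as a follow-on step; your plan to derive it after the fact from $\|\gradgh\sigma_u^n\|^2$ and $\|\sigma_u^n\|_{L^2}^2$ would again feed in the suboptimal $\|\sigma_u^n\|_{L^2}$ bound.
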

\begin{proof}
	We firstly test \eqref{smootherroreqn1} with $\varepsilon\sigma_u^n$ and \eqref{smootherroreqn2} with $\sigma_w^n$ to obtain
	\begin{align*}
		\frac{\varepsilon}{\tau} \left( m_h(\sigma_u^n, \sigma_u^n) - m_h(\sigma_u^{n-1}, \underline{\sigma_u^n}) \right) + \varepsilon a_h(\sigma_w^n , \sigma_u^n) = \varepsilon E_1(\sigma_u^n),
	\end{align*}
	and
	\begin{multline*}
		\varepsilon a_h(\sigma_u^n, \sigma_w^n) = m_h(\sigma_w^n, \sigma_w^n) - \frac{1}{\varepsilon}m_h(F_1'(\Pi_h u^n) - F_1'(U_h^n), \sigma_w^n) + \frac{\theta}{\varepsilon} m_h(\sigma_u^n, \sigma_w^n)\\
		- E_2(\sigma_w^n) - E_3(\sigma_w^n) - E_4(\sigma_w^n).
	\end{multline*}
	Then, similar to as we have done before (see \eqref{discstab2}), we see that
	\begin{align*}
		\frac{1}{\tau} \left( m_h(\sigma_u^n, \sigma_u^n) - m_h(\sigma_u^{n-1}, \underline{\sigma_u^n}) \right) &= \frac{1}{2\tau} \left( m_h(\sigma_u^n, \sigma_u^n) - m_h(\sigma_u^{n-1}, \sigma_u^{n-1}) \right)\\
        &+ \frac{\tau}{2} m_h(\matdev_h \sigma_u^n, \matdev_h \sigma_u^n)\\
	&+ \frac{1}{2 \tau}\left[ m_h(\sigma_u^{n-1},\sigma_u^{n-1}) - m_h(\overline{\sigma_u^{n-1}},\overline{\sigma_u^{n-1}}) \right].
	\end{align*}
	Hence combining all of the above we find that
	\begin{multline}
		\frac{\varepsilon}{2\tau} \left( m_h(\sigma_u^n, \sigma_u^n) - m_h(\sigma_u^{n-1}, \sigma_u^{n-1}) \right) + \frac{\varepsilon\tau}{2} m_h(\matdevtau \sigma_u^n, \matdevtau \sigma_u^n) + m_h(\sigma_w^n, \sigma_w^n)\\
		= \frac{1}{\varepsilon}m_h(F_1'(\Pi_h u^n) - F_1'(U_h^n), \sigma_w^n) + \frac{\theta}{\varepsilon} m_h(\sigma_u^n, \sigma_w^n) + \frac{\varepsilon}{2 \tau}\left[ m_h(\sigma_u^{n-1},\sigma_u^{n-1}) - m_h(\overline{\sigma_u^{n-1}},\overline{\sigma_u^{n-1}}) \right]\\
		+ \varepsilon E_1(\sigma_u^n) + E_2(\sigma_w^n) + E_3(\sigma_w^n) + E_4(\sigma_w^n). \label{smootherror3}
	\end{multline}
	From the Lipschitz continuity of $F_1'$ we find that
	$$ \left| \frac{1}{\varepsilon}m_h(F_1'(\Pi_h u^n) - F_1'(U_h^n), \sigma_w^n) \right| \leq \frac{L_F}{\varepsilon} \|\sigma_u^n\|_{L^2(\Gamma_h^n)}\|\sigma_w^n\|_{L^2(\Gamma_h^n)},$$
	and from using \eqref{timeperturb1}, \eqref{timenorm3} we find
	$$ \frac{\varepsilon}{2 \tau}\left| m_h(\sigma_u^{n-1},\sigma_u^{n-1}) - m_h(\overline{\sigma_u^{n-1}},\overline{\sigma_u^{n-1}}) \right| \leq C \varepsilon \|\sigma_u^{n-1}\|_{L^2(\Gamma_h^{n-1})}^2 .$$
	Now by using these bounds, the bounds in Lemma \ref{smootherrorlemma}, and Young's inequality in \eqref{smootherror3} we obtain
	\begin{multline}
		\label{smootherror4}
		\frac{\varepsilon}{2 \tau} \left( \|\sigma_u^n\|_{L^2(\Gamma_h^n)}^2 - \|\sigma_u^{n-1}\|_{L^2(\Gamma_h^{n-1})}^2 \right) + \frac{\varepsilon \tau}{2 } \| \matdevtau \sigma_u^n\|_{L^2(\Gamma_h^n)}^2 + \|\sigma_w^n\|_{L^2(\Gamma_h^n)}^2 \leq C \varepsilon \|\sigma_u^{n-1}\|_{L^2(\Gamma_h^{n-1})}^2\\
		+ \frac{1}{4} \|\sigma_w^{n}\|_{L^2(\Gamma_h^{n})}^2+ \left( \varepsilon^2 + \frac{L_F^2 + C\theta^2}{\varepsilon^2} \right) \|\sigma_u^{n}\|_{L^2(\Gamma_h^{n})}^2 + \sum_{i=1}^5(\hat{E}_i^n)^2 + \varepsilon\| \gradgh \sigma_u^n\|_{L^2(\Gamma_h^n)}^2,
	\end{multline}
	where
	\begin{multline*}
		\hat{E}_1^n = \frac{Ch^{k+1}}{\sqrt{\tau}} \left( \int_{t_{n-1}}^{t_n}  \|u\|_{H^{k+1}(\Gamma(t))}^2 + \|\matdev u \|_{H^{k+1}(\Gamma(t))}^2 \,dt \right)^{\frac{1}{2}}\\
		+ C \sqrt{\tau} \left(\int_{t_{n-1}}^{t_n} \|\matdev u\|_{L^2(\Gamma(t))}^2 + \|(\matdev)^2 u\|_{L^2(\Gamma(t))}^2 \, dt \right)^{\frac{1}{2}},
	\end{multline*}
	\begin{gather*}
		\hat{E}_2^n = Ch^{k+1} \|w^n\|_{H^{k+1}(\Gamma(t_n))},\\
		\hat{E}_3^n = \frac{CL_F h^{k+1}}{\varepsilon}\left( 1 + \|u^n\|_{H^{k+1}(\Gamma(t_n))} \right),\\
		\hat{E}_4^n = \frac{C\theta h^{k+1}}{\varepsilon} \|u^n\|_{H^{k+1}(\Gamma(t_n))},\\
		\hat{E}_5^n = C h^{k+1} \|u^n\|_{L^2(\Gamma(t_n))}.
	\end{gather*}
	We want to bound the $\|\gradgh \sigma_u^n\|_{L^2(\Gamma_h^n)}$ term in \eqref{smootherror4}, and to do this we test \eqref{smootherroreqn2} with $\sigma_u^n$ for
	\begin{multline*}
		\varepsilon \|\gradgh \sigma_u^n\|_{L^2(\Gamma_h^n)}^2 = m_h(\sigma_w^n,\sigma_u^n) - \frac{1}{\varepsilon}m_h(F_1'(\Pi_h u^n) - F_1'(U_h^n), \sigma_u^n) + \frac{\theta}{\varepsilon}\|\sigma_u^n\|_{L^2(\Gamma_h^n)}^2\\
        - \sum_{i=2}^4 E_i(\sigma_u^n)
	\end{multline*}
	Monotonicity of $F_1'$, Young's inequality, and the bounds in Lemma \ref{smootherrorlemma} then yield
	\begin{align}
		\varepsilon \|\gradgh \sigma_u^n\|_{L^2(\Gamma_h^n)}^2 = \frac{1}{4}\|\sigma_w^n\|_{L^2(\Gamma_h^n)}^2 + \frac{C\theta}{\varepsilon}\|\sigma_u^n\|_{L^2(\Gamma_h^n)}^2 + \sum_{i=2}^4 (\hat{E}_i^n)^2 \label{smootherror5}
	\end{align}
	
	Using \eqref{smootherror5} in \eqref{smootherror4}, taking $\tau$ sufficiently small, in particular
	\[\tau \left( \varepsilon^2 + \frac{C\theta}{\varepsilon} + \frac{L_F^2 + C\theta^2}{\varepsilon^2} \right) \leq \frac{\varepsilon}{4} ,\]
	summing over $n=1,...,N_T$ we find
	\begin{multline*}
		\frac{\varepsilon}{4}\|\sigma_u^{N_T}\|_{L^2(\Gamma_h^{N_T})}^2 +  \frac{\varepsilon \tau^2}{2 } \sum_{n=1}^{N_T}\| \matdevtau \sigma_u^n\|_{L^2(\Gamma_h^n)}^2 + \frac{\tau}{2} \sum_{n=1}^{N_T} \|\sigma_w^n\|_{L^2(\Gamma_h^n)}^2 \leq  C \varepsilon \tau \sum_{n=1}^{N_T} \|\sigma_u^{n-1}\|_{L^2(\Gamma_h^{n-1})}^2\\
		+ C_{\tau} \tau^2 + C_h h^{2({k+1})},
	\end{multline*}
	where $C_h, C_\tau$ are as given above.
	We can then apply a discrete Gr\"onwall inequality (noting $\sigma_u^0 \equiv 0$) for the $L^2$ error bounds for $\sigma_u^n, \sigma_w^n$.
	The $L^2$ bounds in \eqref{smootherroru}, \eqref{smootherrorw} follow as by \eqref{ritz3} we also have
	\begin{gather*}
		\begin{aligned}\max_{n=1,...,N_T} \|(u^n)^{-\ell} - \Pi_h u^n\|_{L^2(\Gamma_h^n)}^2 &= \max_{n=1,...,N_T} \|\rho_u^n\|_{L^2(\Gamma_h^n)}^2\\
        &\leq Ch^{2({k+1})} \sup_{t \in [0,T]} \|u\|_{H^{k+1}(\Gamma(t)}^2,
        \end{aligned}\\
	\begin{aligned}\tau \sum_{n=1}^{N_T} \|(w^n)^{-\ell} - \Pi_h w^n\|_{L^2(\Gamma_h^n)}^2 &= \tau \sum_{n=1}^{N_T} \|\rho_w^n\|_{L^2(\Gamma_h^n)}^2\\
    &\leq Ch^{2({k+1})} \sup_{t \in [0,T]} \|w\|_{H^{k+1}(\Gamma(t)}^2.
    \end{aligned}
	\end{gather*} 
	The $H^1$ error bound follows by applying an inverse inequality (see for example \cite{brenner2008mathematical}), and the $H^1$ bound in \eqref{ritz3}. \qed
\end{proof}
\begin{remark}
	From \eqref{smoothstability} it is clear that we obtain optimal order bounds in the $H^1$ seminorm provided 
	\[\int_{\Gamma_h^0} U_{h,0} = \int_{\Gamma(0)} u_0, \qquad \| \gradgh (\Pi_h u_0 - U_{h,0}) \|_{L^2(\Gamma_h^0)} \leq Ch^k,\]
	but it is not clear that one obtains optimal order in the $L^2$ norm.
\end{remark}

\subsection{Non-quadratic potentials}
We now return to the general setting of a nonlinearity with polynomial growth, $q>1$.
As before, we assume that the solution pair $(u,w)$ solving \eqref{cheqn1}, \eqref{cheqn2} is such that \eqref{smoothassumptions1}, \eqref{smoothassumptions2} hold.
In particular, we see that $u \in H^1_{H^2}$ and by embeddings of Sobolev and evolving Sobolev-Bochner spaces (see \cite{alphonse2023function}) we see $u \in C^0_{C^0}$.
Thus we find there exists some constant $u_{\text{max}}$ such that
$$ \max_{t \in [0,T]} \max_{x \in \Gamma(t)} |u(x,t)| \leq u_{\text{max}},$$
and moreover, $u_{\text{max}}$ is determined by the choice of initial data.
Motivated by this observation, we can modify $F_1$ to be quadratic past some given value, which will allow us to show stability and perform error analysis.
We define $F_M \in C^2(\mathbb{R})$ by
$$ F_M(r) := \begin{cases}
	F_1(M) + F'_1(M) (r-M) + \frac{F_1''(M)}{2}(r-M)^2, & r > M,\\
	F_1(r), & |r| \leq M,\\
	F_1(-M) + F'_1(-M) (r+M) + \frac{F_1''(-M)}{2}(r+M)^2, & r < -M,
\end{cases}
$$
where $M > u_{\text{max}}$ is a constant to be determined.
We note that $F_M'$ is $C^1$ and Lipschitz continuous with Lipschitz constant $F_1''(M)$.
By construction we see that $F_M$ satisfies our assumptions on $F_1$ (it is convex for example), and we can apply the results for a quadratic potential.
In particular, Lemma \ref{implicitstability} (and a discrete Gr\"onwall inequality) shows we have a numerically stable scheme, and Theorem \ref{smootherrortheorem} provides an error bound for this modified potential.\\

What remains to be seen is that we can choose $M$, independent of $h,\tau$, so that the $|U_h^n| \leq M$ for all $n=1,...,N_T$.
This then shows, by considering the results on the quadratic potential, that we have a numerically stable scheme with optimal order error bounds for a general nonlinearity with polynomial growth $(q>1)$.
This is the content of the following lemma.
\begin{lemma}
	Assume \eqref{smoothassumptions1}, \eqref{smoothassumptions2} hold, $h$ is sufficiently small, and $\tau$ is chosen so that $\tau \leq C h^{1+\gamma}$ for some positive constants $C, \gamma$.
	Then there exists some $M$ independent of $\tau, h$ such that
	\[\max_{t \in [0,T]} \max_{x \in \Gamma(t)} |u(x,t)| \leq M, \qquad \max_{n = 1,...,N_T} \max_{x \in \Gamma_h(t)} |U_h^n(x,t)| \leq M,\]
	where $(U_h^n, W_h^n)$ solves \eqref{fdiscfecheqn1}, \eqref{fdiscfecheqn2} for the modified potential $F_M$.
\end{lemma}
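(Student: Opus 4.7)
The continuous bound is immediate: the regularity assumption \eqref{smoothassumptions1} combined with the Sobolev--Bochner embedding noted just before the lemma gives $u\in C^0_{C^0}$, so $u_{\max}:=\sup_{t,x}|u(x,t)|<\infty$ and any $M>u_{\max}$ suffices for the first inequality. The discrete bound is the real content, and the plan is to exploit the fact that Theorem \ref{smootherrortheorem} applies verbatim to the modified scheme (since $F_M'$ is globally Lipschitz with constant $F_1''(M)$), then convert its $L^2$ error estimate into an $L^\infty$ estimate by an inverse inequality. The CFL-type condition $\tau\leq Ch^{1+\gamma}$ is there precisely to absorb the loss of one power of $h$ incurred by this conversion.

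Concretely, I would first fix $M$ as
\[M := C_R\,\sup_{t\in[0,T]}\|u(t)\|_{H^2(\Gamma(t))}+1,\]
where $C_R$ is the constant from \eqref{ritz4}. This choice is certainly $>u_{\max}$ by the Sobolev embedding $H^2\hookrightarrow L^\infty$ on a closed two-dimensional surface, and it yields the uniform bound $\|\Pi_h u^n\|_{L^\infty(\Gamma_h^n)}\leq M-1$ from \eqref{ritz4}. With $M$ (and hence $L_{F_M}=F_1''(M)$) frozen, Theorem \ref{smootherrortheorem} applies directly to the scheme driven by $F_M$ and gives
\[\max_{n=1,\ldots,N_T}\|\sigma_u^n\|_{L^2(\Gamma_h^n)}\leq C\bigl(h^{k+1}+\tau\bigr),\]
with a constant $C=C(M,u,w,T,\varepsilon,\theta)$ independent of $h$ and $\tau$.

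Now I would invoke the standard inverse inequality on the uniformly quasi-uniform isoparametric surface finite element space, which in two surface dimensions gives $\|\phi_h\|_{L^\infty(\Gamma_h^n)}\leq C h^{-1}\|\phi_h\|_{L^2(\Gamma_h^n)}$ for $\phi_h\in S_h^n$. Applied to $\sigma_u^n$ together with the hypothesis $\tau\leq Ch^{1+\gamma}$,
\[\|\sigma_u^n\|_{L^\infty(\Gamma_h^n)}\leq Ch^{-1}\|\sigma_u^n\|_{L^2(\Gamma_h^n)}\leq C\bigl(h^{k}+\tau h^{-1}\bigr)\leq C\bigl(h^{k}+h^\gamma\bigr)\xrightarrow{h\to 0}0.\]
Combining,
\[\|U_h^n\|_{L^\infty(\Gamma_h^n)}\leq \|\sigma_u^n\|_{L^\infty(\Gamma_h^n)}+\|\Pi_h u^n\|_{L^\infty(\Gamma_h^n)}\leq C\bigl(h^k+h^\gamma\bigr)+(M-1),\]
so for $h$ sufficiently small the right-hand side is $\leq M$, uniformly in $n=1,\ldots,N_T$.

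The main obstacle is more conceptual than technical: the error constant in Theorem \ref{smootherrortheorem} depends on $M$ via $L_{F_M}=F_1''(M)$, and $M$ in turn must be large enough to absorb the discrete error, so the argument has the appearance of a circular dependence. This is resolved by ordering the choices correctly---$M$ is picked first using only the continuous solution $u$, which locks down both $L_{F_M}$ and the error constant as fixed numbers, and only then is $h$ shrunk. It is also worth noting that the scaling $\tau\leq Ch^{1+\gamma}$ cannot be weakened by this method: without some $\gamma>0$, the quantity $\tau/h$ need not vanish, the $L^\infty$ control of $\sigma_u^n$ breaks, and the bootstrap collapses. Once the lemma is established, the discrete solution satisfies $|U_h^n|\leq M$, hence $F_M'(U_h^n)=F_1'(U_h^n)$, so the modified scheme coincides with the original and all previous results (stability via Lemma \ref{implicitstability} with a discrete Gr\"onwall, and the optimal error bound of Theorem \ref{smootherrortheorem}) transfer to the original potential.
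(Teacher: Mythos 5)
Your proof is correct and follows essentially the same route as the paper: split $\|U_h^n\|_{L^\infty}$ via $\Pi_h u^n$, bound the projection term uniformly by \eqref{ritz4}, convert the $L^2$ estimate from Theorem \ref{smootherrortheorem} to $L^\infty$ by an inverse inequality, and use $\tau\leq Ch^{1+\gamma}$ to kill the $\tau/h$ contribution. Your explicit formula for $M$ and your discussion of why fixing $M$ before shrinking $h$ resolves the apparent circularity is a clean articulation of what the paper leaves implicit in the final sentence of its proof.
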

\begin{proof}
	We firstly write
	$$ \|U_h^n\|_{L^{\infty}(\Gamma_h^n)} \leq \|\Pi_h u^n\|_{L^{\infty}(\Gamma_h^n)} + \|U_h^n - \Pi_h u^n\|_{L^{\infty}(\Gamma_h^n)}.$$
	Hence using \eqref{ritz4}, and the bound from \eqref{smootherroru} we see
	\begin{align*}
	    \|U_h^n\|_{L^{\infty}(\Gamma_h^n)} &\leq C \sup_{t \in [0,T]} \|u\|_{H^2(\Gamma(t))} + C_h h^k + \frac{C_{\tau} \tau}{h}\\
        &\leq C \sup_{t \in [0,T]} \|u\|_{H^2(\Gamma(t))} + (C_h + C_{\tau})h^\gamma,
	\end{align*}
	where we have used an inverse inequality, and $C_h, C_{\tau}$ are as in Theorem \ref{smootherrortheorem}.
	Note that in using these error bounds we have a constraint on $\tau$ given by
	$$ \tau \left( \varepsilon^2 + \frac{C\theta}{\varepsilon} +  \frac{F_1''(M)^2 + C\theta^2}{\varepsilon^2} \right) \leq \frac{\varepsilon}{4}.$$
	The result clearly holds if
	$$C \sup_{t \in [0,T]} \|u\|_{H^2(\Gamma(t))} + (C_h + C_{\tau})h \leq M,$$
	where we recall that $C_h$ depends on $F_1''(M)$.
	Hence we can fix some $M > u_{\text{max}}$ and take $h$ sufficiently small so that the above holds. \qed
\end{proof}
This result says that although we have considered a numerical scheme with a modified potential, for sufficiently small $\tau, h$ (determined by the solution pair $(u,w)$ solving \eqref{cheqn1}, \eqref{cheqn2}) the numerical solution $(U_h^n, W_h^n)$ ``doesn't see'' the modification and we can still analyse it as we did for the quadratic potential.
In particular, we have the following result.

\begin{corollary}
	Let $F_1$ satisfy the assumptions at the beginning of this chapter, with polynomial growth of $F_1'$ of order $q\geq1$.
	Then, given \eqref{smoothassumptions1}, \eqref{smoothassumptions2} hold, for sufficiently small $\tau, h$ the scheme \eqref{fdiscfecheqn1}, \eqref{fdiscfecheqn2} is numerically stable and has optimal order error bounds when we consider $U_{h,0} = \Pi_h u_0$.
	Moreover from Proposition \ref{continuous dependence} we obtain an optimal order ${H^1}$ error bound if we consider initial data such that
	\[\int_{\Gamma_h^0} U_{h,0} = \int_{\Gamma(0)} u_0, \qquad \| \gradgh (\Pi_h u_0 - U_{h,0}) \|_{L^2(\Gamma_h^0)} \leq Ch^k,\]
	for some constant $C$.
\end{corollary}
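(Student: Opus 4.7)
The plan is to combine the preceding lemma with Lemma \ref{implicitstability}, Theorem \ref{smootherrortheorem} and Proposition \ref{continuous dependence}. The preceding lemma already does the heavy lifting: for the scheme driven by the truncated potential $F_M$ and the specific initial data $U_{h,0} = \Pi_h u_0$, the discrete solution satisfies $\|U_h^n\|_{L^\infty(\Gamma_h^n)} \le M$ for all $n$, provided $h$ is sufficiently small and $\tau \le C h^{1+\gamma}$. Because $F_M \equiv F_1$ on $[-M,M]$, the sequence $(U_h^n, W_h^n)$ is simultaneously a solution of the scheme with the original potential $F_1$. Since $F_M'$ has linear growth ($q=1$) and is globally Lipschitz with constant $F_1''(M)$ (fixed once $M$ is fixed, independently of $h, \tau$), Lemma \ref{implicitstability} together with the standard discrete Grönwall inequality (available in the $q=1$ case) gives numerical stability of the scheme, and Theorem \ref{smootherrortheorem} delivers the optimal order error bounds. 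This proves the first part of the corollary.

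For the ``Moreover'' statement, let $(U_h^n, W_h^n)$ denote the solution associated with $\Pi_h u_0$, and let $(\widetilde U_h^n, \widetilde W_h^n)$ denote the solution of the scheme with potential $F_M$ associated with the alternative initial data $U_{h,0}$; existence of the latter follows from Lemma \ref{fullyimp existence}, and the two solutions share the same total mass by the hypothesis on $U_{h,0}$. Proposition \ref{continuous dependence} then yields
\begin{equation*}
\tau \sum_{n=1}^{N_T} \|\gradgh(U_h^n - \widetilde U_h^n)\|_{L^2(\Gamma_h^n)}^2 \le C \, \normshn{\Pi_h u_0 - U_{h,0}}{0}^2.
\end{equation*}
Since $\Pi_h u_0 - U_{h,0}$ has zero mean, Poincaré's inequality together with the elementary bound $\normshn{v}{0} \le C \|v\|_{L^2(\Gamma_h^0)}$ controls the right-hand side by $C h^{2k}$, using the assumption $\|\gradgh(\Pi_h u_0 - U_{h,0})\|_{L^2(\Gamma_h^0)} \le C h^k$. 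A triangle inequality combined with the optimal $H^1$-seminorm bound for $U_h^n$ coming from Theorem \ref{smootherrortheorem} then yields the claimed optimal order $H^1$ error bound for $\widetilde U_h^n$.

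The point I expect to be the main obstacle is verifying that $\widetilde U_h^n$ itself satisfies $\|\widetilde U_h^n\|_{L^\infty(\Gamma_h^n)} \le M$, so that it is genuinely a solution of the scheme for $F_1$ rather than only for $F_M$. The preceding lemma's argument is stated for the specific choice $U_{h,0} = \Pi_h u_0$, and Proposition \ref{continuous dependence} only provides $\ell^2$-in-time control of $\|\gradgh(\widetilde U_h^n - U_h^n)\|_{L^2}$, not an $\ell^\infty$ one, so a naive inverse estimate is not sufficient. The cleanest resolution is to rerun the bootstrap of the preceding lemma with the hypothesis on $U_{h,0}$ in place of \eqref{discinitialdata}: the assumed bound $\|\gradgh(\Pi_h u_0 - U_{h,0})\|_{L^2(\Gamma_h^0)} \le C h^k$ is enough to keep the $L^\infty$ error small as $h \to 0$, and this together with \eqref{ritz4} forces $\widetilde U_h^n$ into $[-M,M]$ for $h$ small enough. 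Once this is in place, the rest of the argument is just bookkeeping.
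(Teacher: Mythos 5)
Your approach coincides with the paper's intended argument in both parts. For the first claim --- truncate to $F_M$, use the preceding lemma to confine $U_h^n$ to $[-M,M]$, then invoke Lemma~\ref{implicitstability} (with discrete Gr\"onwall, now valid since $F_M'$ has linear growth, $q=1$) and Theorem~\ref{smootherrortheorem} --- this is exactly the logic the paper gestures at, and your observation that $F_M'$ is globally Lipschitz with constant $F_1''(M)$ fixed once $M$ is fixed is the correct bookkeeping. For the ``Moreover'' part, writing $\widetilde U_h^n$ for the solution with the alternative initial data, your combination of Proposition~\ref{continuous dependence}, the bound $\normshn{\Pi_h u_0 - U_{h,0}}{0}\leq C\|\Pi_h u_0 - U_{h,0}\|_{L^2(\Gamma_h^0)}$, Poincar\'e using the shared mean and the $h^k$ gradient hypothesis, and a triangle inequality against the $\Pi_h u_0$ solution faithfully reproduces the remark following Theorem~\ref{smootherrortheorem}, transplanted to $F_M$.

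The $L^\infty$ concern you flag is a genuine subtlety the paper does not explicitly address, and you are right that the $\ell^2$-in-time control of $\|\gradgh(\widetilde U_h^n - U_h^n)\|_{L^2}$ delivered by Proposition~\ref{continuous dependence} is not enough for a pointwise-in-time bound. However, your proposed fix --- rerunning the bootstrap of the preceding lemma with $U_{h,0}$ in place of $\Pi_h u_0$ --- does not close the gap when $k=1$. In the Gr\"onwall step of Theorem~\ref{smootherrortheorem} with the alternative initial data one must carry $\|\sigma_u^0\|_{L^2(\Gamma_h^0)}=O(h^k)$ (one power of $h$ worse than the $\sigma_u^0=0$ case, this being the best available from Poincar\'e and the $h^k$ gradient hypothesis), so the Gr\"onwall output is only $\|\sigma_u^n\|_{L^2(\Gamma_h^n)}=O(h^k)$, and the two-dimensional inverse estimate $\|\phi_h\|_{L^\infty(\Gamma_h^n)}\leq C h^{-1}\|\phi_h\|_{L^2(\Gamma_h^n)}$ then gives $\|\sigma_u^n\|_{L^\infty}=O(h^{k-1})$, which does not vanish for $k=1$. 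So the statement ``the assumed bound $\ldots$ is enough to keep the $L^\infty$ error small'' is false at $k=1$ with the tools used in the preceding lemma; one would need something sharper, such as a logarithmic discrete Sobolev embedding $\|\phi_h\|_{L^\infty}\leq C(\log h^{-1})^{1/2}\|\phi_h\|_{H^1}$ together with a uniform-in-$n$ $H^1$ bound on $\sigma_u^n$, neither of which the present proof structure provides. The more conservative reading, consistent with the paper, is that the ``Moreover'' clause asserts an $H^1$ error bound for the $F_M$-truncated scheme only; the further claim that $\widetilde U_h^n$ solves the un-truncated $F_1$ scheme at $k=1$ requires an argument beyond what either you or the paper supply.
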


\begin{remark}
    We note that this corollary requires both the smallness condition $\tau = \mathcal{O}(\frac{\varepsilon^3}{\theta^2})$ and the CFL condition $\tau = \mathcal{O}(h^{1+\gamma})$ to hold.
\end{remark}

\section{An implicit-explicit scheme}
\label{section:implicitexplicit}
\subsection{Well-posedness}
In this section we outline how results in the previous two chapters change for an implicit-explicit scheme.
Here we consider the scheme such that one finds $U_h^n, W_h^n \in S_h^n$ such that
\begin{gather}\label{impexp eqn1}
	\frac{1}{\tau} \left(m_h(t_n;U_h^n, \phi_h^n) - m_h(t_{n-1};U_h^{n-1}, \underline{\phi_h^{n}}) \right) + a_h(t_n;W_h^n,\phi_h^n) = 0,\\ 
    \begin{aligned}
        m_h(t_n;W_h^n, \phi_h^n)  =\varepsilon a_{h}(t_n;U_h^n,\phi_h) + \frac{1}{\varepsilon} m_h(t_n;F_1'(U_h^n), \phi_h^n)\\
        - \frac{\theta}{\varepsilon} m_h(t_{n-1};U_h^{n-1}, \underline{\phi_h^n}),
    \end{aligned}\label{impexp eqn2}
\end{gather}
for all $\phi_h^{n}\in S_h^{n}$, given some initial data $U_h^0 = U_{h,0}$.
It is worth comparing this discretisation to the backwards differentiation formulae time discretisations proposed in \cite{beschle2022stability} in which the authors consider an extrapolated value in the nonlinear terms.
As before, the initial data $U_{h,0} \in S_h^0$ approximates some sufficiently regular function, $u_0$, on $\Gamma(0)$.
The benefit of this scheme is that it no longer enforces a timestep restriction for the uniqueness of the scheme.
We show this in the following lemma, which is analogous to Lemma \ref{implicit uniqueness}
\begin{lemma}
	Given some $U_{h,0} \in S_h^0$, then if the system \eqref{impexp eqn1}, \eqref{impexp eqn2} admits a solution pair $(U_h^n, W_h^n)$, it is unique.
\end{lemma}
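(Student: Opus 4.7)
The plan is to mirror the uniqueness proof of Lemma \ref{implicit uniqueness}, taking advantage of the fact that in the implicit-explicit splitting the concave quadratic term $-\frac{\theta}{\varepsilon} U_h^{n-1}$ is treated explicitly and therefore cancels in any difference of two solutions. This is precisely why no constraint on $\tau$ is needed.

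Suppose $(U_{h,i}^n, W_{h,i}^n)$ for $i = 1,2$ are two solution pairs of \eqref{impexp eqn1}, \eqref{impexp eqn2} sharing the same $U_h^{n-1}$. Setting $\widehat{U}_h^n := U_{h,1}^n - U_{h,2}^n$ and $\widehat{W}_h^n := W_{h,1}^n - W_{h,2}^n$, the term $-\frac{\theta}{\varepsilon} m_h(U_h^{n-1}, \underline{\phi_h^n})$ drops out of the difference of \eqref{impexp eqn2}, and I obtain, for every $\phi_h^n \in S_h^n$,
\begin{gather*}
    m_h(\widehat{U}_h^n, \phi_h^n) + \tau a_h(\widehat{W}_h^n, \phi_h^n) = 0,\\
    m_h(\widehat{W}_h^n, \phi_h^n) = \varepsilon a_h(\widehat{U}_h^n, \phi_h^n) + \frac{1}{\varepsilon} m_h\bigl(F_1'(U_{h,1}^n) - F_1'(U_{h,2}^n), \phi_h^n\bigr).
\end{gather*}

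The next step is to test the first equation with $\widehat{W}_h^n$ and the second with $\widehat{U}_h^n$. Subtracting the two resulting identities gives
\[
    \tau \|\gradgh \widehat{W}_h^n\|_{L^2(\Gamma_h^n)}^2 + \varepsilon \|\gradgh \widehat{U}_h^n\|_{L^2(\Gamma_h^n)}^2 + \frac{1}{\varepsilon} m_h\bigl(F_1'(U_{h,1}^n) - F_1'(U_{h,2}^n), \widehat{U}_h^n\bigr) = 0.
\]
Convexity of $F_1$ makes $F_1'$ monotone, so the last term is non-negative; the two gradient terms must therefore vanish individually.

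To conclude, I test \eqref{impexp eqn1} for each solution with $\phi_h^n = 1 \in S_h^n$ (noting $\underline{1} = 1 \in S_h^{n-1}$ and $a_h(\widehat{W}_h^n, 1) = 0$), which shows $m_h(\widehat{U}_h^n, 1) = 0$. Since $\widehat{U}_h^n$ has zero mean on $\Gamma_h^n$ and zero tangential gradient, Poincaré's inequality yields $\widehat{U}_h^n \equiv 0$. Plugging this back into the difference of \eqref{impexp eqn2} gives $m_h(\widehat{W}_h^n, \phi_h^n) = 0$ for all $\phi_h^n \in S_h^n$, hence $\widehat{W}_h^n \equiv 0$.

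There is no real obstacle here: the only reason the fully implicit case required $\tau < 4\varepsilon^3/\theta^2$ was the concave contribution, which is now handled at the previous time level and disappears from the difference equation. The remainder is a direct consequence of the convexity of $F_1$ and the Poincaré inequality.
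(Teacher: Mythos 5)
Your argument is correct and mirrors the paper's own proof step for step: form the differences, observe the explicit concave term cancels, test with $\widehat{W}_h^n$ and $\widehat{U}_h^n$, invoke monotonicity of $F_1'$ to kill both gradient terms, then use the zero-mean condition and Poincar\'e's inequality for $\widehat{U}_h^n = 0$, after which $\widehat{W}_h^n = 0$ follows. You spell out two small points the paper leaves implicit (testing \eqref{impexp eqn1} with $\phi_h^n = 1$ to get $\mval{\widehat{U}_h^n}{\Gamma_h^n} = 0$, and substituting $\widehat{U}_h^n = 0$ back into \eqref{impexp eqn2} to conclude $\widehat{W}_h^n = 0$), but the approach is identical.
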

\begin{proof}
	Assume there are two solution pairs $(U_{h,i}^{n},W_{h,i}^{n})$ ($i = 1,2$) of \eqref{impexp eqn1},\eqref{impexp eqn2}.
	We define the following differences
	\begin{gather*}
		\widehat{U}_h^n = U_{h,1}^{n} - U_{h,2}^{n},\\
		\widehat{W}_h^n = W_{h,1}^{n} - W_{h,2}^{n},
	\end{gather*}
	which solve
	\begin{gather}
		\label{impexpunique1}
		m_h(\widehat{U}_h^n, \phi_h^n) + \tau a_h(\widehat{W}_h^n , \phi_h^n) = 0,\\
		m_h(\widehat{W}_h^n, \phi_h^n) = \varepsilon a_h(\widehat{U}_h^n , \phi_h^n) + \frac{1}{\varepsilon} m_h(F_1'(U_{h,1}^n) - F_1'(U_{h,2}^{n}),\phi_h^n),	\label{impexpunique2}
	\end{gather}
	for all $\phi_h^n \in S_h^n$.
	Testing \eqref{impexpunique1} with $\widehat{W}_h^n$ and \eqref{impexpunique2} with $\widehat{U}_h^n$ and subtracting the resulting equations yields
	\[\tau \| \gradgh \widehat{W}_h^n \|_{L^2(\Gamma_h^n)}^2 + \varepsilon \| \gradgh \widehat{U}_h^n \|_{L^2(\Gamma_h^n)}^2 + \frac{1}{\varepsilon} m_h(F_1'(U_{h,1}^{n}) - F_1'(U_{h,2}^{n}),\widehat{U}_h^n)=0.\]
	As noted throughout, $F_1'$ is monotonic and hence
	\[m_h(F_1'(U_{h,1}^{n}) - F_1'(U_{h,2}^{n}),\widehat{U}_h^n) \geq 0.\]
	Thus we have
	\[\tau \| \gradgh \widehat{W}_h^n \|_{L^2(\Gamma_h^n)}^2 + \varepsilon \| \gradgh \widehat{U}_h^n \|_{L^2(\Gamma_h^n)}^2 \leq 0,\]
	and so $\gradgh \widehat{U}_h^n = \gradgh \widehat{W}_h^n = 0$.
	Observing that $\mval{\widehat{U}_h^n}{\Gamma_h^n} = 0$ we may use Poincar\'e's inequality to conclude $\widehat{U}_h^n = 0$, and $\widehat{W}_h^n = 0$ follows. \qed
\end{proof}

The existence of a solution to \eqref{impexp eqn1}, \eqref{impexp eqn2} follows similar arguments to Lemma \ref{fullyimp existence}.
We define a functional, $\widetilde{J_h^n} : D^n \rightarrow \mathbb{R}$, for
\[D^n := \left\{ z_h^n \in S_h^n \mid m_h(z_h^n,1) = m_h(U_h^{n-1},1) \right\}\]
given by
\[\widetilde{J_h^n} (z_h^n) := \frac{1}{\varepsilon} m_h \left( F_1(z_h^n), 1 \right) - \frac{\theta}{\varepsilon}m_h(U_{h,+}^{n-1}, z_h^n) + \frac{\varepsilon}{2} \|\gradgh z_h^n \|_{L^2(\Gamma_h)}^2 + \frac{1}{2 \tau} \normshn{z_h^n - U_{h,+}^{n-1}}{n}^2. \]
It then follows that if $U_h^n = {\arg \min}_{z_h^n \in D^n} \widetilde{J_h^n} (z_h^n)$ then $U_h^n$ solves
\begin{multline*}
\varepsilon a_h(U_h^n, \phi_h^n) + \frac{\theta}{2 \varepsilon}m_h(F_1'(U_h^n), \phi_h^n) - \frac{1}{\varepsilon} m_h(U_{h,+}^{n-1}, \phi_h^n) + \frac{1}{\tau} m_h(\invshn{n} (U_h^n - U_{h,+}^{n-1}), \phi_h^n)\\
- \lambda^n m_h(1, \phi_h^n) =0,
\end{multline*}
for all $\phi_h^n \in S_h^n$, where
$$ \lambda^n = \frac{1}{\varepsilon} \mval{F_1'(U_h^n)}{\Gamma_h^n} - \frac{\theta}{\varepsilon}\mval{U_h^n}{\Gamma_h^n}. $$
Then setting $W_h^n = \lambda^n - \frac{1}{\tau}\invshn{n} (U_h^n - U_{h,+}^{n-1})$, one observes that $(U_h^n, W_h^n)$ solves
\begin{gather*}
\frac{1}{\tau} m_h(U_h^n - U_{h,+}^{n-1}, \phi_h^n) + a_h(W_h^n,\phi_h^n) = 0,\\ 
m_h(W_h^n, \phi_h^n)  =\varepsilon a_{h}(U_h^n,\phi_h) + \frac{1}{\varepsilon} m_h(F_1'(U_h^n), \phi_h^n) - \frac{\theta}{\varepsilon} m_h(U_{h,+}^{n-1}, {\phi_h^n}),
\end{gather*}
for all $\phi_h^n \in S_h^n$.
Clearly this is the same as \eqref{impexp eqn1}, \eqref{impexp eqn2}, and the existence of such a $U_h^n$ follows as before.\\

The proof of (numerical) stability follows by similar arguments to Lemma \ref{implicitstability}, provided that $\tau < \frac{C\varepsilon}{\theta}$ where $C$ is the constant appearing in \eqref{timeperturb1}.
We notice that instead of \eqref{discstab1}, one obtains
\begin{multline*}
	U^n \cdot A^n \frac{(U^n - U^{n-1})}{\tau} + \mathcal{F}_1^n(U^n)\cdot \frac{(U^n - U^{n-1})}{\tau\varepsilon} + W^n \cdot A^n W^n\\
 = \frac{\theta}{\tau\varepsilon} U^{n-1} \cdot M^{n-1}(U^n-U^{n-1})- W^n \cdot \frac{(M^n - M^{n-1})}{\tau} U^{n-1},
\end{multline*}
where the only difference is the term $\frac{\theta}{\tau\varepsilon} U^{n-1} \cdot M^{n-1}(U^n-U^{n-1})$.
One then writes this as
\begin{multline*}
		\frac{\theta}{\tau\varepsilon} U^{n-1} \cdot M^{n-1}(U^n-U^{n-1}) = \frac{\theta}{2 \tau \varepsilon} \left( U^n \cdot M^n U^n - U^{n-1} \cdot M^{n-1} U^{n-1} \right)\\
		- \frac{\theta}{2 \tau \varepsilon}(U^n - U^{n-1}) \cdot M^{n-1} (U^n - U^{n-1})
		+ \frac{\theta}{2 \tau \varepsilon} \left[U^{n} \cdot M^{n-1} U^{n} - U^{n} \cdot M^{n} U^{n} \right],
\end{multline*}
where the first term corresponds to the non-coercive part of the potential, and the second term is non-positive and may be ignored.
The final term is problematic however, as we must control this using \eqref{timeperturb1}, which introduces a term $\frac{C\theta}{\varepsilon} \|U_h^n\|_{L^2(\Gamma_h^n)}^2$, which must be controlled by using the discrete Gr\"onwall inequality.
Following the proof of Lemma \ref{implicitstability} one finds that this happens by taking $\tau$ small enough so that this term can be ``absorbed into the left hand side''.
In particular this requires us to take a timestep size such that $\tau = \mathcal{O}(\frac{\varepsilon}{\theta})$.
While numerical stability introduces a timestep requirement for our implicit-explicit scheme, we observe that this requirement is significantly less restrictive than that of the fully implicit scheme, which requires $\tau = \mathcal{O}(\frac{\varepsilon^3}{\theta^2})$.
This reduced timestep requirement comes from the fact that this discretisation yields requires one to solve a system involving only monotone operators, as we have considered the concave term at the previous timestep.
We omit further details on this proof.\\

One may expect that modifying Proposition \ref{continuous dependence} will introduce some timestep restriction.
We consider this in the follows result, we show in the following result that there is continuous dependence on the initial data with no such timestep restriction.

\begin{proposition}
	\label{impexp continuous dependence}
	Let $U_{h,0}^1, U_{h,0}^2 \in S_h^0$ be such that $\mval{U_{h,0}^1}{\Gamma_h(0)} = \mval{U_{h,0}^2}{\Gamma_h(0)}$.
	Denoting the solution of \eqref{impexp eqn1},\eqref{impexp eqn2} with initial data $U_{h,0}^i$ by $(U_{h}^{n,i},W_{h}^{n,i})$ we have that for $1\leq N \leq N_T$,
	\begin{align}
		\label{impexp stability}
		\normshn{U_h^{N,1} -U_h^{N,2} }{N}^2 + \sum_{n=1}^N  \| \gradgh(U_h^{N,1} -U_h^{N,2}) \|_{L^2(\Gamma_h^n)}^2 \leq e^{C t_N}\normshn{U_{h,0}^1 - U_{h,0}^2}{0}^2.
	\end{align}
	
\end{proposition}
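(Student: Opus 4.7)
The plan is to mimic the proof of Proposition \ref{continuous dependence}, with the crucial gain coming from the fact that the concave term $-\frac{\theta}{\varepsilon}u$ is now evaluated at the previous time step. Setting $\widehat{U}_h^n = U_h^{n,1}-U_h^{n,2}$ and $\widehat{W}_h^n = W_h^{n,1}-W_h^{n,2}$, subtracting the two realizations of \eqref{impexp eqn1}, \eqref{impexp eqn2} gives, for all $\phi_h^n\in S_h^n$,
\begin{gather*}
	\tfrac{1}{\tau}\bigl(m_h(\widehat U_h^n,\phi_h^n)-m_h(\widehat U_h^{n-1},\underline{\phi_h^n})\bigr)+a_h(\widehat W_h^n,\phi_h^n)=0,\\
	m_h(\widehat W_h^n,\phi_h^n)=\varepsilon a_h(\widehat U_h^n,\phi_h^n)+\tfrac{1}{\varepsilon}m_h(F_1'(U_h^{n,1})-F_1'(U_h^{n,2}),\phi_h^n)-\tfrac{\theta}{\varepsilon}m_h(\widehat U_h^{n-1},\underline{\phi_h^n}).
\end{gather*}
Since $\mval{\widehat U_h^n}{\Gamma_h^n}=0$ by the assumption on the initial means and the mass conservation implicit in \eqref{impexp eqn1}, the discrete inverse Laplacian $\invshn{n}\widehat U_h^n$ is well-defined.

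I would first test the first identity with $\invshn{n}\widehat U_h^n$, giving $a_h(\widehat W_h^n,\invshn{n}\widehat U_h^n)=m_h(\widehat W_h^n,\widehat U_h^n)$, and then insert the second identity tested against $\widehat U_h^n$. Exactly as in the proof of Proposition \ref{continuous dependence}, the monotonicity of $F_1'$ lets us drop the nonlinear term, and the identity
\[
	\invshn{n}\widehat U_{h,+}^{n-1}=(\invshn{n-1}\widehat U_h^{n-1})_{++}
\]
combined with \eqref{timeproj5} converts the material term into the clean $\invshn{}$-telescoping
\[
	m_h(\widehat U_h^n,\invshn{n}\widehat U_h^n)-m_h(\widehat U_h^{n-1},\underline{\invshn{n}\widehat U_h^n})\geq \tfrac12\bigl(\normshn{\widehat U_h^n}{n}^2-\normshn{\widehat U_h^{n-1}}{n-1}^2\bigr)-C\tau\normshn{\widehat U_h^{n-1}}{n-1}^2.
\]
Collecting what remains yields
\[
	\tfrac12\bigl(\normshn{\widehat U_h^n}{n}^2-\normshn{\widehat U_h^{n-1}}{n-1}^2\bigr)+\varepsilon\tau\|\gradgh\widehat U_h^n\|_{L^2(\Gamma_h^n)}^2\leq \tfrac{\tau\theta}{\varepsilon}m_h(\widehat U_h^{n-1},\underline{\widehat U_h^n})+C\tau\normshn{\widehat U_h^{n-1}}{n-1}^2.
\]

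The main obstacle — and the whole point of the implicit-explicit splitting — is to control the mixed term $\frac{\tau\theta}{\varepsilon}m_h(\widehat U_h^{n-1},\underline{\widehat U_h^n})$ \emph{without} generating a factor of $\widehat U_h^n$ in a norm that must be absorbed back via the time step restriction. The trick is to rewrite it as
\[
	m_h(\widehat U_h^{n-1},\underline{\widehat U_h^n})=a_h\bigl(\invshn{n-1}\widehat U_h^{n-1},\underline{\widehat U_h^n}\bigr)\leq \normshn{\widehat U_h^{n-1}}{n-1}\,\|\gradgh \underline{\widehat U_h^n}\|_{L^2(\Gamma_h^{n-1})},
\]
invoke \eqref{timenorm2} to transfer the gradient back to $\Gamma_h^n$, and then apply Young's inequality to obtain
\[
	\tfrac{\tau\theta}{\varepsilon}m_h(\widehat U_h^{n-1},\underline{\widehat U_h^n})\leq \tfrac{\varepsilon\tau}{2}\|\gradgh\widehat U_h^n\|_{L^2(\Gamma_h^n)}^2+\tfrac{C\tau\theta^2}{\varepsilon^3}\normshn{\widehat U_h^{n-1}}{n-1}^2.
\]
The gradient piece is absorbed into the coercive term on the left, and the remaining $H^{-1}$-type term involves only $\widehat U_h^{n-1}$, at the previous time level.

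Finally I would sum from $n=1$ to $N$ and apply a standard discrete Gr\"onwall inequality to the resulting estimate
\[
	\normshn{\widehat U_h^N}{N}^2+\varepsilon\tau\sum_{n=1}^N\|\gradgh\widehat U_h^n\|_{L^2(\Gamma_h^n)}^2\leq \normshn{\widehat U_h^0}{0}^2+C\tau\sum_{n=0}^{N-1}\normshn{\widehat U_h^n}{n}^2,
\]
which gives \eqref{impexp stability} with the constant $C=C(\varepsilon,\theta)$ independent of $\tau$ — precisely because no absorption on the right-hand side ever required a smallness of $\tau$.
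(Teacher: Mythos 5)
Your proposal is correct and follows essentially the same route as the paper's proof: you test \eqref{impexp stability1} with $\invshn{n}\widehat{U}_h^n$, drop the nonlinear term by monotonicity of $F_1'$, use the identity $\invshn{n}\widehat{U}_{h,+}^{n-1}=(\invshn{n-1}\widehat{U}_h^{n-1})_{++}$ together with \eqref{timeproj5} to obtain the telescoping $\normshn{\cdot}{n}$-identity, and bound the mixed term $\frac{\tau\theta}{\varepsilon}m_h(\widehat{U}_h^{n-1},\underline{\widehat{U}_h^n})$ by rewriting it through $\invshn{n-1}$, applying \eqref{timenorm2}, and Young's inequality before closing with a discrete Gr\"onwall inequality. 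This is precisely the paper's argument, including the key observation that the resulting $\normshn{\cdot}{n-1}$-term sits at the previous time level and therefore requires no smallness of $\tau$ for absorption.
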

\begin{proof}
	As before, we define
	\begin{gather*}
		\widehat{U}_h^n = U_{h,1}^n - U_{h,2}^n,\\
		\widehat{W}_h^n = W_{h,1}^n - W_{h,2}^n,
	\end{gather*}
	and note that from \eqref{impexp eqn1}, \eqref{impexp eqn2}
	\begin{gather}
		\frac{1}{\tau} \left( m_h(\widehat{U}_h^n, \phi_h^n) - m_h(\widehat{U}_h^{n-1}, \underline{\phi_h^n}) \right) + a_h(\widehat{W}_h^n,\phi_h^n) = 0, \label{impexp stability1}\\
		m_h(\widehat{W}_h^n,\phi_h^n) = \varepsilon a_h(\widehat{U}_h^n, \phi_h^n) + \frac{1}{\varepsilon} m_h(F_1'(U_{h,1}^n) - F_1'(U_{h,2}^n), \phi_h^n) - \frac{\theta}{\varepsilon} m_h(\widehat{U}_h^{n-1}, \underline{\phi_h^n}), \label{impexp stability2}
	\end{gather}
	holds for all $\phi_h^n \in S_h^n$.\\
	
	We test \eqref{impexp stability1} with $\invshn{n} \widehat{U}_h^n$ for
	\[ m_h(\widehat{U}_h^n, \invshn{n} \widehat{U}_h^n) - m_h(\widehat{U}_h^{n-1}, \underline{\invshn{n} \widehat{U}_h^n}) + \tau a_h(\widehat{W}_h^n,\invshn{n} \widehat{U}_h^n) = 0, \]
	and observe
    \begin{multline*}
        a_h(\widehat{W}_h^n,\invshn{n} \widehat{U}_h^n) = m_h(\widehat{W}_h^n, \widehat{U}_h^n) = \varepsilon a_h(\widehat{U}_h^n, \widehat{U}_h^n) + \frac{1}{\varepsilon} m_h(F_1'(U_{h,1}^n) - F_1'(U_{h,2}^n), \widehat{U}_h^n)\\
        - \frac{\theta}{\varepsilon} m_h(\widehat{U}_h^{n-1}, \underline{\widehat{U}_h^n}), 
    \end{multline*}
	and one obtains
	\begin{multline}
		m_h(\widehat{U}_h^n, \invshn{n} \widehat{U}_h^n) - m_h(\widehat{U}_h^{n-1}, \underline{\invshn{n} \widehat{U}_h^n}) + \varepsilon \tau a_h(\widehat{U}_h^n, \widehat{U}_h^n) + \frac{\tau}{\varepsilon} m_h(F_1'(U_{h,1}^n) - F_1'(U_{h,2}^n), \widehat{U}_h^n)\\
		= \frac{\tau\theta}{\varepsilon} m_h(\widehat{U}_h^{n-1}, \underline{\widehat{U}_h^n}). \label{impexp stability3}
	\end{multline}
	Convexity of $F_1(\cdot)$ implies $m_h(F_1'(U_{h,1}^n) - F_1'(U_{h,2}^n), \widehat{U}_h^n) \geq 0$ as usual, and so we neglect this term.
	As in the proof of Proposition \ref{continuous dependence} we observe from \eqref{invlap difference1}, \eqref{invlap difference2} that
	\[ m_h(\widehat{U}_h^n, \invshn{n} \widehat{U}_h^n) - m_h(\widehat{U}_h^{n-1}, \underline{\invshn{n} \widehat{U}_h^n}) = \frac{1}{2} \left( \normshn{\widehat{U}_h^n}{n}^2 - \normshn{\widehat{U}_{h,+}^{n-1}}{n}^2 + \normshn{\widehat{U}_h^n - \widehat{U}_{h,+}^{n-1}}{n}^2 \right), \]
	and so \eqref{impexp stability3} becomes
	\begin{multline}
		\normshn{\widehat{U}_h^n}{n}^2 - \normshn{\widehat{U}_{h}^{n-1}}{n-1}^2+ \normshn{\widehat{U}_h^n - \widehat{U}_{h,+}^{n-1}}{n}^2 + 2\varepsilon\tau \|\gradgh \widehat{U}_h^n \|_{L^2(\Gamma_h^n)}^2\\
        \leq \frac{2\tau\theta}{\varepsilon} m_h(\widehat{U}_h^{n-1}, \underline{\widehat{U}_h^n}) + \left( \normshn{\widehat{U}_{h,+}^{n-1}}{n}^2- \normshn{\widehat{U}_{h}^{n-1}}{n-1}^2 \right). \label{impexp stability4}
	\end{multline}
	
	As shown in the proof of Proposition \ref{continuous dependence} we have that this final term is bounded by using
	\begin{align}
		\left|\normshn{\widehat{U}_{h,+}^{n-1}}{n}^2- \normshn{\widehat{U}_{h}^{n-1}}{n-1}^2\right| \leq C \tau \normshn{\widehat{U}_{h}^{n-1}}{n-1}^2. \label{impexp stability5}
	\end{align}
	All that remains is to bound $ m_h(\widehat{U}_h^{n-1}, \underline{\widehat{U}_h^n})$, for which we use \eqref{timenorm2}, the definition of $\invshn{n}$ and Young's inequality to see that
	\begin{align}
		\frac{2\tau\theta}{\varepsilon} m_h(\widehat{U}_h^{n-1}, \underline{\widehat{U}_h^n}) \leq \varepsilon \tau \|\gradgh \widehat{U}_h^n\|_{L^2(\Gamma_h^n)}^2 + \frac{C \tau \theta^2}{\varepsilon^3} \normshn{\widehat{U}_h^{n-1}}{n-1}
	\end{align}
	Now using \eqref{impexp stability4}, \eqref{impexp stability5} in \eqref{impexp stability3} and summing over $n$ one obtains
	\begin{multline*}
		\normshn{\widehat{U}_h^N}{N}^2 - \normshn{\widehat{U}_h^0}{0}^2 + \varepsilon \tau \sum_{n=1}^N \|\gradgh \widehat{U}_h^n\|_{L^2(\Gamma_h^n)}^2 + \sum_{n=1}^N \normshn{\widehat{U}_h^n - \widehat{U}_{h,+}^{n-1}}{n}^2\\
        \leq C \tau \sum_{n=0}^{N-1} \normshn{\widehat{U}_h^n}{n}^2,
	\end{multline*}
	where $C$ depends on $\varepsilon, \theta$.
	The result follows from a discrete Gr\"onwall inequality. \qed
\end{proof}

\subsection{Error analysis}
We use the same notation as in Section \ref{section:error}
\begin{align*}
	(u^n)^{-\ell} - U_h^n &= \underbrace{(u^n)^{-\ell} - \Pi_h u^n}_{=: \rho_u^n} + \underbrace{\Pi_h u^n - U_h^n}_{=: \sigma_u^n},\\
	(w^n)^{-\ell} - W_h^n &= \underbrace{(w^n)^{-\ell} - \Pi_h w^n}_{=: \rho_w^n} + \underbrace{\Pi_h w^n - U_h^n}_{=: \sigma_w^n},
\end{align*}
where $u^n = u(t_n), w^n = w(t_n),$ and we understand $(U_h^n, W_h^n)$ as the unique solution pair of \eqref{impexp eqn1}, \eqref{impexp eqn2}.
Note that \eqref{smoothassumptions1} implies that $\Pi_h u^n, \Pi_h w^n$ are defined for all $n$.
Recall that
\begin{multline*}
\max_{n=1,...,N_T} \|\rho_u^n\|_{L^2(\Gamma_h^n)}^2 + \tau \sum_{n=1}^{N_T}  \|\rho_w^n\|_{L^2(\Gamma_h^n)}^2\\
\leq Ch^{2(k+1)} \sup_{t \in [0,T]} \left(\|u\|_{H^{k+1}(\Gamma(t))} + \|w\|_{H^{k+1}(\Gamma(t))}^2\right),
\end{multline*}
and so our focus is in bounding $\sigma_u^n$ and $\sigma_w^n$ as before.\\

As in Section \ref{section:error} one finds that
\begin{align}
	\frac{1}{\tau} \left( m_h(\sigma_u^n, \phi_h^n) - m_h(\sigma_u^{n-1}, \underline{\phi_h^n}) \right) + a_h(\sigma_w^n , \phi_h^n) = E_1(\phi_h^n),
	\label{impexp erroreqn1}
\end{align}
where
\begin{align*}
	E_1(\phi_h^n) := \frac{1}{\tau} \left( m_h(\Pi_h u^n, \phi_h^n) - m_h(\Pi_h u^{n-1}, \underline{\phi_h^n}) \right) -m(\matdev u^n, (\phi_h^n)^\ell)- g(u^n, (\phi_h^n)^\ell),
\end{align*}
which is the same as \eqref{smootherroreqn1}.
The analogue of \eqref{smootherroreqn2} is now given by
\begin{multline}
	m_h(\sigma_w^n, \phi_h^n) - \varepsilon a_h(\sigma_u^n, \phi_h^n) - \frac{1}{\varepsilon}m_h(F_1'(\Pi_h u^n) - F_1'(U_h^n), \phi_h^n) + \frac{\theta}{\varepsilon} m_h(\sigma_u^{n-1}, \underline{\phi_h^n})\\
	= E_2(\phi_h^n) + E_3(\phi_h^n) + E_4(\phi_h^n) + E_5(\phi_h^n)
	\label{impexp erroreqn2}
\end{multline}
where
\begin{gather*}
	E_2(\phi_h^n) := m_h(\Pi_h w^n, \phi_h^n) - m(w^n, (\phi_h^n)^\ell),\\
	E_3(\phi_h^n) := \frac{1}{\varepsilon} \left[m(F_1'(u^n), (\phi_h^n)^\ell )  - m_h(F_1'(\Pi_h u^n), \phi_h^n)\right],\\
	E_4(\phi_h^n) := \frac{\theta}{\varepsilon} \left[ m_h(\Pi_h u^n, \phi_h^n) - m(u^n, (\phi_h^n)^\ell) \right],\\
	E_5(\phi_h^n) := \frac{\theta}{\varepsilon} \left[ m_h(\Pi_h u^{n-1}, \underline{\phi_h^n}) - m_h(\Pi_h u^n, \phi_h^n) \right].
\end{gather*}

Noting the consistency errors are defined only in terms of the true solution pair $(u,w)$, one finds that Lemma \ref{smootherrorlemma} still holds and so we have bounded $|E_i(\phi_h^n)|$ for $i=1,...,4$ but it remains to bound $|E_5(\phi_h^n)|$.
This is the content of the following lemma.

\begin{lemma}
	\label{impexp errorlemma}
	For $E_5$ defined as above, one has \begin{align}
		|E_5(\phi_h^n)| \leq C \sqrt{\tau} \left(\int_{t_{n-1}}^{t_n} \|u\|_{H^2(\Gamma(t))}^2 + \|\matdev u\|_{H^2(\Gamma(t))}^2 \, dt \right)^\frac{1}{2} \|{\phi_h^n}\|_{L^2(\Gamma_h^n)}.
	\end{align}
\end{lemma}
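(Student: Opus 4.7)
The plan is to rewrite $E_5$ in integral form over the interval $[t_{n-1},t_n]$ via the fundamental theorem of calculus, apply the discrete transport theorem to differentiate, and then use the available norm equivalences and Ritz-projection bounds to bound the integrand.

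More concretely, I would first observe that since $\underline{\phi_h^n}(t_{n-1}) = \underline{\phi_h^n}$ and $\underline{\phi_h^n}(t_n) = \phi_h^n$, we may write
\[
E_5(\phi_h^n) = \frac{\theta}{\varepsilon}\int_{t_{n-1}}^{t_n} \frac{d}{dt} \left[ -m_h(\Pi_h u(t), \underline{\phi_h^n}(t)) \right] dt.
\]
Applying the discrete transport theorem (Proposition \ref{transport3}) and using the transport property of the nodal basis functions, which gives $\matdev_h \underline{\phi_h^n}(t) = 0$, one obtains
\[
E_5(\phi_h^n) = -\frac{\theta}{\varepsilon}\int_{t_{n-1}}^{t_n} \left( m_h(\matdev_h \Pi_h u(t), \underline{\phi_h^n}(t)) + g_h(\Pi_h u(t), \underline{\phi_h^n}(t)) \right) dt.
\]

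Next I would estimate the integrand pointwise in $t$. Cauchy--Schwarz together with \eqref{timenorm1} gives $\|\underline{\phi_h^n}(t)\|_{L^2(\Gamma_h(t))} \leq C\|\phi_h^n\|_{L^2(\Gamma_h^n)}$; the smoothness of $V_h$ bounds the $g_h$ contribution by a constant times $\|\Pi_h u(t)\|_{L^2(\Gamma_h(t))}$, which in turn is controlled by $C\|u(t)\|_{H^1(\Gamma(t))}$ using the stability of the lift \eqref{lift1} and the $H^1$-stability of the Ritz projection \eqref{ritz1}. For the material-derivative term, the bound \eqref{ritzddtnorm} yields
\[
\|\matdev_h \Pi_h u(t)\|_{L^2(\Gamma_h(t))} \leq C\bigl( \|u(t)\|_{H^2(\Gamma(t))} + \|\matdev u(t)\|_{H^2(\Gamma(t))} \bigr).
\]
Combining these gives
\[
|E_5(\phi_h^n)| \leq C \|\phi_h^n\|_{L^2(\Gamma_h^n)} \int_{t_{n-1}}^{t_n} \bigl( \|u(t)\|_{H^2(\Gamma(t))} + \|\matdev u(t)\|_{H^2(\Gamma(t))} \bigr) dt.
\]

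Finally, a Cauchy--Schwarz inequality in time produces the $\sqrt{\tau}$ factor, yielding the claimed bound. There is no essential obstacle here; the only subtlety is recognising that, because $\underline{\phi_h^n}(t)$ is built from the transport-invariant basis functions, its discrete material derivative vanishes and only the $\matdev_h \Pi_h u$ and $g_h$ terms survive in Proposition \ref{transport3}. The form of the bound, involving $\|u\|_{H^2}$ and $\|\matdev u\|_{H^2}$ rather than $H^{k+1}$-norms, reflects that no interpolation error is being extracted in this estimate --- only the time-consistency error induced by evaluating $\Pi_h u$ at $t_{n-1}$ rather than $t_n$.
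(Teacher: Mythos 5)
Your proof is correct and follows essentially the same route as the paper: write $E_5$ as the time integral of $\frac{d}{dt}\, m_h(\Pi_h u(t), \underline{\phi_h^n}(t))$, apply the discrete transport theorem (Proposition \ref{transport3}) together with the vanishing material derivative of the basis functions, bound $\|\matdev_h \Pi_h u\|$ and the $g_h$ contribution via the Ritz-projection estimates, lift stability, and \eqref{timenorm1}, and finish with Cauchy--Schwarz in time to extract the $\sqrt{\tau}$ factor. The only minor deviation is that you invoke \eqref{ritzddtnorm} directly, which is arguably the more natural tool here than the paper's citation of \eqref{ritzddt}; the resulting bound is the same.
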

\begin{proof}
	Notice that
	\begin{align*}
		E_5(\phi_h^n) &= \int_{t_{n-1}}^{t_n}\frac{d}{dt} m_h(\Pi_h u(t), \underline{\phi_h^n}(t)) \, dt\\
  &= \int_{t_{n-1}}^{t_n} m_h(\matdev_h \Pi_h u(t), \underline{\phi_h^n}(t)) + g_h(\Pi_h u, \underline{\phi_h^n}(t)) \, dt.
	\end{align*}
	We use \eqref{ritz1}, \eqref{ritzddt}, and the bounds on $V_h$ to see that
	\begin{align*}
		|E_5(\phi_h^n)| \leq C \int_{t_{n-1}}^{t_n}  \left( \|u\|_{H^2(\Gamma(t))} + \|\matdev u\|_{H^2(\Gamma(t))} \right) \|\underline{\phi_h^n}(t)\|_{L^2(\Gamma_h(t))} \, dt.
	\end{align*}
	The result then follows from \eqref{timenorm1}, and an application of H\"older's inequality. \qed
\end{proof}

With this we can adapt Theorem \ref{smootherrortheorem} as follows.

\begin{theorem}
	\label{impexp errortheorem}
	Given initial data $u_0 \in H^{k+1}(\Gamma(0))$, $U_{h,0} = \Pi_h u_0$, and assuming \eqref{smoothassumptions1}, \eqref{smoothassumptions2} hold then for sufficiently small $\tau,h$ such that
    \[\tau \left( \varepsilon^2 + \frac{C \theta}{\varepsilon} + \frac{L_F^2 + C\theta^2}{\varepsilon^2} \right) \leq \frac{\varepsilon}{4},\]
    we have the following:
	\begin{gather*}
		\begin{aligned}
    \varepsilon \max_{n=1,...,N_T}\|(u^n)^{-\ell} - U_h^n\|_{L^2(\Gamma_h^n)}^2 + \varepsilon h^2 \max_{n=1,...,N_T} \|\gradgh ((u^n)^{-\ell} - U_h^n)\|_{L^2(\Gamma_h^n)}^2\\
    \leq C_h h^{2({k+1})} + C_\tau \tau^2,
    \end{aligned}\\
	\begin{aligned}
    \tau\sum_{n=1}^{N_T} \|(w^n)^{-\ell} - W_h^n\|_{L^2(\Gamma_h^n)}^2 + \tau h^2 \sum_{n=1}^{N_T}\|\gradgh ((w^n)^{-\ell} - W_h^n)\|_{L^2(\Gamma_h^n)}^2\\
    \leq C_h h^{2({k+1})} + C_\tau \tau^2,
    \end{aligned}
	\end{gather*}
	where $(U_h^n,W_h^n)$ is the solution pair from the implicit-explicit scheme \eqref{impexp eqn1}, \eqref{impexp eqn2}.
	\begin{gather*}
		C_{\tau} = C \int_0^T \|\matdev u\|_{L^2(\Gamma(t))}^2 + \|(\matdev)^2 u\|_{L^2(\Gamma(t))}^2 \, dt,\\
		C_h = C \int_0^{T} \|\matdev u \|_{H^{k+1}(\Gamma(t))}^2 \,dt + C \sup_{t \in [0,T]} \left(\|w\|_{H^{k+1}(\Gamma(t))}^2 + \frac{\theta^2 \|u\|_{H^{k+1}(\Gamma(t))}^2}{\varepsilon^2} \right) + \frac{CL_F^2}{\varepsilon^2}.
	\end{gather*}
\end{theorem}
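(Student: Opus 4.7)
The plan is to mirror the proof of Theorem \ref{smootherrortheorem} using equations \eqref{impexp erroreqn1} and \eqref{impexp erroreqn2}, with two modifications to handle the explicit treatment of the concave part of the potential. First I would test \eqref{impexp erroreqn1} with $\varepsilon \sigma_u^n$ and \eqref{impexp erroreqn2} with $\sigma_w^n$, then add the two resulting identities so that the $\varepsilon a_h(\sigma_w^n,\sigma_u^n)$ terms cancel. As in \eqref{discstab2}, I would rewrite
\[
\frac{1}{\tau}\left(m_h(\sigma_u^n,\sigma_u^n)-m_h(\sigma_u^{n-1},\underline{\sigma_u^n})\right)
\]
as half the discrete time derivative of $\|\sigma_u^n\|_{L^2(\Gamma_h^n)}^2$ plus a nonnegative quadratic term in $\matdevtau \sigma_u^n$, with a remainder controlled via \eqref{timeperturb1}. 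The only new feature compared with the fully implicit case is the explicit term $\tfrac{\theta}{\varepsilon} m_h(\sigma_u^{n-1},\underline{\sigma_w^n})$, which by \eqref{timenorm1} and Young's inequality is bounded by $\tfrac{C\theta^2}{\varepsilon^2}\|\sigma_u^{n-1}\|_{L^2(\Gamma_h^{n-1})}^2 + \tfrac{1}{8}\|\sigma_w^n\|_{L^2(\Gamma_h^n)}^2$, and the first piece is a lagged $L^2$ term which is harmless for a discrete Grönwall argument.

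Next I would incorporate the consistency bounds. The estimates $|E_i(\phi_h^n)|$ for $i=1,\ldots,4$ come directly from Lemma \ref{smootherrorlemma}, noting that the Lipschitz constant $L_F$ now refers to the modified potential $F_M$ (i.e.\ $F_1''(M)$), which is legitimate by the maximum-principle argument already used in the fully implicit case. The new term $|E_5(\phi_h^n)|$ is bounded using Lemma \ref{impexp errorlemma} and contributes to $C_\tau$ through $\|u\|_{H^2}^2+\|\matdev u\|_{H^2}^2$, which is already absorbed in the stated constants. Squaring each $E_i$ term and applying Young's inequality with suitably chosen weights puts the bound into the form
\[
\frac{\varepsilon}{2\tau}\left(\|\sigma_u^n\|^2 - \|\sigma_u^{n-1}\|^2\right) + \frac{\varepsilon\tau}{2}\|\matdevtau \sigma_u^n\|^2 + \frac{1}{2}\|\sigma_w^n\|^2 \leq C\|\sigma_u^{n-1}\|^2 + C\|\sigma_u^n\|^2 + C_h h^{2(k+1)} + C_\tau \tau,
\]
where the norms on the left are on $\Gamma_h^n$ and $\Gamma_h^{n-1}$ and the $\|\sigma_u^n\|^2$ term on the right comes from \eqref{timeperturb1} and the cross estimate above.

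To close the Grönwall argument I must control $\|\sigma_u^n\|_{L^2}$ on the right hand side by $\|\gradgh \sigma_u^n\|_{L^2}$ or absorb it. For this I test \eqref{impexp erroreqn2} with $\sigma_u^n$ and use monotonicity of $F_1'$ (applied to the modified $F_M$), Young's inequality, and the $L^2$ bound on $\sigma_u^{n-1}$ already appearing in the induction, to obtain
\[
\varepsilon\|\gradgh \sigma_u^n\|_{L^2(\Gamma_h^n)}^2 \leq \tfrac{1}{4}\|\sigma_w^n\|_{L^2(\Gamma_h^n)}^2 + \tfrac{C\theta^2}{\varepsilon}\|\sigma_u^{n-1}\|_{L^2(\Gamma_h^{n-1})}^2 + \sum_{i=2}^{5}(\hat E_i^n)^2,
\]
and then combine with Poincaré's inequality (noting $m_h(\sigma_u^n,1)=0$ by conservation of mass and the choice $U_{h,0}=\Pi_h u_0$) to convert $\|\sigma_u^n\|_{L^2}^2$ into $\|\gradgh \sigma_u^n\|_{L^2}^2$. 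Summing the combined inequality over $n=1,\ldots,N_T$ and applying a standard discrete Grönwall inequality (valid for $\tau$ small enough so the prefactor of $\|\sigma_u^n\|^2$ on the right can be absorbed into the discrete derivative on the left) yields the stated $L^2$ bounds. The $H^1$ bound then follows by an inverse inequality applied to $\sigma_u^n,\sigma_w^n$ combined with \eqref{ritz3} for $\rho_u^n,\rho_w^n$.

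The main obstacle is the treatment of the explicit $\sigma_u^{n-1}$ term in \eqref{impexp erroreqn2}: unlike the fully implicit case, the cross coupling between $\sigma_u$ and $\sigma_w$ picks up $\sigma_u$ at two different time levels, so one must be careful to distribute the Poincaré/Young steps so that the $\|\sigma_u^n\|_{L^2}^2$ contributions arising from both \eqref{timeperturb1} and the new cross term are handled simultaneously without introducing a spurious timestep condition of the form $\tau \lesssim \varepsilon^3/\theta^2$; as in the stability analysis for this scheme the only requirement should be $\tau = \mathcal{O}(\varepsilon/\theta)$, which is compatible with the mild timestep condition needed for the $L^\infty$ bound on $U_h^n$ via the modified potential argument.
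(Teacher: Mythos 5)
Your overall strategy is the right one and matches the paper's: test \eqref{impexp erroreqn1} with $\varepsilon\sigma_u^n$ and \eqref{impexp erroreqn2} with $\sigma_w^n$, handle the new explicit cross term $\tfrac{\theta}{\varepsilon}m_h(\sigma_u^{n-1},\underline{\sigma_w^n})$ by \eqref{timenorm1} and Young's inequality, use the $E_1,\ldots,E_4$ bounds from Lemma \ref{smootherrorlemma} plus the new $E_5$ bound from Lemma \ref{impexp errorlemma}, recover the $\varepsilon\|\gradgh\sigma_u^n\|^2$ term by testing \eqref{impexp erroreqn2} with $\sigma_u^n$, and close with discrete Gr\"onwall and an inverse inequality. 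That is precisely what the paper does (it simply states the modifications are "obvious" and omits the calculations).

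However, your final paragraph contains a genuine error. You assert that the only time-step requirement for this error bound should be $\tau = \mathcal{O}(\varepsilon/\theta)$, and that the stronger restriction arising in the fully implicit analysis is "spurious" here. This is not correct for the proof you are sketching. Because you still test \eqref{impexp erroreqn2} with $\sigma_w^n$, you must control $\tfrac{1}{\varepsilon}m_h(F_1'(\Pi_h u^n)-F_1'(U_h^n),\sigma_w^n)$ via the Lipschitz bound $\tfrac{L_F}{\varepsilon}\|\sigma_u^n\|\|\sigma_w^n\|$, and Young's inequality then produces a $\tfrac{L_F^2}{\varepsilon^2}\|\sigma_u^n\|_{L^2}^2$ term on the right-hand side at time level $n$ — not $n-1$ — which must be absorbed into the discrete time derivative. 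This forces the same restriction as in Theorem \ref{smootherrortheorem},
\[
\tau\left(\varepsilon^2 + \frac{C\theta}{\varepsilon} + \frac{L_F^2 + C\theta^2}{\varepsilon^2}\right) \leq \frac{\varepsilon}{4},
\]
and the paper says this explicitly. The less restrictive $\tau=\mathcal{O}(\varepsilon/\theta)$ is what suffices for the Ginzburg--Landau stability estimate, not for the $L^2$ error bound, and the condition $\tau \lesssim \varepsilon^3/\theta^2$ you call spurious was in any case a \emph{uniqueness} condition for the fully implicit scheme, a separate matter. The only way to avoid the $L_F$-dependence in the time-step restriction is to prove a weaker $L^2_{H^1}$ bound by testing with $\invshn{n}\sigma_u^n$ instead (so that monotonicity of $F_1'$ can be used directly rather than Lipschitz continuity); the paper sketches exactly this alternative \emph{after} the theorem, but it is not the proof of the theorem as stated.

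One further minor point: your invocation of the modified potential $F_M$ and the $L^\infty$ bound on $U_h^n$ is slightly out of place. The paper proves Theorem \ref{impexp errortheorem} first under the assumption that $F_1'$ is globally Lipschitz, and only then remarks that the non-Lipschitz case follows by the same truncation argument as before; the truncation is not needed inside this proof.
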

\begin{proof}
	This proof is largely the same as that of Theorem \ref{smootherrortheorem}, with obvious modifications in line with the preceding lemma, and so we omit the calculations.
	We note that here we assume $F_1'$ is Lipschitz continuous, and the extension to a non-Lipschitz $F_1'$ holds before.
	As in seen in Theorem \ref{smootherrortheorem}, we take $\tau$ sufficiently small so that
	\[\tau \left( \varepsilon^2 + \frac{C \theta}{\varepsilon} + \frac{L_F^2 + C\theta^2}{\varepsilon^2} \right) \leq \frac{\varepsilon}{4},\]
	and so we still require the same timestep restriction to obtain an error bound. \qed
\end{proof}

While this proof gives an optimal order of convergence, it has the same timestep restriction as in Theorem \ref{smootherrortheorem}, which almost defeats the point of considering the implicit-explicit scheme.
One can possibly avoid this by proving a weaker error bound, similar to the arguments of \cite{barrett1995error}, which we outline here.\\

Firstly, testing \eqref{impexp erroreqn1} with $\invshn{n}\sigma_u^n$ for
\begin{multline*}
	\frac{1}{\tau} \left( a_h(\invshn{n}\sigma_u^n, \invshn{n}\sigma_u^n) -a_h(\invshn{n-1}\sigma_u^{n-1}, \invshn{n-1}\sigma_u^{n-1}) \right) + a_h(\sigma_w^n , \invshn{n}\sigma_u^n)\\
	+ \frac{1}{\tau} a_h(\invshn{n-1}\sigma_u^{n-1},\underline{\invshn{n}\sigma_u^n}- \invshn{n-1}\sigma_u^{n-1} ) = E_1(\invshn{n}\sigma_u^n),
\end{multline*}
where we have used the definition of $\invshn{n}$.
Likewise from \eqref{impexp erroreqn2} we find that
\begin{multline*}
	a_h(\sigma_w^n , \invshn{n}\sigma_u^n) = \varepsilon a_h(\sigma_u^n, \sigma_u^n) + \frac{1}{\varepsilon}m_h(F_1'(\Pi_h u^n) - F_1'(U_h^n), \sigma_u^n) - \frac{\theta}{\varepsilon} m_h(\sigma_u^{n-1}, \underline{\sigma_u^n})\\
 + \sum_{i=2}^5 E_i(\sigma_u^n).
\end{multline*}
Combining these one finds that
\begin{multline*}
	\frac{1}{\tau} \left( \normshn{\sigma_u^n}{n}^2 - \normshn{\sigma_u^{n-1}}{n-1}^2\right) + \varepsilon \|\gradgh\sigma_u^n\|_{L^2(\Gamma_h^n)}^2 + \frac{1}{\varepsilon}m_h(F_1'(\Pi_h u^n) - F_1'(U_h^n), \sigma_u^n)\\
 = \frac{\theta}{\varepsilon} m_h(\sigma_u^{n-1}, \underline{\sigma_u^n})
+ \frac{1}{\tau} a_h(\invshn{n-1}\sigma_u^{n-1},\underline{\invshn{n}\sigma_u^n}- \invshn{n-1}\sigma_u^{n-1} ) + E_1(\invshn{n}\sigma_u^n)-\sum_{i=2}^5 E_i(\sigma_u^n).
\end{multline*}
The simplification now comes from using the monotonicity of $F_1'$ so that $m_h(F_1'(\Pi_h u^n) - F_1'(U_h^n), \sigma_u^n) \geq 0$, and one should be able to remove the $L_F$ term on the timestep size restriction.
Similarly the non-coercive term now involves both $\sigma_u^n$ and $\sigma_u^{n-1}$, which means we can obtain a less strict timestep size restriction.
We do not continue this argument, which is similar to the proof of Proposition \ref{impexp continuous dependence}, but we note that the end result should be an optimal order $L^2_{H^1}$ error bound for $U_h^n$.

\section{Numerical experiments}
\label{section:implementation}
In this section we implement both the fully implicit scheme and the implicit-explicit scheme for linear finite elements using the DUNE library \cite{alkamper2014dune}.
We can express \eqref{fdiscfecheqn1}, \eqref{fdiscfecheqn2} in a block matrix form as
$$ \begin{pmatrix}
	M^n & \tau A^n\\
	-\varepsilon A^n + \frac{\theta}{\varepsilon}M^n  & M^n
\end{pmatrix}
\begin{pmatrix}
	\alpha^n\\
	\beta^n
\end{pmatrix}
- \frac{1}{\varepsilon} \begin{pmatrix}
	0\\
	\mathcal{F}_1^n(\alpha^n)
\end{pmatrix} =
\begin{pmatrix}
	M^{n-1} \alpha^{n-1}\\
	0
\end{pmatrix}
$$
where
$$ U_h^n = \sum_{j=1}^{N_h} \alpha_j^n \phi_j^n, \quad W_h^n = \sum_{j=1}^{N_h} \beta_j^n \phi_j^n,$$
\begin{gather*}
	M_{ij} = m_h(t_n;\phi_i^n,\phi_j^n),\\
	A_{ij} = a_{h}(t_n;\phi_i^n,\phi_j^n),\\
	\mathcal{F}_1^n(\alpha^n)_j = m_h \left(t_n;F_1' \left( \sum_{i=1}^{N_h} \alpha_i^n \phi_i^n \right), \phi_j^n\right),
\end{gather*}
and $\phi_i^n$ denotes the `$i$'th basis function at time $t_n$.
Similarly the block matrix form of \eqref{impexp eqn1}, \eqref{impexp eqn2} is
\[\begin{pmatrix}
	M^n & \tau A^n\\
	-\varepsilon A^n  & M^n
\end{pmatrix}
\begin{pmatrix}
	\alpha^n\\
	\beta^n
\end{pmatrix}
- \frac{1}{\varepsilon} \begin{pmatrix}
	0\\
	\mathcal{F}_1^n(\alpha^n)
\end{pmatrix} =
\begin{pmatrix}
	M^{n-1} \alpha^{n-1}\\
	-\frac{\theta}{\varepsilon}M^{n-1}\alpha^{n-1}
\end{pmatrix}
\]
Here we consider the potential given by
$$ F(z) = \frac{(1-z^2)^2}{4} = \frac{1 + z^4}{4} - \frac{z^2}{2},$$
which fits the framework we have considered.\\

All quadrature rules used are of sufficiently high order that the effects of numerical integration may be ignored (as in our preceding theory).The nonlinear system is solved using Newton's method and with a tolerance of $10^{-11}$, and the corresponding linearisation is solved directly. 

\subsection{Experimental order of convergence on an evolving sphere}

Here we consider an oscillating sphere given by the zero level set of the function
$$ \phi(x,y,z,t) = x^2 +y^2 + z^2 - 0.9 - 0.1 \cos(20 \pi t),$$
over a time interval $t \in [0,0.1]$.
We choose the initial data to be
$$ u_0(x,y,z) = 0.5 x \sin(\pi y),$$
and an interface width $\varepsilon = 0.1$.
The experimental order of convergence (EOC) is computed by solving the equation on a coarse mesh, and prolonging this coarse solution onto a finer mesh.
This process is analogous to lifting.
We obtain an error by approximating the true solution with a fine solution ($\tau = 10^{-5}, h \approx 3.827328 \cdot 10^{-2}$) of the fully implicit scheme, as the exact solution is not known, and we choose the timestep size to be $\mathcal{O}(h^2)$ to avoid a bottleneck in the error.
We see that the EOC is essentially of the order predicted by Theorem \ref{smootherrortheorem} and Theorem \ref{impexp errortheorem} for linear finite elements --- see Tables \ref{table:SmoothUTable}, \ref{table:SmoothWTable}, \ref{table:ImpExpUTable}, \ref{table:ImpExpWTable}.
We observe that the error in the fully implicit scheme is smaller  than the implicit-explicit scheme (albeit by a negligible about for $u$). The benefit of the implicit-explicit scheme, as shown in our preceding theory, is that it is stable for a smaller timestep size $\tau$.
Lastly, the example here is chosen so that we can compute a solution for a timestep size, $\tau$, which is large in comparison to the interface width, $\varepsilon$.
In practice this is not usually numerically stable and one must choose $\tau$ in accordance with the timestep size requirements for the appropriate numerical scheme.
In particular we observed stability issues when dealing with a large timestep size if the surface, $\Gamma(t)$, was shrinking sufficiently quickly --- we have thus not included these experiments.

\begin{table}[ht]
	\centering
	\begin{tabular}{ |c|c|c| } 
		\hline
		$h$ & $\|u(T) - (U_h^{N_T})^\ell\|_{L^2(\Gamma(T))}$ & EOC\\
		\hline
		$6.123724 \cdot 10^{-1}$ & $6.837856 \cdot 10^{-1}$ & - \\ 
		\hline
		$3.061862\cdot 10^{-1}$ & $2.181480 \cdot 10^{-1}$ & $1.648237$ \\ 
		\hline
		$1.530931\cdot 10^{-1}$ & $5.132094 \cdot 10^{-2}$ & $2.087688$ \\ 
		\hline
		$7.654655 \cdot 10^{-2}$ & $1.150091 \cdot 10^{-2}$ & $2.157799$ \\ 
		\hline
	\end{tabular}
	\caption{Table of EOC for $u$ on an evolving sphere for the fully implicit scheme \eqref{fdiscfecheqn1}, \eqref{fdiscfecheqn2}.}
 \label{table:SmoothUTable}
\end{table}

\begin{table}[ht]
	\centering
	\begin{tabular}{ |c|c|c| } 
		\hline
		$h$ & $\|w(T) - (W_h^{N_T})^\ell\|_{L^2(\Gamma(T))}$ & EOC\\
		\hline
		$6.123724 \cdot 10^{-1}$ & $4.020470 \cdot 10^{-1}$ & - \\ 
		\hline
		$3.061862\cdot 10^{-1}$ & $1.272730 \cdot 10^{-1}$ & $1.659437$ \\ 
		\hline
		$1.530931\cdot 10^{-1}$ & $2.823982 \cdot 10^{-2}$ & $2.172124$ \\ 
		\hline
		$7.654655 \cdot 10^{-2}$ & $5.847620 \cdot 10^{-3}$ & $2.271810$ \\ 
		\hline
	\end{tabular}
	\caption{Table of EOC for $w$ on an evolving sphere for the fully implicit scheme \eqref{fdiscfecheqn1}, \eqref{fdiscfecheqn2}.}
 \label{table:SmoothWTable}
\end{table}

\begin{table}[ht]
	\centering
	\begin{tabular}{ |c|c|c| } 
		\hline
		$h$ & $\|u(T) - (U_h^{N_T})^\ell\|_{L^2(\Gamma(T))}$ & EOC\\
		\hline
		$6.123724 \cdot 10^{-1}$ & $7.067299 \cdot 10^{-1}$ & - \\ 
		\hline
		$3.061862\cdot 10^{-1}$ & $2.252196 \cdot 10^{-1}$ & $1.6498267$ \\ 
		\hline
		$1.530931\cdot 10^{-1}$ & $5.307493 \cdot 10^{-2}$ & $2.085230$ \\ 
		\hline
		$7.654655 \cdot 10^{-2}$ & $1.193632 \cdot 10^{-2}$ & $2.156721$ \\ 
		\hline
	\end{tabular}
	\caption{Table of EOC for $u$ on an evolving sphere for the implicit-explicit scheme \eqref{impexp eqn1}, \eqref{impexp eqn2}.}
 \label{table:ImpExpUTable}
\end{table}

\begin{table}[ht]
	\centering
	\begin{tabular}{ |c|c|c| } 
		\hline
		$h$ & $\|w(T) - (W_h^{N_T})^\ell\|_{L^2(\Gamma(T))}$ & EOC\\
		\hline
		$6.123724 \cdot 10^{-1}$ & $2.369274 \cdot 10^{-1}$ & - \\ 
		\hline
		$3.061862\cdot 10^{-1}$ & $7.589871 \cdot 10^{-2}$ & $1.642298$ \\ 
		\hline
		$1.530931\cdot 10^{-1}$ & $2.436149 \cdot 10^{-2}$ & $1.639473$ \\ 
		\hline
		$7.654655 \cdot 10^{-2}$ & $6.847620 \cdot 10^{-3}$ & $1.830809$ \\ 
		\hline
	\end{tabular}
	\caption{Table of EOC for $w$ on an evolving sphere for the implicit-explicit scheme \eqref{impexp eqn1}, \eqref{impexp eqn2}.}
 \label{table:ImpExpWTable}
\end{table}

\subsection{Dynamics on an evolving torus}
\subsubsection{Constant surface area torus}
Here we consider a torus given by the zero level set of the function
$$ \phi(x,y,z,t) = \left( \sqrt{x^2 + y^2} - 0.75\left(1 + \frac{4t}{3}\right) \right)^2 + z^2 - \left(\frac{0.25}{1 + \frac{4t}{3}}\right)^2,$$
over a time interval $t \in [0,1]$.
This surface is chosen as it has a constant surface area, $|\Gamma(t)| = \frac{3 \pi^2}{4}$, but is nonstationary.
As seen in Figure \ref{fig:ConstantTorusGL}, the Ginzburg-Landau functional is non-monotonic in time (unlike the case when $\Gamma(t)$ is a stationary domain).\\

Here we have chosen the initial data to be
$$ u_0(x,y,z) = 0.5 x y \sin(10 \pi z),$$
and set $\varepsilon = 0.05$.
Here we only consider the fully implicit scheme \eqref{fdiscfecheqn1}, \eqref{fdiscfecheqn2} on a mesh consisting of $6016$ elements ($h \approx 0.136755$), and a timestep size $\tau = 5\cdot 10^{-5}$.

\begin{figure}[ht]
	\centering
	\includegraphics[width=\linewidth]{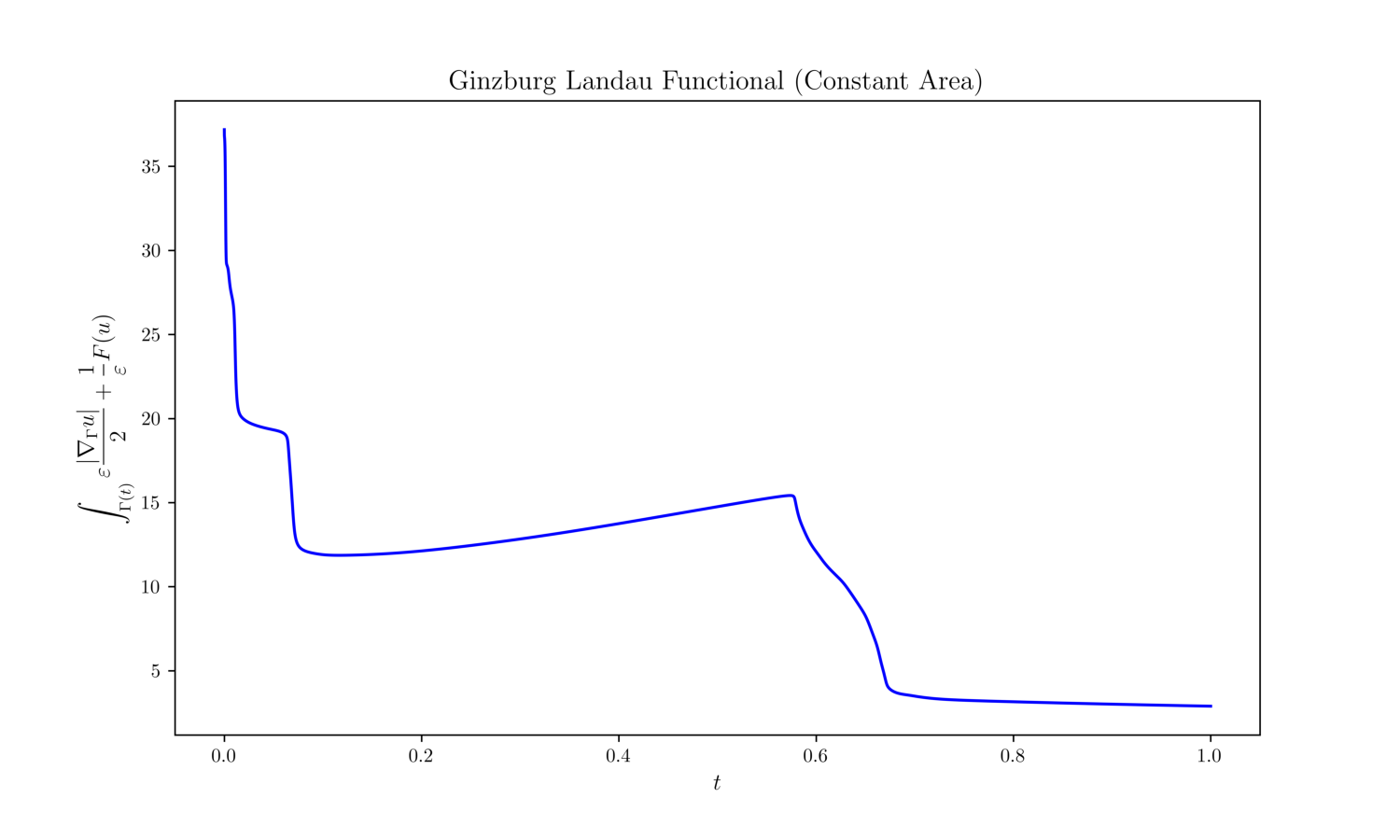}
	\caption{Plot of the Ginzburg-Landau functional (for the fully implicit scheme) on an evolving torus with constant surface area over $ t\in [0,1]$.}
	\label{fig:ConstantTorusGL}
\end{figure}

\subsubsection{Periodic torus}
Here we consider a torus given by the zero level set of the function
$$ \phi(x,y,z,t) = \left( \sqrt{x^2 + y^2} - 0.75 \right)^2 + z^2 - (0.25+0.1 \sin(20 \pi t))^2,$$
over a time interval $t \in [0,1]$.
We choose the initial data to be
$$ u_0(x,y,z) = 0.5 x y \sin(10 \pi z),$$
and set $\varepsilon = 0.05$.
Here we only consider the fully implicit scheme \eqref{fdiscfecheqn1}, \eqref{fdiscfecheqn2} on a mesh consisting of $6016$ elements ($h \approx 0.136755$), and a timestep size $\tau = 5\cdot 10^{-5}$.\\

Plots of the evolution (Figure \ref{fig:SmoothTorusEvolution}), and the Ginzburg-Landau functional (Figure \ref{fig:SmoothTorusGL}) are seen below, demonstrating the dynamics of the solution.
Again, the Ginzburg-Landau functional does not decrease monotonically, as one finds for a stationary domain.  but instead seems to converge to a periodic function.
This matches observations in  \cite{beschle2022stability,elliott2015evolving} and also 
\cite{elliott2015error} for a second order linear PDE where convergence to a periodic solution is also seen for a periodic domain.
This phenomena remains to be studied analytically for the Cahn-Hilliard equation, however there has been some work (see \cite{elliott2015time}) in this direction for a linear advection-diffusion equation.

\begin{figure}[ht]
	\centering
	\includegraphics[width=\linewidth]{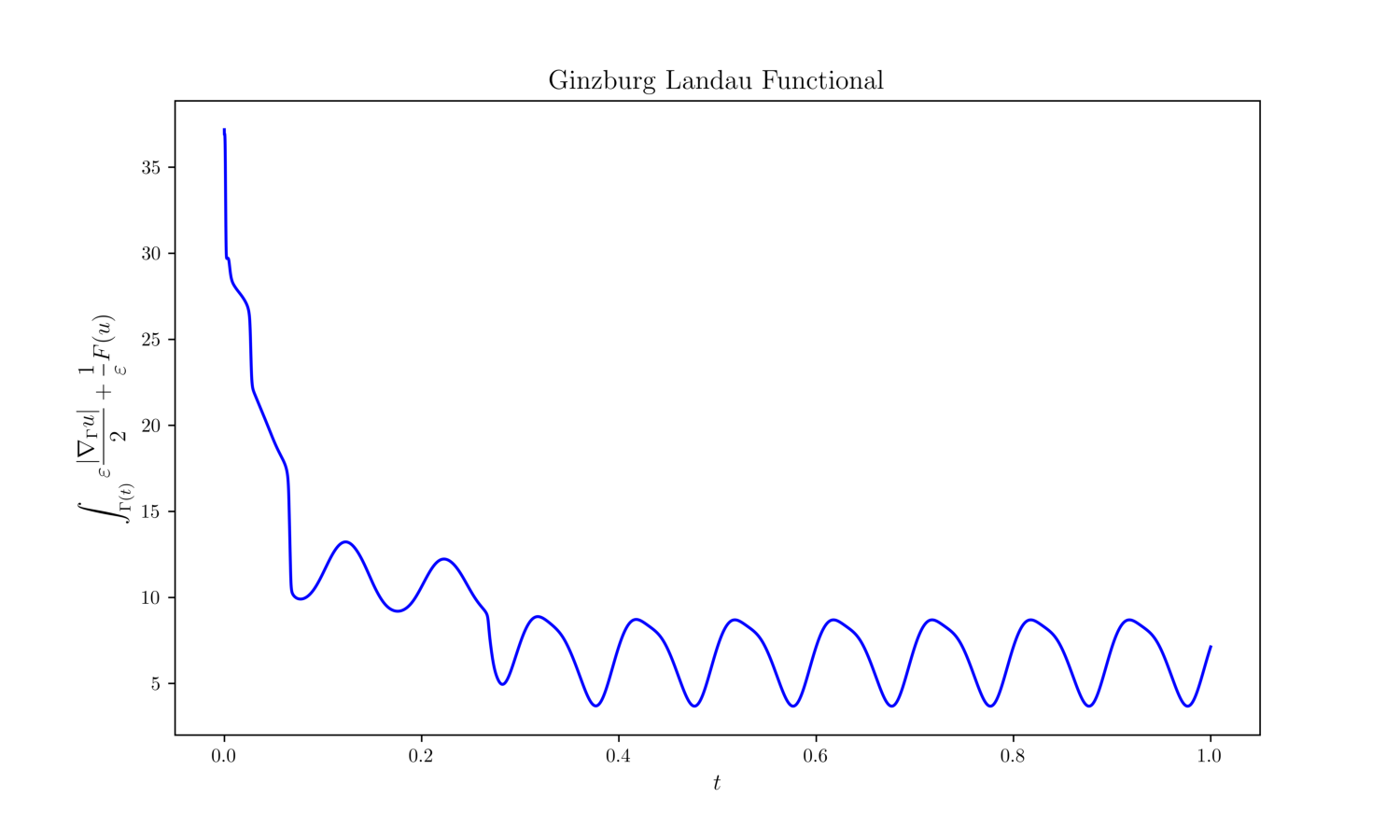}
	\caption{Plot of the Ginzburg-Landau functional (for the fully implicit scheme) on a torus with periodic evolution over $ t\in [0,1]$.}
	\label{fig:SmoothTorusGL}
\end{figure}

\begin{figure}[ht]
\centering
	\begin{subfigure}{.45\linewidth}
        \centering
		\includegraphics[width=1.2\textwidth]{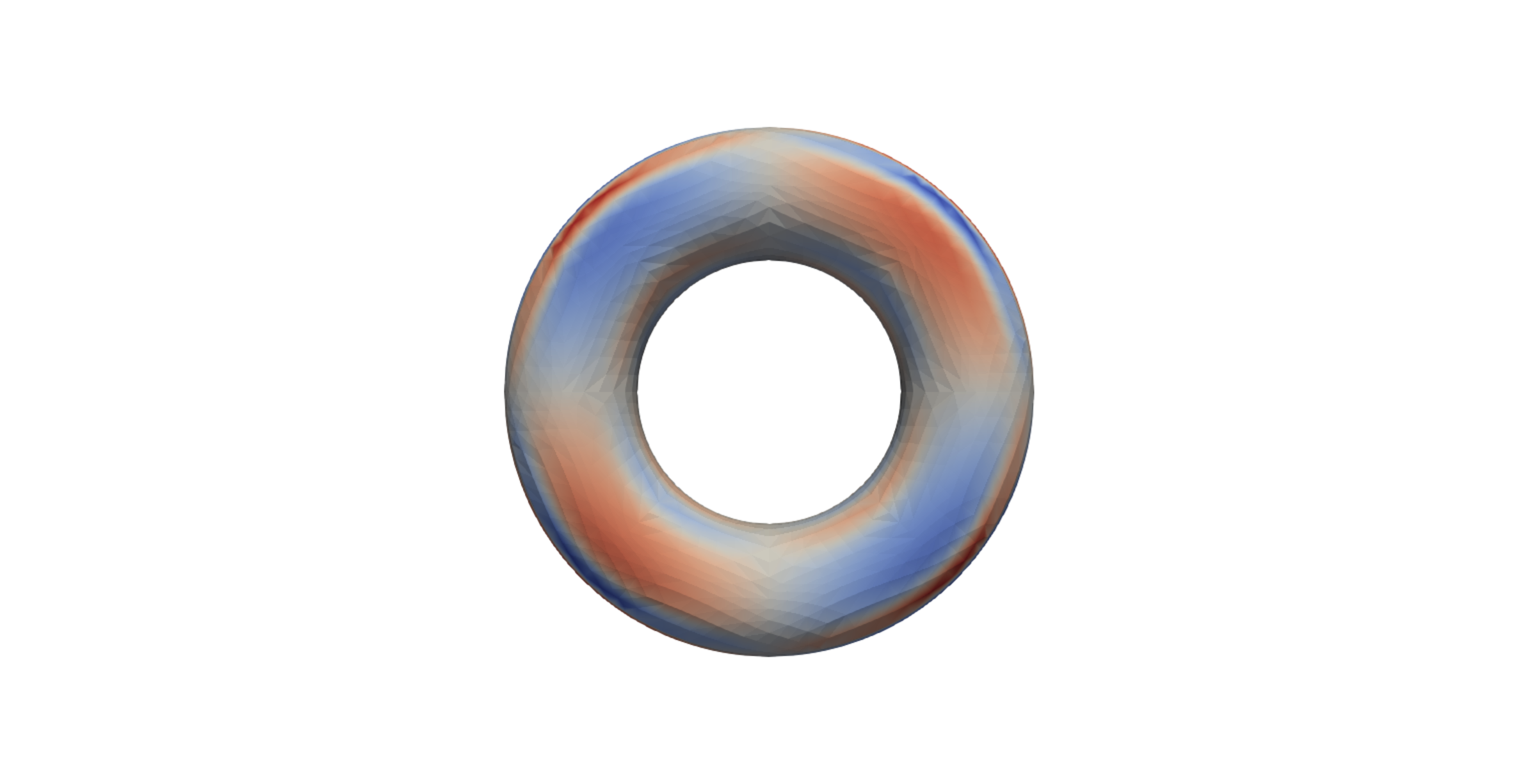}
		\caption{$t=0.$}
	\end{subfigure}%
	\begin{subfigure}{.45\linewidth}
        \centering
		\includegraphics[width=1.2\textwidth]{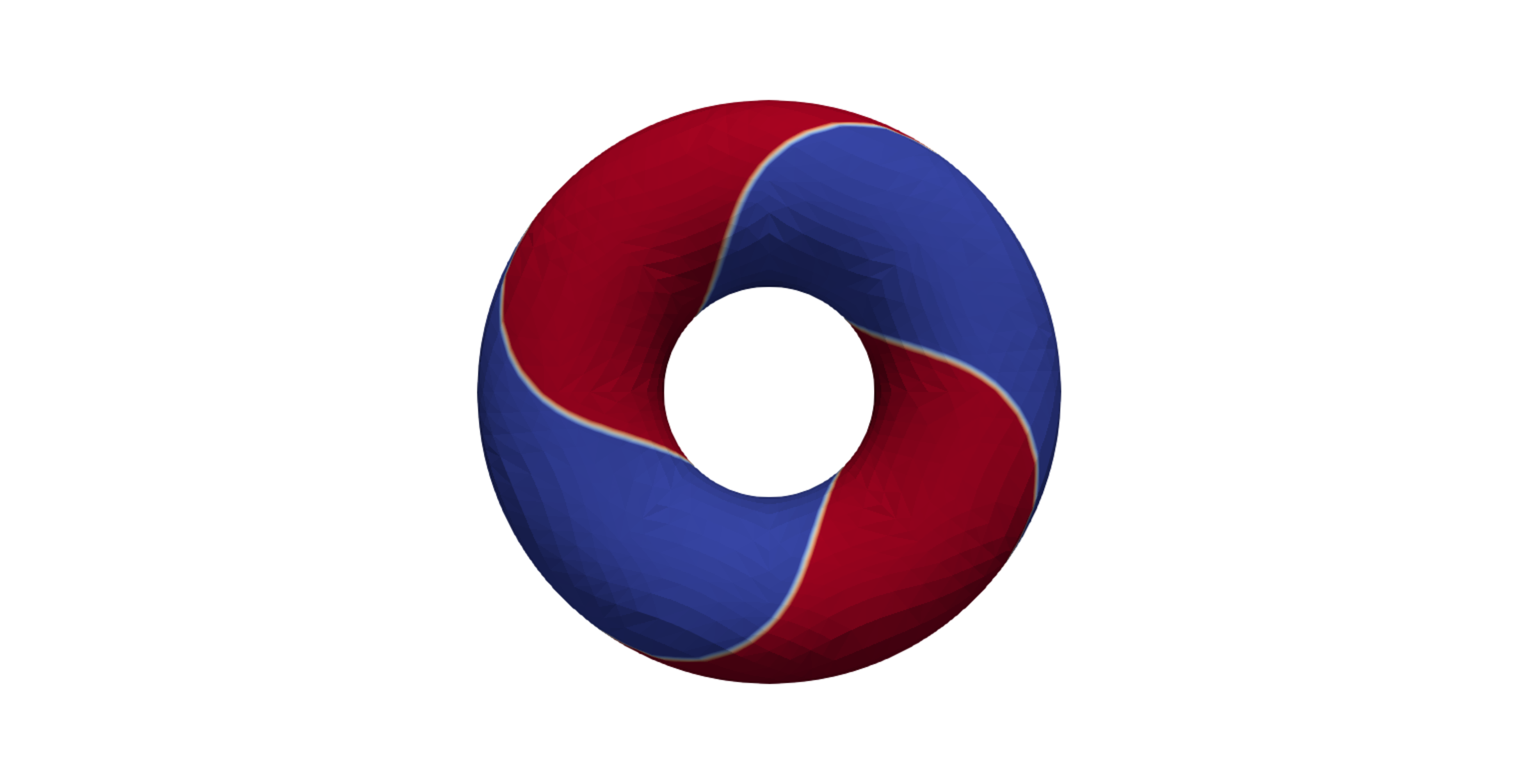}
		\caption{$t=0.125.$}
	\end{subfigure}%
	
	\begin{subfigure}{.45\linewidth}
        \centering
		\includegraphics[width=1.2\textwidth]{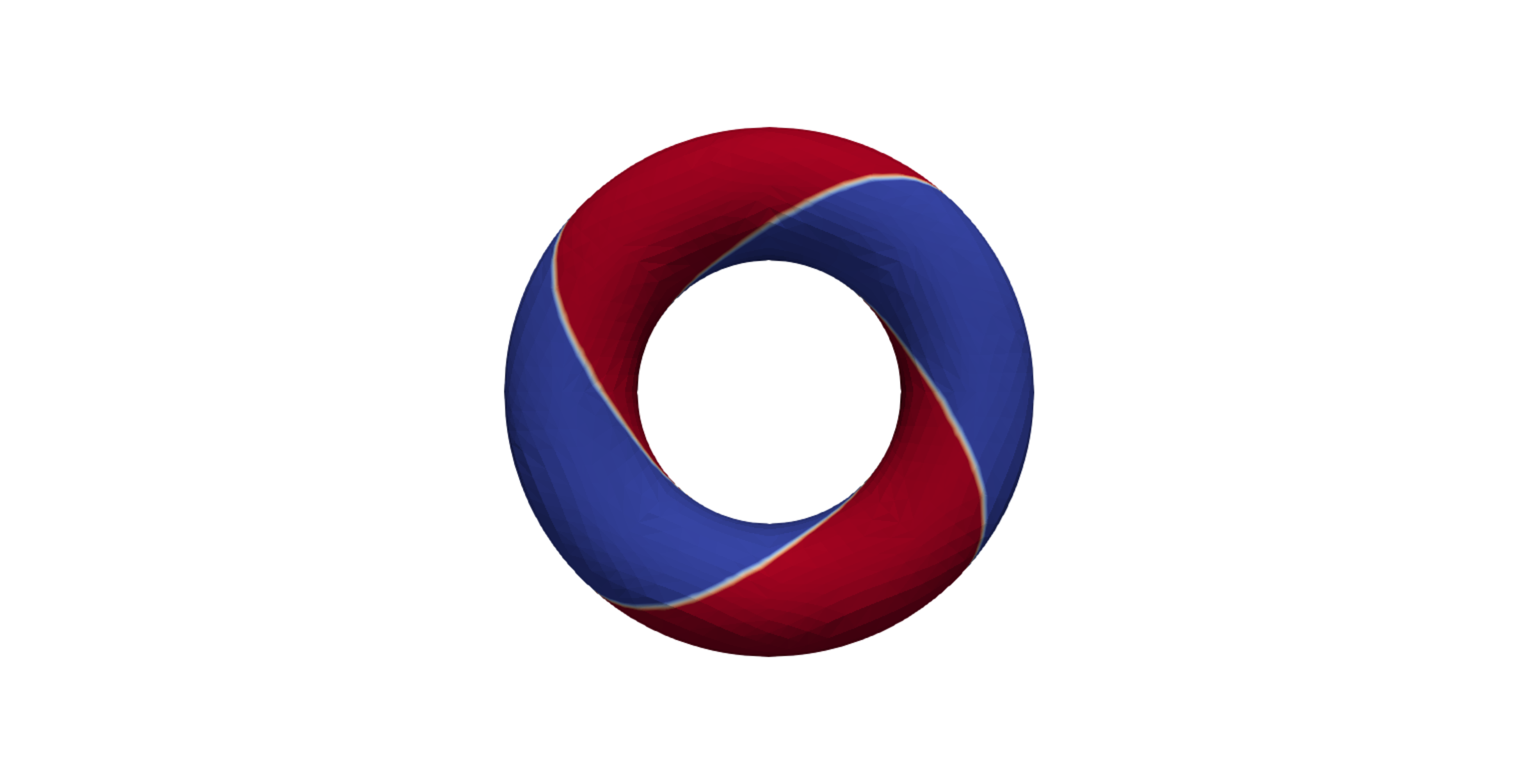}
		\caption{$t=0.25.$}
	\end{subfigure}%
	\begin{subfigure}{.45\linewidth}
        \centering
		\includegraphics[width=1.2\textwidth]{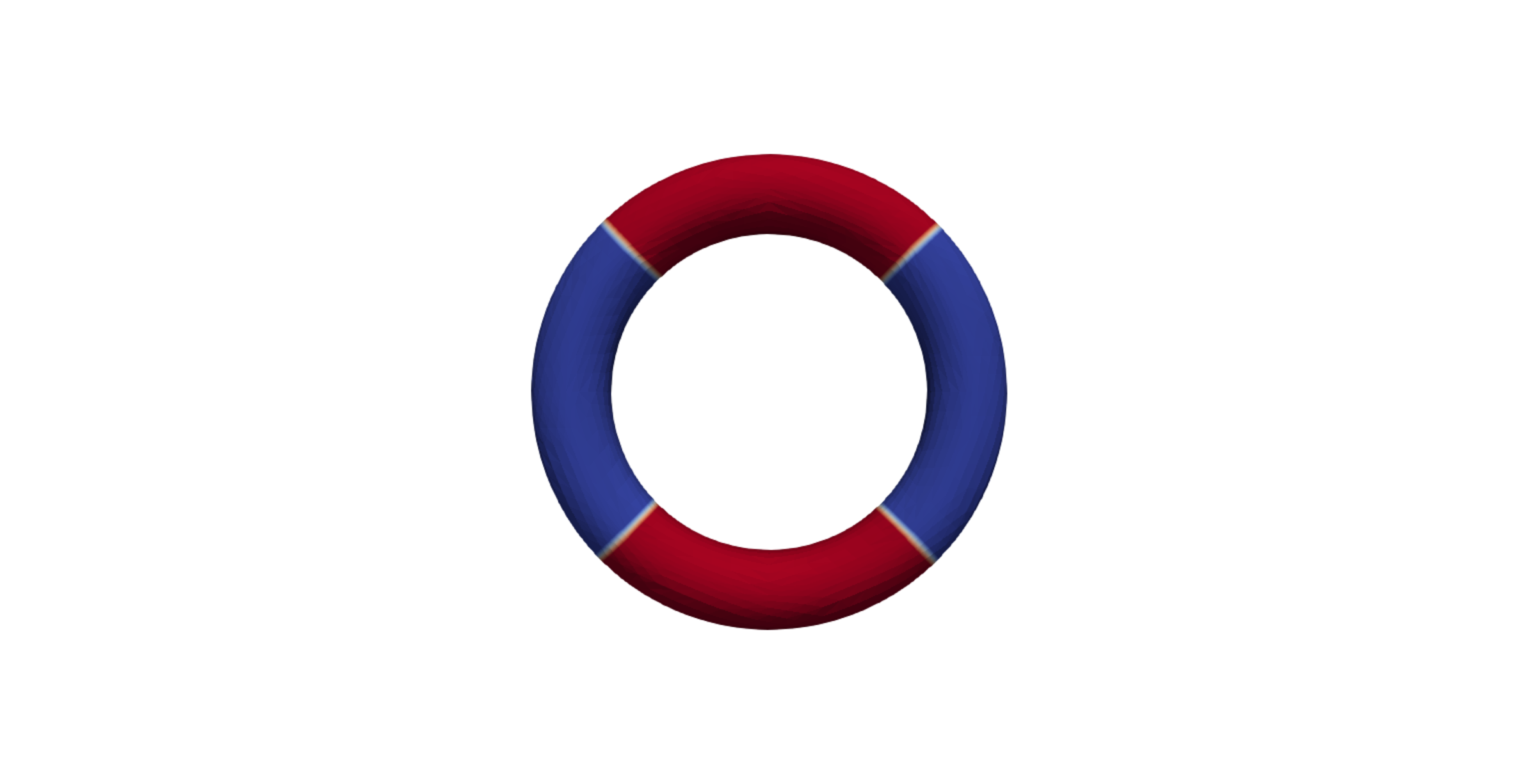}
		\caption{$t=0.375.$}
	\end{subfigure}%
	
	\begin{subfigure}{.45\linewidth}
    \centering
		\includegraphics[width=1.2\textwidth]{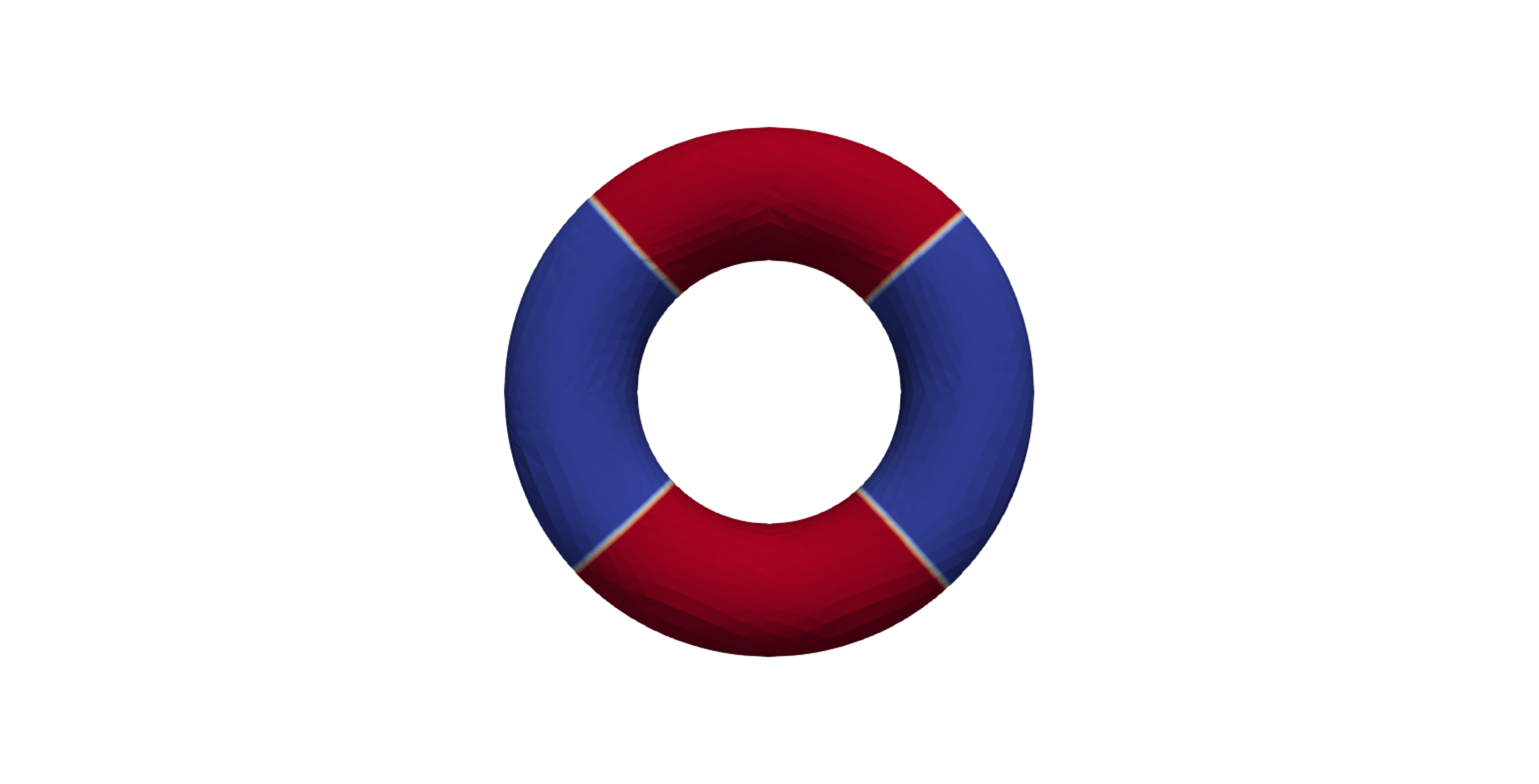}
		\caption{$t=0.5.$}
	\end{subfigure}%
	\begin{subfigure}{.45\linewidth}
    \centering
		\includegraphics[width=1.2\textwidth]{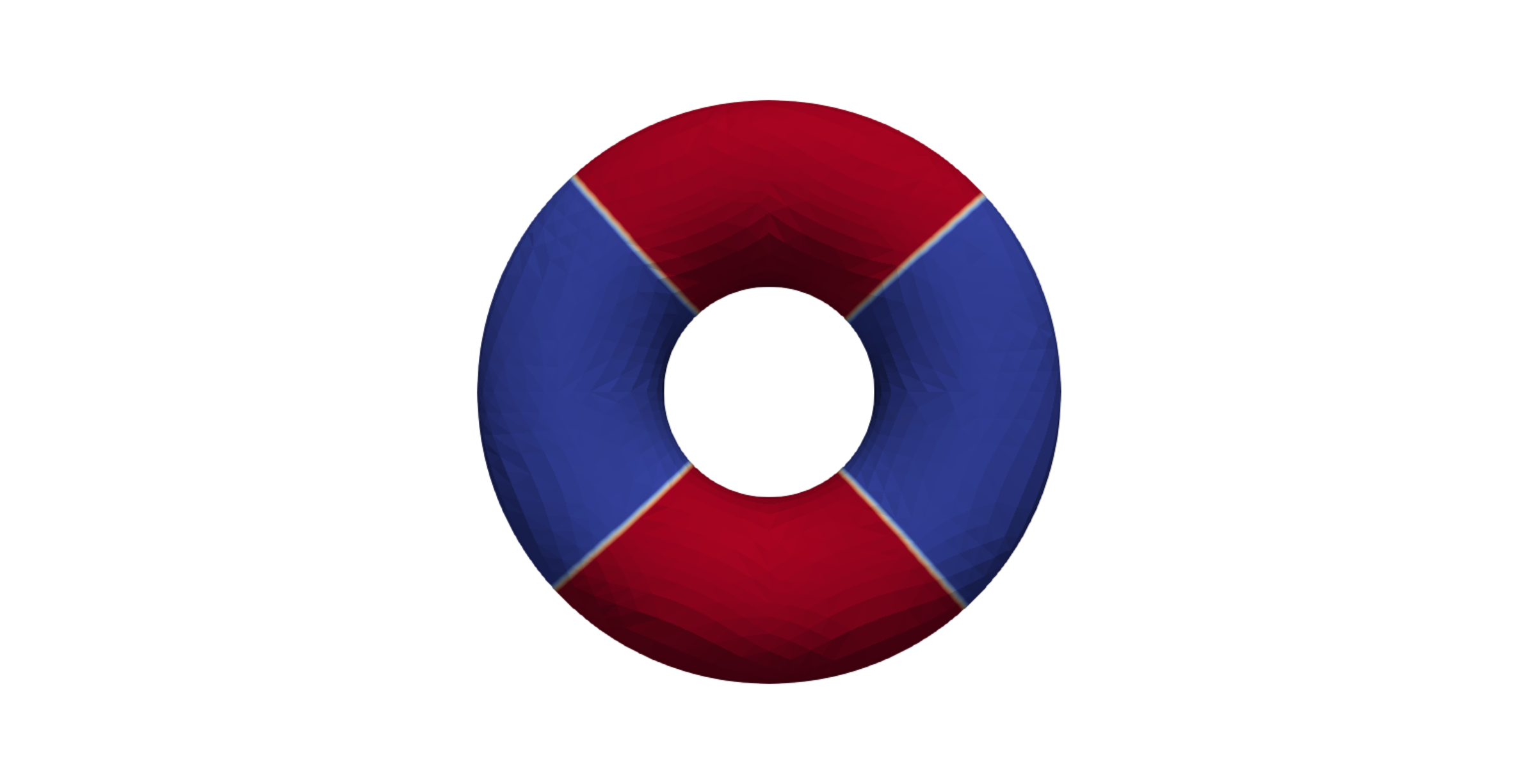}
		\caption{$t=0.625.$}
	\end{subfigure}%
	\caption{Evolution of $u$ (computed by the fully implicit scheme).
		Regions of blue correspond to a negative quantity, and red a positive quantity.
		Solution converges to an oscillatory function, as indicated by the last 3 images.}
	\label{fig:SmoothTorusEvolution}
\end{figure}

\subsection{Sharp interface limit on an evolving sphere}

In this subsection we consider the limit $\varepsilon \rightarrow 0$ on the surface given by the zero level set of the function
$$ \phi(x,y,z,t) = x^2 +y^2 + z^2 - 0.9 - 0.1 \cos(20 \pi t),$$
over a time interval $t \in [0,0.3]$.
On a stationary Euclidean domain it is known that the sharp interface limit of the Cahn-Hilliard equation yields a free boundary problem for the zero level set of the solution, $u$, known as the Mullins-Sekerka problem \cite{alikakos1994convergence,mullins1963morphological,roger2005existence}.
While this problem has been studied extensively on a stationary Euclidean domain, there are few results available for an evolving surface.
We perform some rudimentary numerical experiments to investigate the formation of a sharp interface as $\varepsilon \rightarrow 0$ for initial data given by
\[ u_0(x,y,z) = 0.5 x - 0.1,\]
on a mesh with $h \approx 3.827328 \cdot 10^{-2}$, and a timestep size $\tau = 2\cdot 10^{-5}$.
We vary the interface width $\varepsilon$ but keep these other parameters fixed.
In the limit $\varepsilon \rightarrow 0$ we observe the formation (see Figure \ref{fig:SharpInterfaceFormation}) and propagation (Figure \ref{fig:SharpInterfacePropagation}) of a sharp interface.
Moreover computing the difference of the solutions for $\varepsilon = 0.2, 0.1$ and the solution corresponding to $\varepsilon = 0.05$ we observe that halving $\varepsilon$ results in roughly half the error in the $L^2$ norm\footnote{The error in $u$ for $\varepsilon = 0.2$ was $\approx 1.11$, and the error for $\varepsilon = 0.1$ was $\approx 0.48$.
Similarly the corresponding errors for $w$ were $\approx 5.45$ for $\varepsilon = 0.2$ and $\approx 2.25$ for $\varepsilon = 0.1$.}
These numerical experiments indicate convergence to a sharp interface limit, which one would expect is given by some evolving surface analogue of the Mullins-Sekerka problem --- however the results here are quite basic and should be explored further in future works devoted to the sharp interface limit.

\begin{figure}[ht]
\centering
    \begin{subfigure}{.8\linewidth}
    \centering
		\includegraphics[width=.8\linewidth]{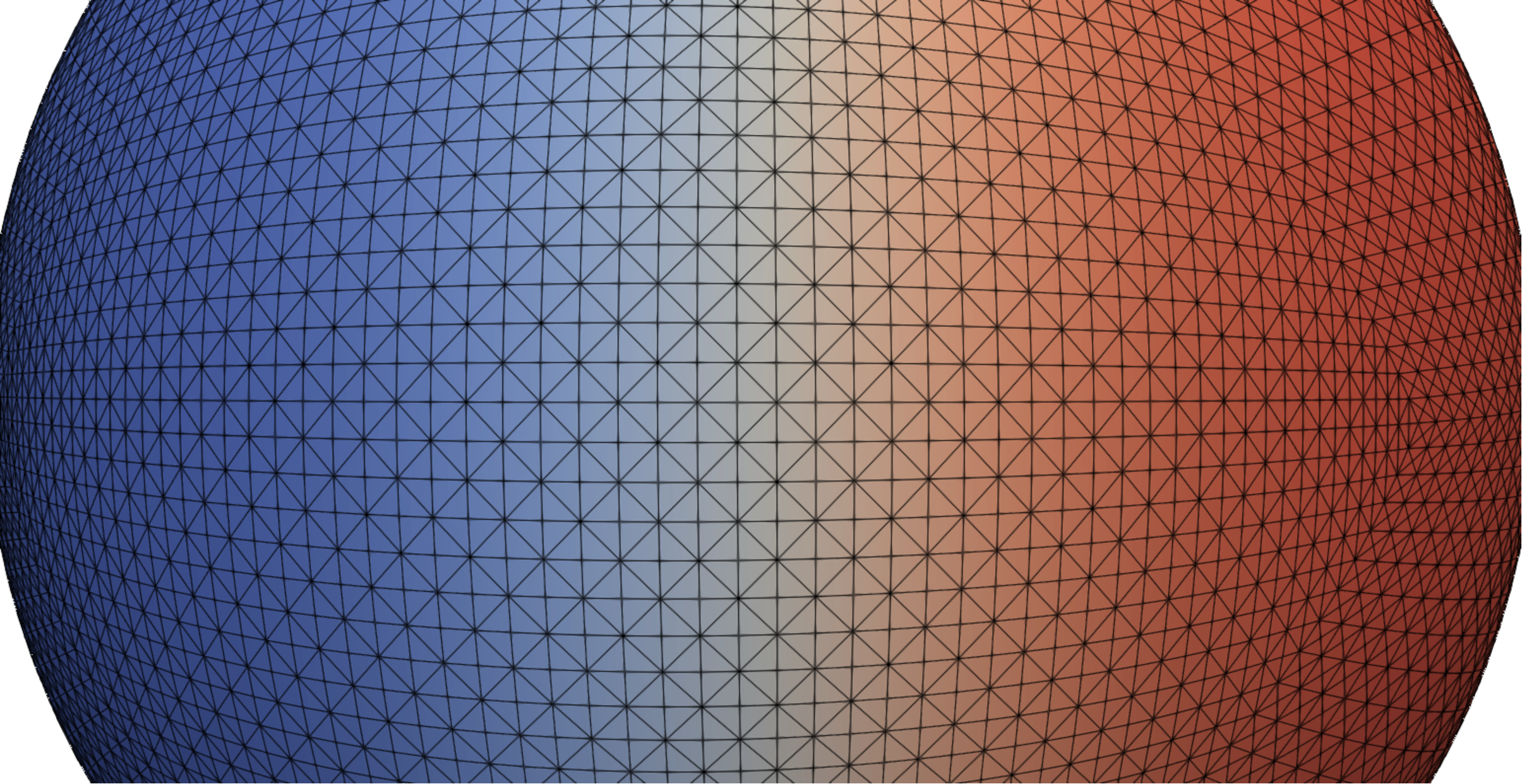}
		\caption{$\varepsilon = 0.2.$}
	\end{subfigure}%
    
	\begin{subfigure}{.8\linewidth}
    \centering
		\includegraphics[width=.8\linewidth]{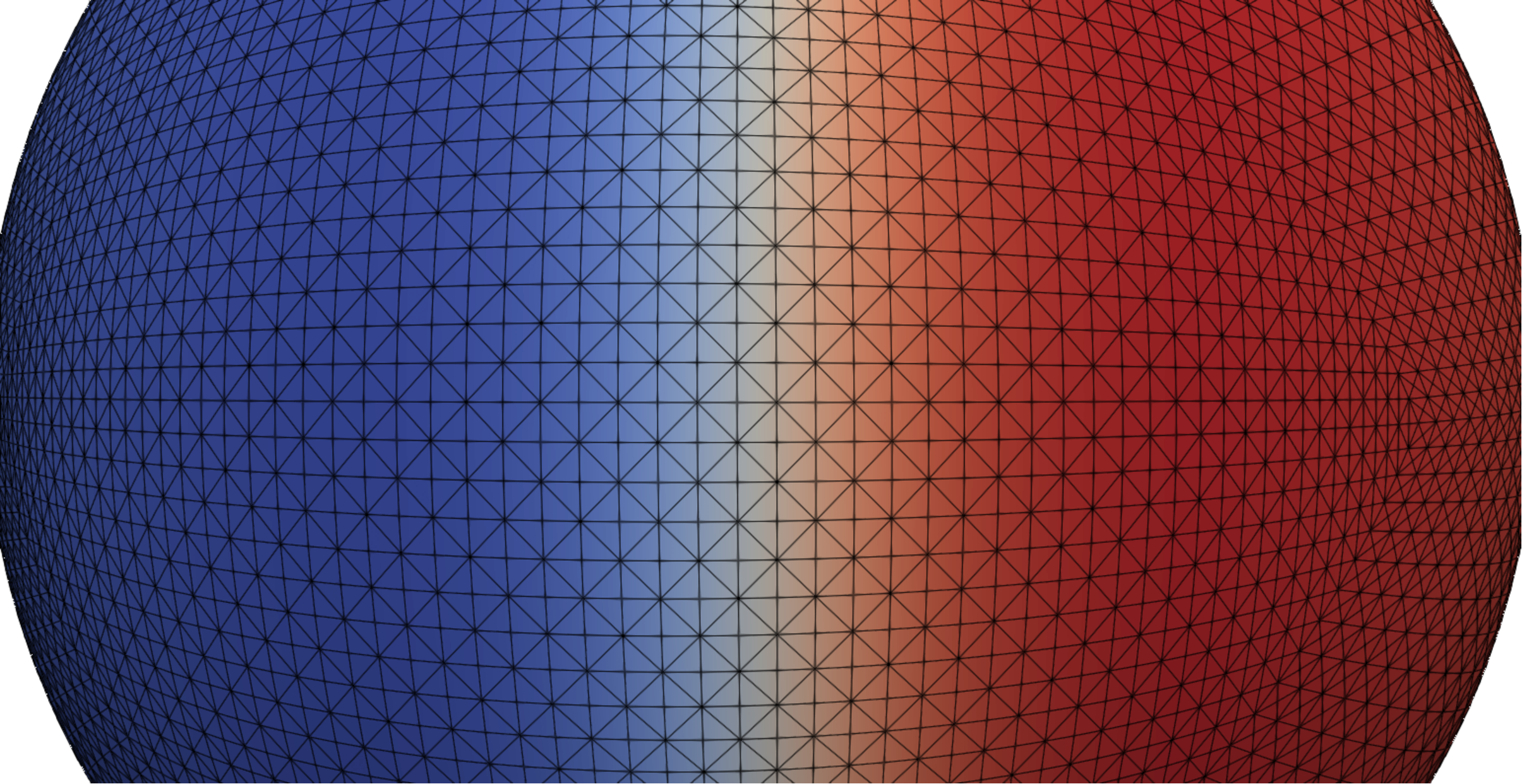}
		\caption{$\varepsilon = 0.1.$}
	\end{subfigure}%

    \begin{subfigure}{.8\linewidth}
    \centering
		\includegraphics[width=.8\linewidth]{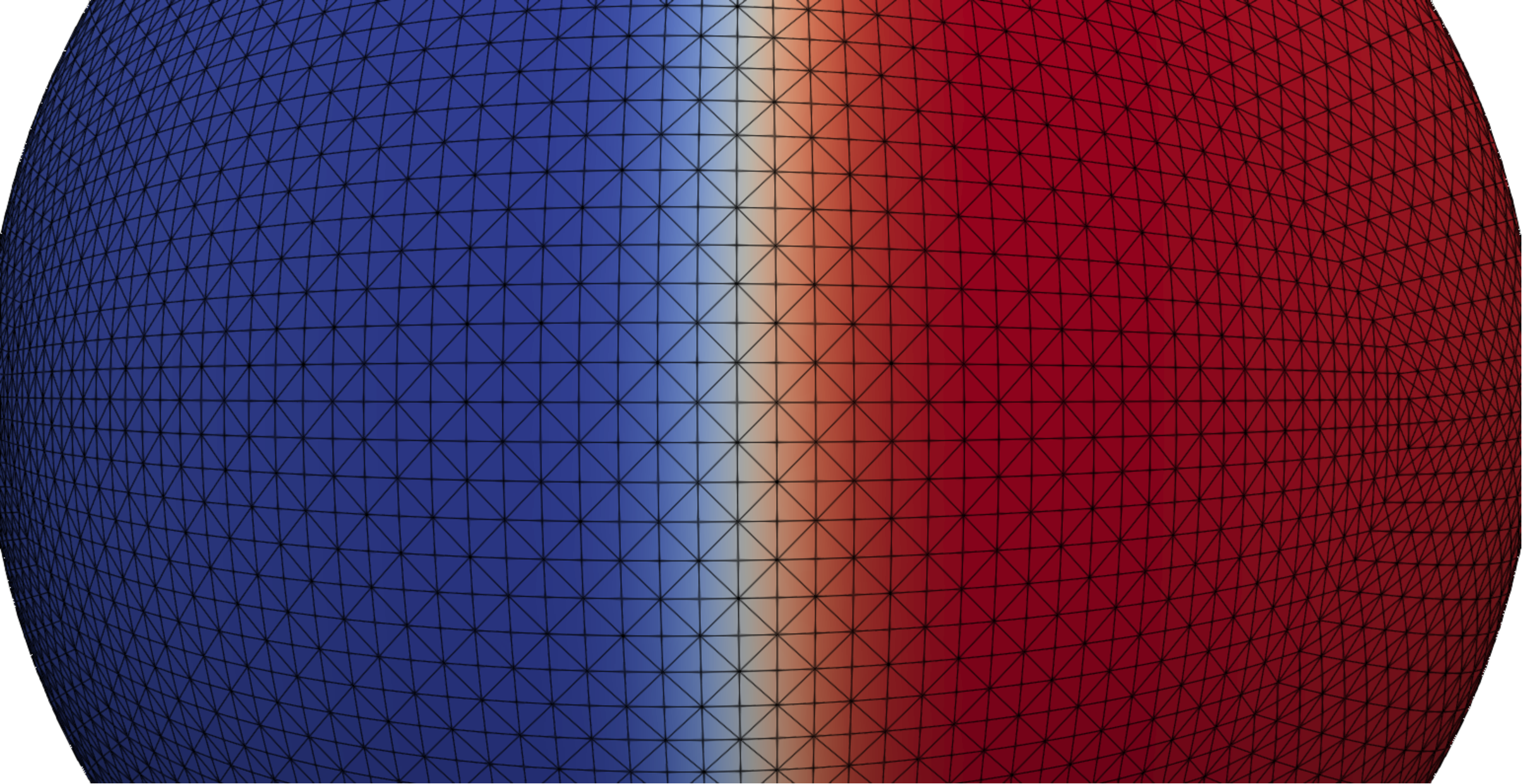}
		\caption{$\varepsilon = 0.05.$}
	\end{subfigure}%
	\caption{Convergence to a sharp interface at $t = 0.3$.}
	\label{fig:SharpInterfaceFormation}
\end{figure}

\begin{figure}[ht]
 \centering
    \begin{subfigure}{.8\linewidth}
    \centering
		\includegraphics[width=.8\linewidth]{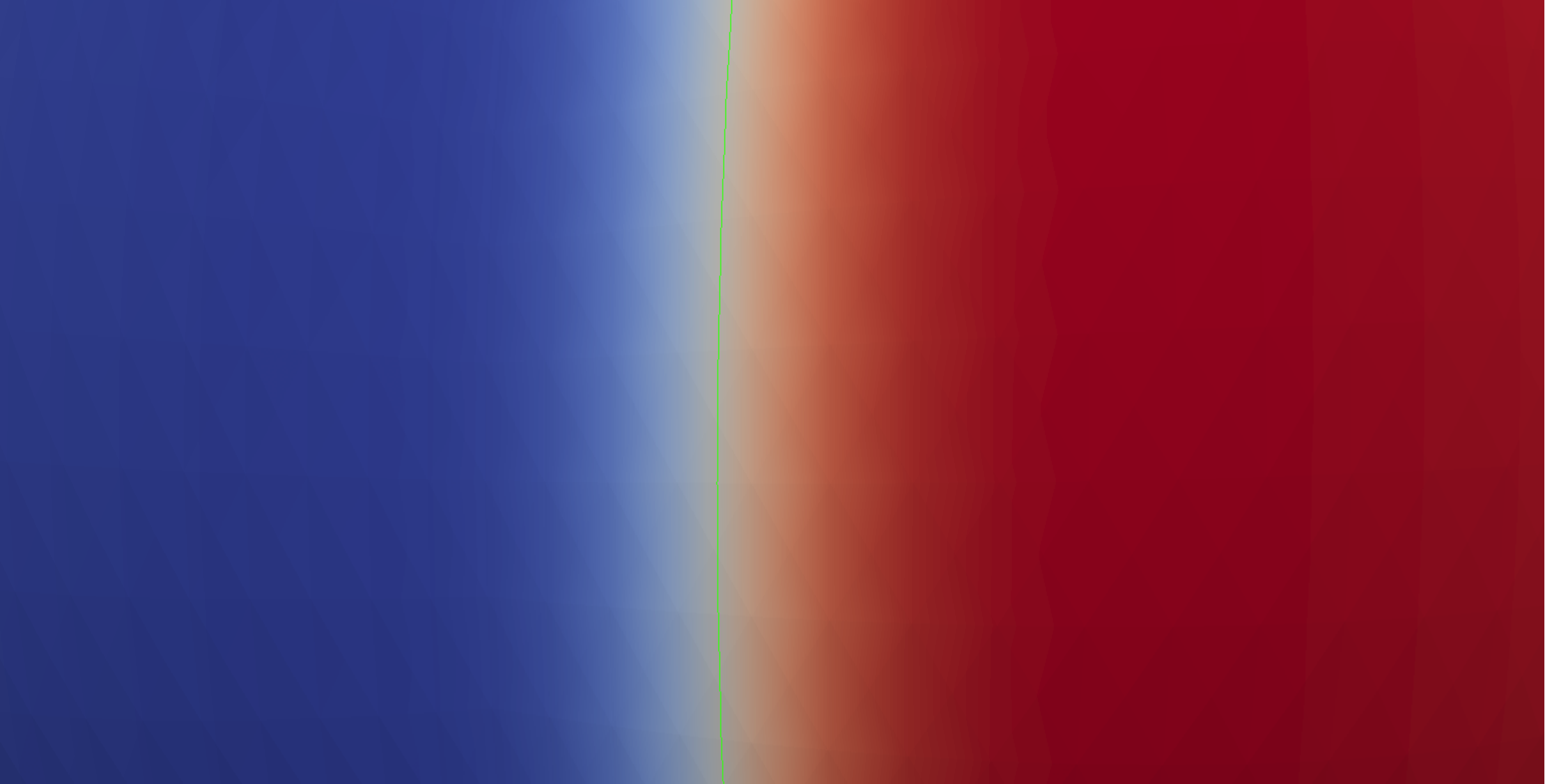}
		\caption{$t = 0.1.$}
	\end{subfigure}%
    
	\begin{subfigure}{.8\linewidth}
    \centering
		\includegraphics[width=.8\linewidth]{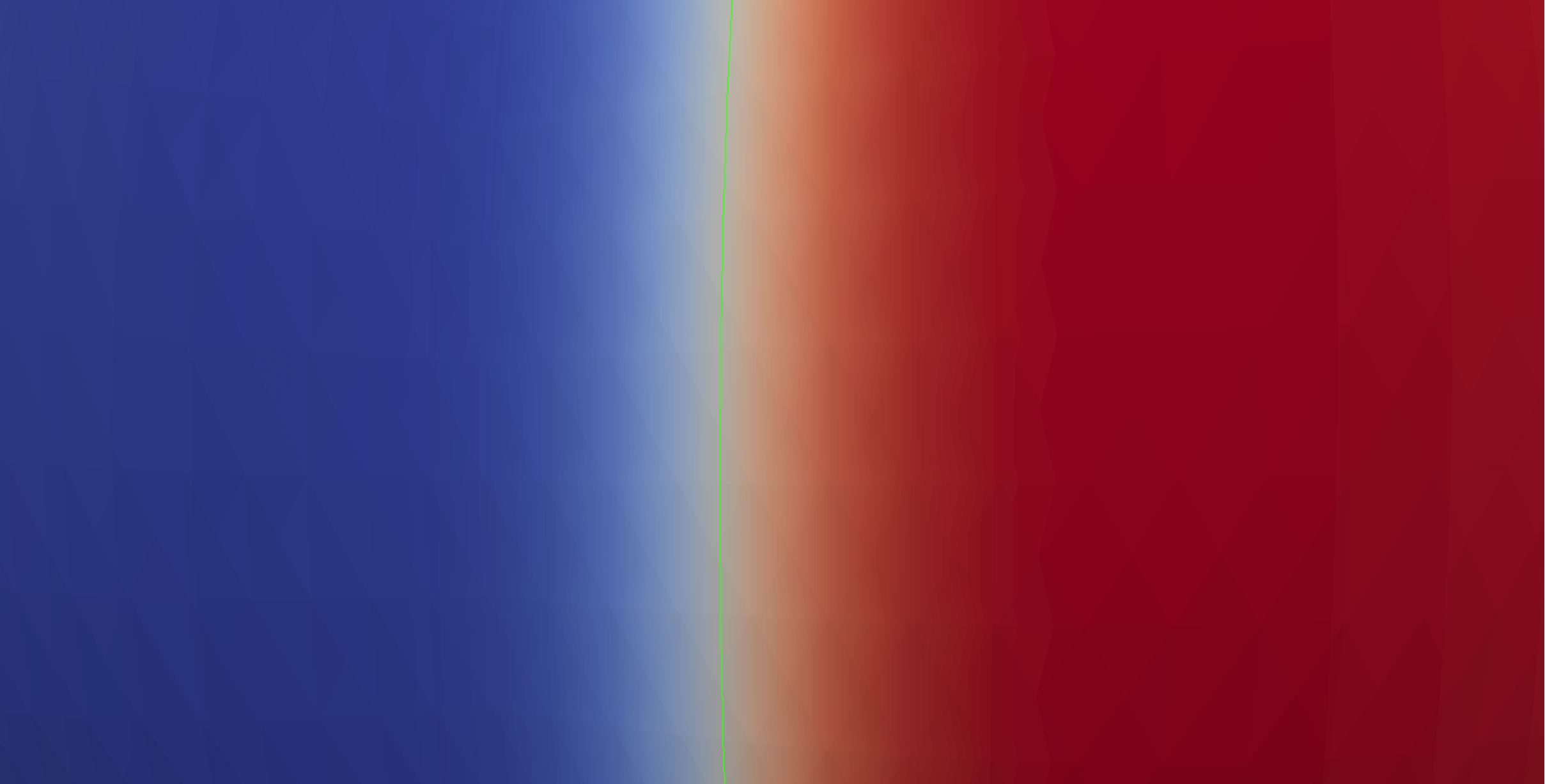}
		\caption{$t = 0.2.$}
	\end{subfigure}%

    \begin{subfigure}{.8\linewidth}
    \centering
		\includegraphics[width=.8\linewidth]{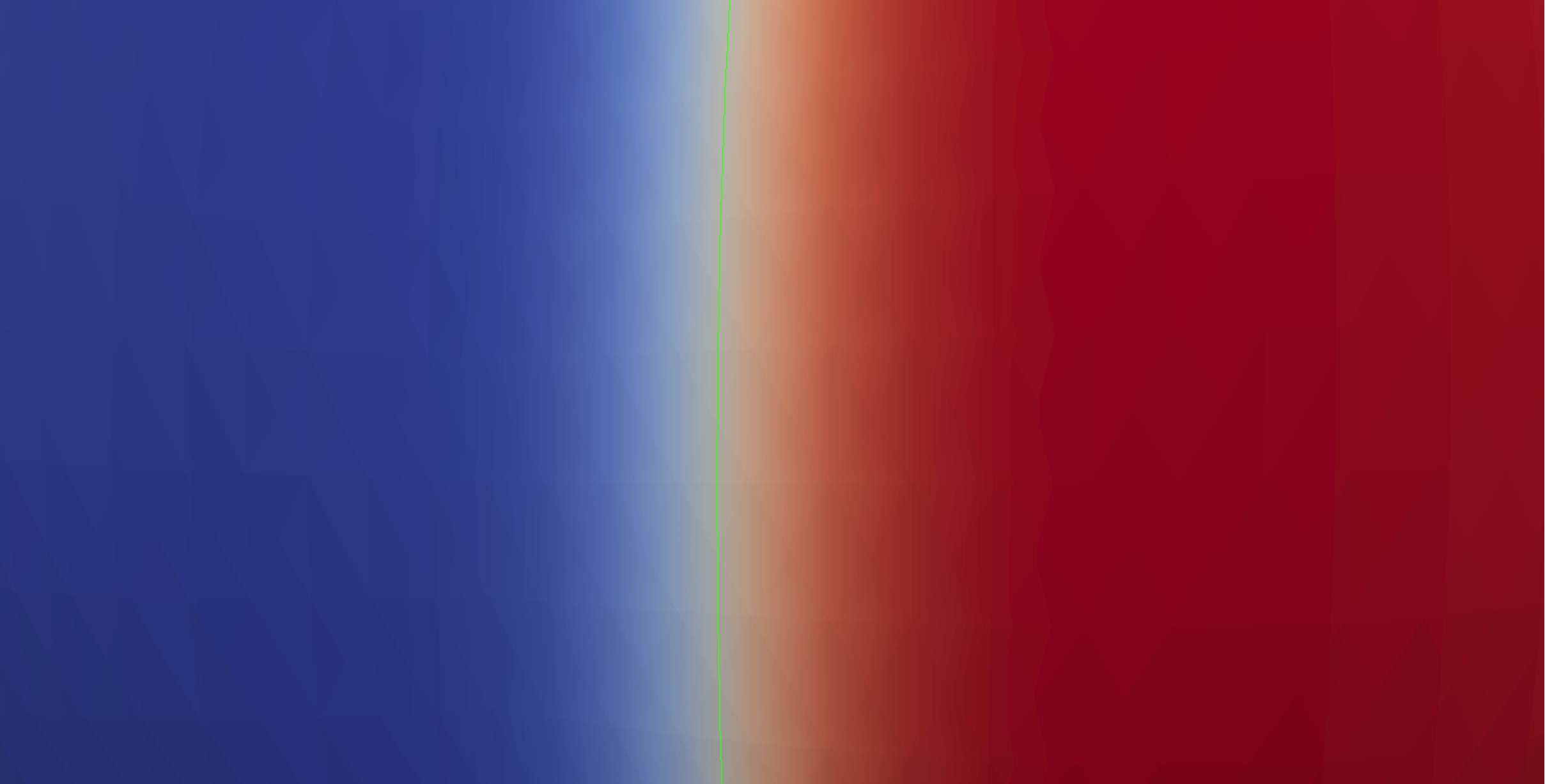}
		\caption{$t = 0.3.$}
	\end{subfigure}%
	\caption{Propagation of a sharp interface on an evolving sphere for $\varepsilon = 0.05$, computed by the fully implicit scheme.
    The centre of the interface is given in green.}
	\label{fig:SharpInterfacePropagation}
\end{figure}

\subsection*{\textbf{Acknowledgments}}
The authors would like to thank Andreas Dedner for helpful discussions regarding computing the experimental order of convergence.
Thomas Sales is supported by the Warwick Mathematics Institute Centre for Doctoral Training, and gratefully acknowledges funding from the University of Warwick and the UK Engineering and Physical Sciences Research Council (Grant number:EP/TS1794X/1).
For the purpose of open access, the author has applied a Creative Commons Attribution (CC BY) licence to any Author Accepted Manuscript version arising from this submission.

\bibliographystyle{acm}
\bibliography{bibliography.bib}

\end{document}